\space \printtext{no.~\printfield{number}}}%
\theoremstyle{definition}
\newtheorem{theorem}{Theorem}[section]
\newtheorem{lemma}[theorem]{Lemma}
\newtheorem{remark}[theorem]{Remark}
\newtheorem{corollary}[theorem]{Corollary}
\newtheorem{definition}[theorem]{Definition}
\newtheorem{proposition}[theorem]{Proposition}
\title{Morse index of min--max stationary integral varifolds}
\author{Mitchell Gaudet, Talant Talipov}
\date{}
\pgfplotsset{compat=1.18}
\begin{document}
\thispagestyle{empty}
\maketitle

\begin{abstract}
    We prove an upper bound for the Morse index of min--max stationary integral varifolds realizing the $d$-dimensional $p$-width of a closed Riemannian manifold.
\end{abstract}

\section{Introduction}

Let $(M^{n},g)$ be a closed Riemannian manifold. The \textit{$d$-dimensional volume spectrum} is the sequence of invariants $\{\omega_p^d(M,g)\}_{p\in\mathbb N}$ introduced by Gromov \cite{Gro06,Gro02,Gro10}. These \textit{$d$-dimensional $p$-widths} serve as a nonlinear analogue of the Laplace spectrum and have driven major progress in the theory of minimal hypersurfaces (see \cite{A1,rigidity,chodosh2020minimal,Dey,gaspar2020index,gaspar2018allen,gaspar2019weyl,irie2018density,yangyangli2, yangyangli1,LMN,marques2017existence,MN1,MarquesNeves,marques2021morse,Pi2,zhou2020multiplicity}). In particular, Yau’s conjecture on the existence of infinitely many closed embedded minimal hypersurfaces was resolved for closed Riemannian manifolds $(M^{n},g)$ with $3\le n \le 7$ \cite{song2018existence}.

When $3\le n\le 7$, each $(n-1)$-dimensional $p$-width equals the weighted area of a smooth, closed, embedded \textit{min--max} minimal hypersurface: there exist pairwise disjoint smooth closed embedded minimal hypersurfaces $\{\Sigma_{p,j}\}_{j=1}^{N(p)}$ and positive integers $\{m_{p,j}\}_{j=1}^{N(p)}$ such that
\begin{equation}\label{eqn:rep_intro}
    \omega_p^{n-1}(M,g)=\sum_{j=1}^{N(p)} m_{p,j}\,\mathrm{Area}_g(\Sigma_{p,j}).
\end{equation}
Moreover, one has the index bound \cite{MarquesNeves}
\begin{equation*}
    \sum_{j=1}^{N(p)} \mathrm{index}(\Sigma_{p,j}) \le p,
\end{equation*}
where $\mathrm{index}(\Sigma)$ denotes the Morse index of $\Sigma$. Higher-dimensional index bounds for minimal surfaces with optimal regularity can be found in \cite{yangyangli}. Although multiplicities greater than one may occur in \eqref{eqn:rep_intro} \cite{WangZhou_mult2}, the multiplicity one conjecture, resolved in \cite{chodosh2020minimal,zhou2020multiplicity}, shows that for a generic metric the hypersurfaces are two-sided and all weights are $1$.
 In that setting one obtains the stronger weighted index bound
\begin{equation*}
    \sum_{\Sigma_{p,j}\ \mathrm{2\!-\!sided}} m_{p,j}\,\mathrm{index}(\Sigma_{p,j})
    \;+\;
    \sum_{\Sigma_{p,j}\ \mathrm{1\!-\!sided}} \frac{m_{p,j}}{2}\,\mathrm{index}(\Sigma_{p,j})
    \;\le\; p .
\end{equation*}

\begin{figure}
    \centering
    \includegraphics[width=4.5in,trim=0 0 0 0,clip]{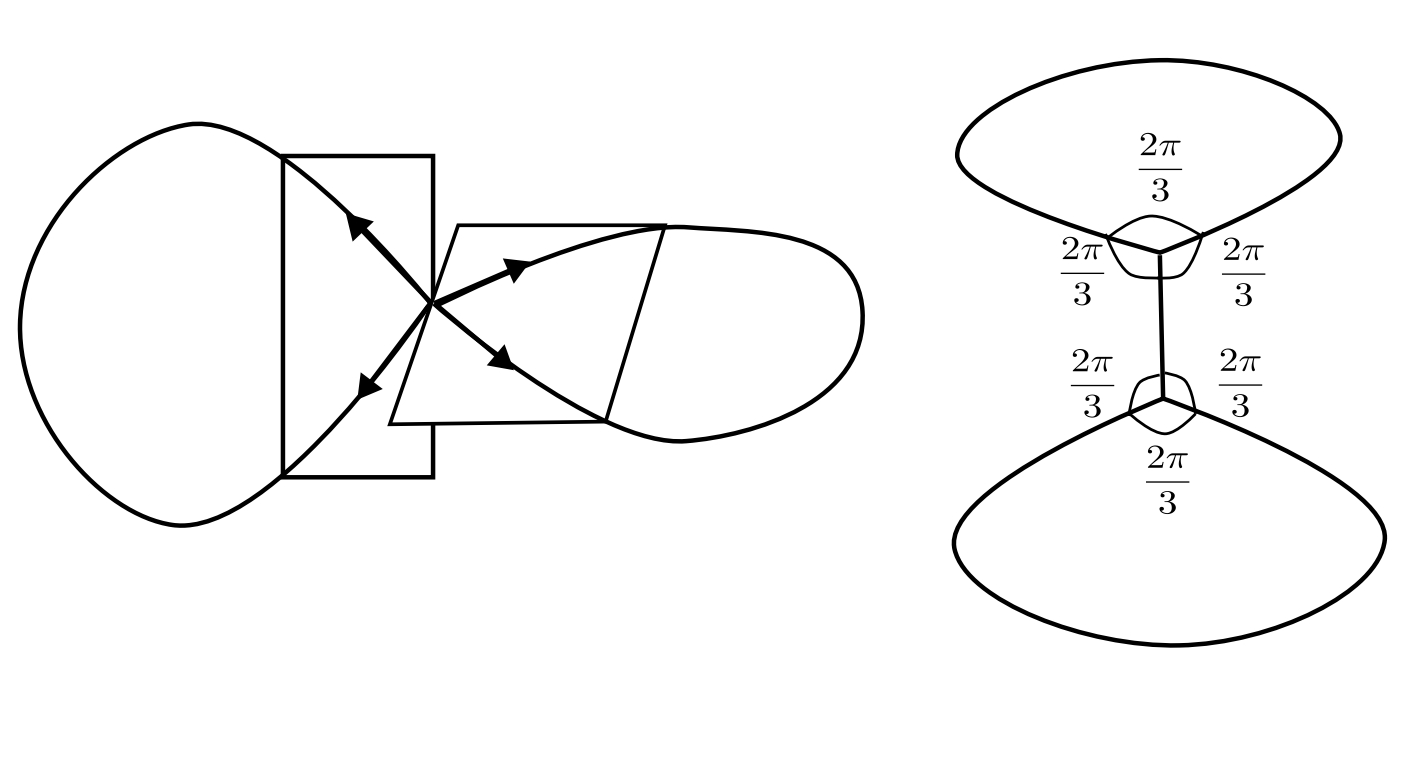}
    \caption{Stationary twisted figure-eight (left) and stationary eyeglass (right)}
    \label{drawing_1}
\end{figure}

Almgren's theory implies that widths are realized by stationary integral varifolds that are almost minimizing in annuli~\cite{A1, A2, MarquesNevesTop}.

For $d = 1$, the relevant min--max objects are \textit{stationary geodesic nets}. A geodesic net is a finite weighted graph immersed in $(M^n,g)$ whose edges are geodesic segments. A geodesic net is \textit{stationary} if it is critical for the length functional, equivalently if the sum of inward unit tangents (with multiplicity) vanishes at every vertex. Any one-dimensional stationary integral varifold is a stationary geodesic net \cite{AllardAlmgren,A2,CaCa,NR,Pi2,Pi1}. Simple examples of stationary geodesic nets in $M^n$ for $n \geq 3$ are illustrated in Figure~\ref{drawing_1}. A stationary twisted figure-eight consists of one vertex and two geodesic loops based at that point. A stationary eyeglass consists of two vertices connected by a geodesic edge, with a geodesic loop attached at each vertex. We refer to \cite{CLNR23, Cheng, HassMorgan,LiStaffa,LiokumovichStaffa,NabutovskyParsch,NabutovskyRotman, Rotman, Staffa_Weyl, Talipov} for developments in the theory of geodesic nets. On closed surfaces, Chodosh–Mantoulidis used the Allen–Cahn min--max framework (with the sine–Gordon potential) to show that the 1-dimensional $p$-widths are realized by unions of closed immersed geodesics:
\begin{theorem}[\cite{chodosh2023p}]
Let $(M^2,g)$ be a closed Riemannian surface. For every $p\in\mathbb N$ there exist closed immersed geodesics $\{\sigma_{p,j}\}_{j=1}^{N(p)}$ and integers $m_{p,j}\ge1$ such that
\[
\omega^1_p(M,g)=\sum_{j=1}^{N(p)} m_{p,j}\,\mathrm{length}_g(\sigma_{p,j}).
\]
\end{theorem}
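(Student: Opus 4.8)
The plan is to run the Allen--Cahn min--max of Guaraco and Gaspar--Guaraco, but with the $2\pi$--periodic sine--Gordon potential $W$ (so that $W\ge 0$ vanishes exactly on $2\pi\mathbb Z$), and then to send the regularization parameter $\varepsilon\to 0$. For each $p$ one first defines the $p$--parameter min--max value $c_\varepsilon(p)$ of the energy $E_\varepsilon(u)=\int_M\big(\tfrac{\varepsilon}{2}|\nabla u|^2+\varepsilon^{-1}W(u)\big)\,d\mathrm{vol}_g$ over families of $W^{1,2}(M)$ functions detected by the top cup power of the generator of the cohomology of the (projective--space) parameter space; the min--max theorem for $E_\varepsilon$ then produces critical points $u_\varepsilon$ with $E_\varepsilon(u_\varepsilon)=c_\varepsilon(p)$ and Morse index at most $p$. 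The second ingredient is the width comparison $\lim_{\varepsilon\to0}c_\varepsilon(p)/(2\sigma)=\omega^1_p(M,g)$, where $2\sigma$ is the energy of a single sine--Gordon kink: the upper bound is obtained by converting a near--optimal $p$--sweepout of $M$ by mod--$2$ relative cycles into a sweepout by one--dimensional phase--transition profiles --- here periodicity of $W$ is exactly what lets the recovery sequences carry arbitrary integer multiplicities at bounded energy cost --- while the lower bound follows from the Modica--Mortola inequality together with the equivalence of the discrete and varifold $1$--widths.

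With the uniform bound $c_\varepsilon(p)\le C(p)$ secured, I would pass to the limit using phase--transition regularity. By Hutchinson--Tonegawa and the improvements of Tonegawa--Wickramasekera, a subsequence of the rescaled energy measures $(2\sigma)^{-1}\big(\tfrac{\varepsilon}{2}|\nabla u_\varepsilon|^2+\varepsilon^{-1}W(u_\varepsilon)\big)\,d\mathrm{vol}_g$ converges to $\|V\|$ for an integral $1$--varifold $V$ on $M$ which is stationary, and the uniform index bound $\mathrm{index}(u_\varepsilon)\le p$ forces $V$ to have bounded Morse index --- in particular $V$ is stable outside a set of at most $p$ points. By the structure theory for one--dimensional stationary integral varifolds recalled above, $V$ is then a stationary geodesic net: $\mathrm{spt}\,\|V\|$ is a finite union of geodesic arcs carrying a locally constant integer multiplicity and balanced at each of its vertices.

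The crux is to promote ``geodesic net'' to ``union of closed immersed geodesics,'' i.e.\ to exclude genuine junctions, and this is the step I expect to be the main obstacle. The key observation is that in the region of convergence $\mathrm{spt}\,\|V\|$ is the Hausdorff limit of the nodal sets $\{u_\varepsilon=0\}$, which are level sets of globally defined real--valued functions; a small circle about any point meets such a level set in an even number of points, so no vertex of odd degree can survive in the limit, and in particular no triod. At the finitely many points where stability is lost --- and in order also to rule out ``bent'' even--degree vertices --- I would blow $u_\varepsilon$ up: rescaling about such a point yields, along a further subsequence, an entire finite--Morse--index solution of the sine--Gordon Allen--Cahn equation on $\mathbb R^2$, which by the classification of such solutions has nodal set a finite union of lines through the origin (up to a rigid motion it is a stack of parallel one--dimensional kinks, or a transverse superposition of such stacks), so the geodesics of $V$ pass through the point without forming a junction. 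Away from the bad points, the Wang--Wei curvature estimates for stable solutions give local smooth convergence of the nodal sets to a geodesic with integer multiplicity. Patching these local descriptions produces closed immersed geodesics $\sigma_{p,j}$ and integers $m_{p,j}\ge 1$ with $V=\sum_{j=1}^{N(p)}m_{p,j}\,|\sigma_{p,j}|$, whence $\omega^1_p(M,g)=\sum_{j=1}^{N(p)}m_{p,j}\,\mathrm{length}_g(\sigma_{p,j})$; the periodic potential is used throughout precisely because it makes both the multiplicity bookkeeping in the recovery sequences and the planar classification behind this last step clean.
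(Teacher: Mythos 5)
This theorem is imported from Chodosh--Mantoulidis \cite{chodosh2023p}; the paper offers no proof beyond the citation, and your outline is exactly the strategy of that reference (sine--Gordon Allen--Cahn min--max, the width comparison $c_\varepsilon(p)/2\sigma\to\omega_p^1$, the Hutchinson--Tonegawa varifold limit, and the exclusion of junctions). The one caveat: the crux step is not an off-the-shelf ``classification of entire finite-Morse-index sine--Gordon solutions on $\mathbb{R}^2$ with nodal set a union of lines'' --- no such classification is available --- and the cited proof instead rules out junctions via curvature estimates for stable solutions together with a bespoke analysis near the finitely many unstable points, so your sketch correctly locates the main difficulty but replaces it with an unavailable black box.
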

On surfaces, one also has Morse index control for closed geodesics, see \cite{lorenzo}, which proves that
\[
\sum_{j=1}^{N(p)} \mathrm{index}(\sigma_{p,j}) \le p
\quad\text{and}\quad
\sum_{v\in \mathrm{Vert}(\{\sigma_{p,j}\}_j)} \binom{\mathrm{ord}(v)}{2}\le p .
\]

The main result of this paper is the following.

\begin{theorem}\label{thm:intro-main}
Let $(M^{n},g)$ be a closed Riemannian manifold of dimension $n \geq 2$. For every $p\in\mathbb N$ and $d\in\{1,\ldots,n-1\}$ there exists $S \in \mathcal{IV}_d(M)$ that is stationary and almost minimizing in annuli such that
\[
\omega_p^d(M,g)=\lVert S \rVert(M)
\quad\text{and}\quad
\mathrm{index}(S)\le p(n-d),
\]
where we interpret $\mathrm{index}(S)\le p(n-d)$ as saying $S$ is not $\bigl(p(n-d)+1\bigr)$-unstable (see Definition~\ref{unstable_def} below).
\end{theorem}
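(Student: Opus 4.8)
The plan is to adapt the min--max deformation argument of Marques--Neves \cite{MarquesNeves}, with the codimension $n-d$ entering through the degree of the relevant cohomology class on the cycle space. Recall that $\omega_p^d(M,g)$ can be computed as the min--max value $\inf_\Phi\sup_x\mathbf M(\Phi(x))$ over $p$-sweepouts $\Phi\colon X\to\mathcal Z_d(M^n;\mathbb Z_2;\mathbf M)$ from finite complexes with $\Phi^*(\bar\lambda^{\,p})\ne 0$, where $\bar\lambda$ is the fundamental class of degree $n-d$ on $\mathcal Z_d(M^n;\mathbb Z_2)$ (for $d=n-1$ this is the degree-one generator of $H^1(\mathcal Z_{n-1}(M;\mathbb Z_2);\mathbb Z_2)\cong\mathbb Z_2$, recovering the usual $p$-widths). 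Since $\bar\lambda^{\,p}$ lies in $H^{p(n-d)}$, a $p$-sweepout may be taken with domain of dimension exactly $p(n-d)$. On the existence side, the Almgren--Pitts construction with the pull-tight process and the almost-minimizing theory \cite{A1,A2,MarquesNevesTop} produces $S$ realizing $\omega_p^d(M,g)$ that is stationary and almost minimizing in annuli; it remains to arrange that $S$ is not $\bigl(p(n-d)+1\bigr)$-unstable.

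Suppose for contradiction that every varifold $V$ in the min--max critical set $\mathbf C(\Pi)$ of a nearly optimal admissible family $\Pi$ (the set of varifolds arising as min--max limits at the width) is $\bigl(p(n-d)+1\bigr)$-unstable in the sense of Definition~\ref{unstable_def}. The set $\mathbf C(\Pi)$ is compact in the $\mathbf F$-metric, and $m$-instability is an open condition since the defining deformation comes with an $\mathbf F$-neighborhood on which it works; hence $\mathbf C(\Pi)$ is covered by finitely many $\mathcal U(V_1),\dots,\mathcal U(V_L)$, with $\mathcal U(V_i)$ carrying a smooth family $\{F^{(i)}_v\}_{v\in\bar B^{\,p(n-d)+1}}$ of diffeomorphisms of $M$, $F^{(i)}_0=\mathrm{id}$, $F^{(i)}_{-v}=(F^{(i)}_v)^{-1}$, such that for $T\in\mathcal U(V_i)$ the function $v\mapsto\lVert(F^{(i)}_v)_\#T\rVert(M)$ has a strict interior maximum at $v=0$ and drops by a fixed amount on $\partial\bar B^{\,p(n-d)+1}$.

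The technical heart is to adapt the Deformation Theorem of \cite{MarquesNeves}: starting from a $p$-sweepout $\Phi\colon X\to\mathcal Z_d(M^n;\mathbb Z_2;\mathbf M)$ with $\dim X=p(n-d)$, $\Phi^*\bar\lambda^{\,p}\ne0$, and $\sup_X\mathbf M(\Phi)\le\omega_p^d+\delta$, I would build a competitor $\widetilde\Phi$ with $\widetilde\Phi^*\bar\lambda^{\,p}\ne0$ but $\sup\mathbf M(\widetilde\Phi)<\omega_p^d$, contradicting the definition of $\omega_p^d$. Away from $\bigcup_i\mathcal U(V_i)$ the mass of $\Phi$ is already bounded below $\omega_p^d$ (otherwise a new limit would enter $\mathbf C(\Pi)$), so only $U=\Phi^{-1}(\bigcup_i\mathcal U(V_i))$ needs improvement; over $U$ one precomposes the cycles with the diffeomorphisms $F^{(i)}$, and uses the extra $\bigl(p(n-d)+1\bigr)$-st parameter --- one more than $\dim X=p(n-d)$ --- to push the resulting $\bigl(p(n-d)+1\bigr)$-parameter family onto $\partial\bar B^{\,p(n-d)+1}$, where the mass has dropped, while a partition of unity on $X$ subordinate to $\{\Phi^{-1}(\mathcal U(V_i))\}_i$, together with Almgren's interpolation machinery and the no-concentration-of-mass estimate, keeps the competitor continuous in the $\mathbf M$-topology and preserves $\bar\lambda^{\,p}\ne0$. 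This is exactly where $p(n-d)+1$ is used: the unstable family must carry one more parameter than the dimension $p(n-d)$ of the sweepout, just as in the hypersurface case $n-d=1$ of \cite{MarquesNeves}.

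I expect the main obstacle to be this last step in general codimension: one must set up the discrete-to-continuous interpolation for families of $d$-cycles with mass bounds, the no-mass-concentration lemma, and the gluing of the $L$ local deformations into a single globally defined homotopy of $p$-sweepouts, keeping the factor $n-d$ correct through the parameter count, and moreover coordinate this with the pull-tight construction so that the resulting $S$ is simultaneously almost minimizing in annuli and not $\bigl(p(n-d)+1\bigr)$-unstable. By contrast, the passage from the classical Morse index to instability, used only implicitly via Definition~\ref{unstable_def}, is routine: if $S$ is a smooth minimal submanifold and the second variation is negative definite on a $k$-dimensional space of normal fields, then extending a basis to ambient vector fields and flowing gives a $k$-parameter family of ambient diffeomorphisms whose mass functional has a strict maximum at the origin, so classical index $\ge k$ forces $k$-instability.
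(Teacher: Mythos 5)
Your overall strategy (deform the sweepout using the $(p(n-d)+1)$-parameter unstable families, with the extra parameter beating the dimension $p(n-d)$ of the domain) is the right one, and you correctly locate where the codimension enters. But the argument as set up has two genuine gaps. First, the contradiction framing does not close. The negation of the theorem is that every \emph{stationary, integral, almost minimizing} varifold of mass $\omega_p^d$ is $\bigl(p(n-d)+1\bigr)$-unstable; it is not that every element of the critical set $\mathbf C(\Pi)$ is unstable. Even after pull-tight, $\mathbf C(\Pi)$ contains stationary varifolds that need not be integral or almost minimizing, and for these no unstable family is available. Consequently your claim that ``away from $\bigcup_i\mathcal U(V_i)$ the mass of $\Phi$ is already bounded below $\omega_p^d$'' fails: points of $X$ mapping near such elements have mass arbitrarily close to $\omega_p^d$ but cannot be pushed down. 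This is why the paper does not aim for $\sup\mathbf M(\widetilde\Phi)<\omega_p^d$. Instead, Theorem~\ref{Thm:deform} produces a new min--max sequence $\{\Psi_i\}$, still at level $\omega_p^d$, whose image eventually avoids a fixed $\mathbf F$-neighborhood of \emph{every} unstable varifold in $\mathcal{APR}_{p,d}$; Almgren--Pitts is then reapplied to $\{\Psi_i\}$ to extract a stationary almost minimizing $S$ in the new critical set, which is therefore not unstable. Relatedly, because one must avoid all of $\mathcal U(\omega^d_p)$ (not merely the unstable varifolds in the original critical set, since the deformation could otherwise create new unstable limits), and $\mathcal U(\omega^d_p)$ is in general only a countable union of compact sets (Lemma~\ref{lem:union_compact}), a finite cover is unavailable; Step~1 of the proof of Theorem~\ref{Thm:deform} is devoted to building a countable, locally finite covering with controlled overlaps.

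Second, the gluing of the local deformations over overlapping neighborhoods $\mathcal U(V_i)\cap\mathcal U(V_j)$ cannot be done with a partition of unity: interpolating between the families $F^{(i)}_v$ and $F^{(j)}_v$ destroys the uniform concavity of $v\mapsto\|(F_v)_\#T\|(M)$, which is exactly what makes the mass drop quantitative. This is also where the Marques--Neves route genuinely breaks in higher codimension: their one-at-a-time exclusion rests on countability of the relevant minimal objects via the Structure theorem, which is unavailable here. The paper replaces this by the hierarchical deformation of Y.~Li and Song: nested index sets $K(F')=\bigcup_{F'\subset F}K(F)$ on the faces of a fine subdivision of $X_i$, an obstruction-theoretic perturbation away from the critical points $m(V)$ (Lemma~\ref{unsthomotopy}, using $\pi_j(\mathbb S^{N})=0$ for $j<N$), followed by the gradient flow of Lemma~\ref{lem:gradflow}, assembled skeleton by skeleton (Lemma~\ref{lem:homotopy} and Step~4 of Theorem~\ref{Thm:deform}). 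Your proposal flags this step as the main obstacle, correctly, but the specific tools you name (partition of unity plus Almgren interpolation) would not resolve it.
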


The reason why the bound is $p(n-d)$, in contrast to $p$ in the case of hypersurfaces, is the dimension of the cohomology class detected by $p$-sweepouts by $d$-dimensional cycles on an $n$-dimensional manifold, which is exactly $p(n-d)$.

As a corollary of Theorem~\ref{thm:intro-main}, Proposition~\ref{equiv_lemma} and the regularity of 1-dimensional stationary integral varifolds~\cite{AllardAlmgren,A2,CaCa,LiokumovichStaffa,NR,Pi2,Pi1} we obtain the following result.

\begin{theorem}\label{thm:intro-nets}
Let $(M^{n},g)$ be a closed Riemannian manifold of dimension $n \geq 2$. For every $p\in\mathbb N$ there exist disjoint embedded stationary geodesic nets \(G_i:\Gamma_i\to M\), with $\Gamma_i$ a good weighted multigraph for each $1 \leq i \leq N(p)$, such that
\[
\omega_p^1(M,g)= \sum_{i=1}^{N(p)} \operatorname{length}_g(G_i)
\quad\text{and}\quad
\sum_{i=1}^{N(p)}\mathrm{index}(G_i)\le p(n-1).
\]
\end{theorem}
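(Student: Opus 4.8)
The plan is to treat Theorem~\ref{thm:intro-nets} as a specialization of Theorem~\ref{thm:intro-main} to $d=1$, upgraded through the structure theory of one–dimensional stationary integral varifolds and the dictionary furnished by Proposition~\ref{equiv_lemma}. First I would apply Theorem~\ref{thm:intro-main} with $d=1$ to obtain $S\in\mathcal{IV}_1(M)$ which is stationary, almost minimizing in annuli, satisfies $\lVert S\rVert(M)=\omega_p^1(M,g)$, and is not $\bigl(p(n-1)+1\bigr)$-unstable in the sense of Definition~\ref{unstable_def} — that is, $\mathrm{index}(S)\le p(n-1)$.

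Next I would invoke the regularity theory for one–dimensional stationary integral varifolds \cite{AllardAlmgren,A2,CaCa,LiokumovichStaffa,NR,Pi2,Pi1}: $\mathrm{supp}\,\lVert S\rVert$ is a finite union of geodesic segments, $S$ has constant positive integer multiplicity along each segment, only finitely many junction points occur, and the first–variation (balancing) condition holds at every junction. Decomposing $\mathrm{supp}\,\lVert S\rVert$ into connected components and performing the routine combinatorial clean-up — subdividing at junctions, suppressing spurious degree-two vertices, recording the multiplicities as edge weights — each component is presented by an embedding $G_i:\Gamma_i\to M$ of a good weighted multigraph $\Gamma_i$, the images $G_i(\Gamma_i)$ are pairwise disjoint, and
\[
\sum_{i=1}^{N(p)}\operatorname{length}_g(G_i)=\lVert S\rVert(M)=\omega_p^1(M,g).
\]
Precisely this equivalence — that $S$ being a stationary integral $1$-varifold, almost minimizing in annuli, is the same as the $G_i$ being disjoint embedded stationary geodesic nets on good weighted multigraphs — is the content of Proposition~\ref{equiv_lemma}, which I would simply cite.

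To transfer the index bound I would use the remaining half of Proposition~\ref{equiv_lemma}, namely that it identifies the variational notions on the two sides of the dictionary: the Morse index of a stationary geodesic net $G_i$ coincides with the largest $q$ for which $G_i$ is $q$-unstable, and the instability notion is additive over disjoint pieces. The one genuinely local point to check is this additivity: a destabilizing $q$-parameter family for $S$ can be cut off to each component, while families supported near distinct components are supported on disjoint sets and may be combined, so that $\mathrm{index}(S)=\sum_i\mathrm{index}(G_i)$. Combined with Step~1 this yields $\sum_{i=1}^{N(p)}\mathrm{index}(G_i)=\mathrm{index}(S)\le p(n-1)$, completing the proof; when $n=2$ this reads $\le p$, consistent with \cite{lorenzo}.

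The hard part is not in this corollary but in Proposition~\ref{equiv_lemma} itself — showing that the ambient, varifold-theoretic notion of $\bigl(q+1\bigr)$-instability (built from families of deformations and a min--max over a cube of parameters) faithfully reproduces the classical second-variation Morse index of a stationary geodesic net, in particular correctly accounting for the junction points and the one-sided directions there. Since that proposition is available, the only obstacles that remain here are the additivity of the instability index over connected components, and checking that the output of the regularity theory genuinely meets the hypotheses of Proposition~\ref{equiv_lemma} (e.g.\ that $S$ carries no isolated points or degenerate strata that would obstruct the good-multigraph description) — both of which are local and routine.
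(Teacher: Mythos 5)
Your proposal is correct and follows essentially the same route as the paper, which presents Theorem~\ref{thm:intro-nets} exactly as the composite of Theorem~\ref{thm:intro-main} with $d=1$, the decomposition of a stationary integral $1$-varifold into disjoint embedded nets on good weighted multigraphs (Lemma~2.5 of \cite{LiokumovichStaffa} plus the classical regularity results), and Proposition~\ref{equiv_lemma} together with the additivity $\operatorname{index}(G)=\sum_i\operatorname{index}(G_i)$ recorded just before it. The only slight misattribution is that the decomposition into good weighted multigraphs comes from the cited regularity/structure results rather than from Proposition~\ref{equiv_lemma} itself, which only supplies the equivalence between $k$-instability and $\operatorname{index}\ge k$.
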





\textbf{Outline.}
The paper is structured as follows. In Section~\ref{prelim}, we recall notation and Almgren--Pitts min--max theory. In Section~\ref{subsec-unstable}, we introduce a notion of $k$-unstable varifold, prove its equivalence with the usual Morse index for stationary geodesic nets, and record some properties. In Section~\ref{subsec-compact}, we establish compactness properties for the space of $k$-unstable stationary integral varifolds. In Section~\ref{subsec-deform}, we prove the Deformation Theorem and Theorem~\ref{thm:intro-main}. In Section~\ref{sec-optimal}, we analyze $\omega^1_1(S^n,g_{\operatorname{round}})$ and prove that the bound must depend on $n$ and that it is optimal for $\omega_1^1$.

We now sketch the proof of Theorem~\ref{thm:intro-main}. In the minimal hypersurface setting, Morse index bounds~\cite{MarquesNeves} use the Structure theorem~\cite{abraham1970bumpy,white2017bumpy,white1991space} to deduce countability of smooth minimal hypersurfaces, which allows one to exclude high-index hypersurfaces inductively, one at a time. In higher codimension the Structure theorem is not available, so we instead use the hierarchical deformation developed in~\cite{yangyangli,song_hierarchical}, adapted to stationary integral varifolds.

Given a min--max sequence for $\omega^d_p$, we homotope its image away from the set of $\bigl(p(n-d)+1\bigr)$-unstable stationary integral $d$-varifolds. The main issue is that a homotopy with this property, while also preserving the min--max property, cannot be constructed in a single step. Instead, since the domains are simplicial complexes, we follow the standard procedure: we first define the homotopy on the $0$-cells and then induct on dimension. To construct the homotopy on higher-dimensional cells, we must ensure that the lower-dimensional homotopies are compatible; for this, we use a construction of Marques--Neves~\cite{MarquesNeves} and then follow an associated gradient flow, which pushes the image away from the $\bigl(p(n-d)+1\bigr)$-unstable varifolds.

\textbf{Acknowledgments.}
The authors are grateful to Prof. Yevgeny Liokumovich for supervision and encouragement. Part of this paper was completed while T.T. was at Nazarbayev University in Astana; T.T. thanks Prof. Durvudkhan Suragan and Makhpal Manarbek for their hospitality. T.T. was supported by the Dr. Sergiy and Tetyana Kryvoruchko Graduate Scholarship in Mathematics. M.G. was supported by the Ontario Graduate Scholarship.

\section{Preliminaries}\label{prelim}
\subsection{Notation}
Let $(M^{n},g)$ be a closed Riemannian manifold. In this section we recall standard notation from geometric measure theory.
\begin{itemize}
    \item $\mathbf{I}_d(M;\mathbb{Z}_2)$: the space of $d$-dimensional mod $2$ flat chains in $M$, equipped with the topology induced by the flat metric $\mathcal{F}$.
    \item $\mathcal{Z}_d(M;\mathbb{Z}_2)$: the space of $d$-dimensional flat cycles $T \in \mathbf{I}_d(M;\mathbb{Z}_2)$ such that $\partial T = 0$.
    \item $\mathbf{M}$: the mass functional on $\mathbf{I}_d(M;\mathbb{Z}_2)$.
    \item $G_d(M):=\{(x,P): x\in M,\ P\in G(T_xM,d)\}$, where $G(T_xM,d)$ is the Grassmannian of unoriented $d$-dimensional subspaces of $T_xM$.
    \item $\mathcal{V}_d(M)$: the space of $d$-varifolds on $M$, i.e. Radon measures on $G_d(M)$, equipped with the weak topology, induced by the varifold metric $\mathbf{F}$ on each $\{V\in\mathcal{V}_d(M):\lVert V \rVert (M) \leq c\}$ for $c>0$.
    \item $\mathcal{IV}_d(M)$: the space of integral $d$-varifolds on $M$.
    \item $\|V\|$: the weight measure on $M$ induced by $V\in\mathcal{V}_d(M)$.
    \item $|T|$: the integral $d$-varifold induced by a mod $2$ flat chain $T\in\mathbf{I}_d(M;\mathbb{Z}_2)$.
    \item The $\mathbf{F}$-metric on $\mathbf{I}_d(M;\mathbb{Z}_2)$ is
    \[
        \mathbf{F}(S,T)\coloneqq \mathcal{F}(S,T)+\mathbf{F}\bigl(|S|,|T|\bigr),\qquad S,T\in\mathbf{I}_d(M;\mathbb{Z}_2).
    \]
    \item $I(1, n)$: the cell complex on the unit interval $I$ whose $1$-cells are the intervals $[0, 1 \cdot 3^{-n}], [1 \cdot 3^{-n}, 2 \cdot 3^{-n}], \cdots, [1 - 3^{-n}, 1]$, and whose $0$-cells are the endpoints $[0], [3^{-n}], [2\cdot 3^{-n}], \cdots, [1]$.
    \item $I(m, n)$: the cell complex on $I^m$, i.e., $I(1, n)^{m\otimes} = \underbrace{I(1, n)\otimes I(1, n)\otimes \cdots \otimes I(1, n)}_{\text{$m$ times}}$.
\end{itemize}

\subsection{Almgren--Pitts min--max theory}

By Almgren's isomorphism theorem \cite{A1,A2,GuL}, the space of mod-\(2\) \(d\)-cycles on the \(n\)-sphere,
\(\mathcal{Z}_d(S^n;\mathbb{Z}_2)\), is weakly homotopy equivalent to the Eilenberg--MacLane space \(K(\mathbb{Z}_2,n-d)\).
Let \(\overline\mu\in H^{\,n-d}\big(\mathcal{Z}_d(S^n;\mathbb{Z}_2);\mathbb{Z}_2\big)\) denote the nontrivial cohomology class.
All cup powers of \(\overline\mu\) are nontrivial, and the cohomology ring of \(\mathcal{Z}_d(S^n;\mathbb{Z}_2)\)
is generated by these cup powers together with their Steenrod squares \cite{Ha}.

Choose a smooth map \(f\colon M\to S^n\) that is a diffeomorphism from a small ball \(B\subset M\)
onto \(S^n\setminus\{p\}\) and that sends \(M\setminus B\) to the point \(p\). The induced map on cycle spaces
\(F\colon\mathcal{Z}_d(M;\mathbb{Z}_2)\to\mathcal{Z}_d(S^n;\mathbb{Z}_2)\) pulls back \(\overline\mu\) to a nontrivial class
\(\mu:=F^*(\overline\mu)\in H^{\,n-d}\big(\mathcal{Z}_d(M;\mathbb{Z}_2);\mathbb{Z}_2\big)\).

Let \(X\) be a finite simplicial complex. A continuous map \(\Phi\colon X\to\mathcal{Z}_d(M;\mathbb{Z}_2)\) is called a \textit{\(p\)-sweepout}
if \(\Phi^*(\mu^p)\neq 0\in H^{\,p(n-d)}(X;\mathbb{Z}_2)\), \(\Phi\) satisfies the
no-concentration-of-mass property (cf. \cite{LMN,MN1}), and \(X\) is a cubical subcomplex of \(I(2p+1,q)\) for some \(q\ge1\).
The collection of all such \(p\)-sweepouts is denoted \(\mathcal{P}^d_p(M,g)\), the \(p\)-\textit{admissible set}. The \(d\)-dimensional \(p\)-width is
\[
\omega_p^d(M,g):=\inf_{\Phi\in\mathcal{P}^d_p}\sup_{x\in X}\mathbf M\bigl(\Phi(x)\bigr).
\]

\begin{definition}
A \textit{min--max sequence} for \(\mathcal{P}^d_p\) is a sequence of \(p\)-sweepouts \(\{\Phi_i\}_{i=1}^\infty\subset\mathcal{P}^d_p\)
such that
\[
\lim_{i\to\infty}\sup_{x\in X_i}\mathbf M\bigl(\Phi_i(x)\bigr)=\omega_p^d(M,g).
\]
Its \textit{critical set} is
\[
\mathbf C\bigl(\{\Phi_i\}\bigr)
:=\Big\{V\in\mathcal V_d(M): \|V\|(M)=\omega_p^d(M,g)\ \text{ and }\ V=\lim_j|\Phi_{i_j}(x_j)|\Big\},
\]
where the limit is taken in the varifold sense.
\end{definition}
We may assume the domains of the min--max sequence are finite simplicial complexes of dimension at most $p(n-d)$ (see Lemma 2.25 in~\cite{Staffa_Weyl}).

Almgren's theory implies that widths are realized by stationary integral varifolds that are almost minimizing in annuli~\cite{A1, A2, MarquesNevesTop}.

\begin{theorem}[Min--max theorem for \(d\)-dimensional widths]
\label{thm:minmax}
For each \(p\) there exists $S \in \mathcal{IV}_d(M)$ that is stationary and almost minimizing in annuli such that
\[
\omega_p^d(M,g)= \| S \|(M).
\]
\end{theorem}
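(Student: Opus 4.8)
The plan is to run the Almgren--Pitts min--max scheme in the cycle space $\mathcal{Z}_d(M;\mathbb{Z}_2)$; since none of the machinery is sensitive to the codimension, this is essentially the classical argument of \cite{A2}, as recorded in \cite{MarquesNevesTop}, kept general in $d$. Fix a min--max sequence $\{\Phi_i\}_{i=1}^\infty \subset \mathcal{P}^d_p(M,g)$ with domains $X_i$, so that $\sup_{X_i}\mathbf{M}\circ\Phi_i \to \omega_p^d(M,g)$, and form its critical set $\mathbf{C}(\{\Phi_i\})$. The first step is the \emph{pull-tight} procedure: using the first variation, one builds a continuous deformation of $\mathcal{V}_d(M)$ (equivalently, of $\mathcal{Z}_d(M;\mathbb{Z}_2)$ through the mass functional) which fixes the stationary varifolds of mass $\omega_p^d(M,g)$ and strictly decreases mass on all others, depending continuously on the varifold; composing the $\Phi_i$ with this map (and using that it is homotopic to the identity, hence preserves $\Phi_i^*(\mu^p)\ne 0$ and the no-concentration-of-mass property) yields a new min--max sequence whose critical set is contained in the old one and consists only of stationary integral varifolds. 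This step is entirely dimension-independent.

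The second and main step is to extract from the tightened critical set a varifold that is almost minimizing in small annuli. One argues by contradiction: if no element of $\mathbf{C}(\{\Phi_i\})$ is almost minimizing in annuli, then near every point and in all sufficiently small concentric annuli, every cycle appearing in a tail of the sweepout can be homotoped so as to strictly reduce its mass inside the annulus while not increasing it much outside. Feeding these local competitors through the Almgren--Pitts interpolation apparatus --- the continuous-to-discrete discretization of a sweepout into a map on a fine grid of $I(m,q)$ with controlled fineness and mass, and the Almgren extension back to a continuous sweepout --- one assembles a global competitor $\Phi_i'$ with $\sup \mathbf{M}\circ\Phi_i' \le \omega_p^d(M,g)-\delta$ for a fixed $\delta>0$, contradicting the definition of $\omega_p^d(M,g)$. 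All the interpolation estimates used here are codimension-blind, so the hypersurface proof transfers essentially verbatim; the $p$-sweepout condition survives because every modification is a homotopy of maps into $\mathcal{Z}_d(M;\mathbb{Z}_2)$.

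The last step is regularity of the resulting varifold $S$, obtained as $S = \lim_j |\Phi_{i_j}(x_j)|$ with $\|S\|(M)=\omega_p^d(M,g)$. Stationarity together with Allard monotonicity gives a positive lower density bound, and the almost-minimizing property yields a replacement varifold in each small annulus which, by the usual iteration, forces $S$ to be rectifiable; integrality then follows because $S$ is a varifold limit of integral mod-$2$ cycles, using Almgren's compactness and the constancy theorem, so $S \in \mathcal{IV}_d(M)$. I expect the second step to be the main obstacle: producing an almost minimizing element requires the full discrete/continuous interpolation theory of Almgren and Marques--Neves, and one must check (as in \cite{MarquesNevesTop}) that the combinatorial argument goes through; but in codimension greater than one no genuinely new difficulty arises, precisely because the interpolation constructions and the pull-tight deformation never use an orientation or the codimension.
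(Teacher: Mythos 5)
The paper does not prove Theorem~\ref{thm:minmax} at all: it is quoted as a known consequence of Almgren--Pitts theory, with the references \cite{A1,A2,MarquesNevesTop} standing in for the proof. Your outline (pull-tight, the combinatorial almost-minimizing argument via discretization/interpolation, then rectifiability and integrality of the limit) is precisely the standard argument carried out in those references, so it matches the paper's intended justification.
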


For $d = 1$ one has the
stronger regularity: stationary integral \(1\)-varifolds are stationary geodesic nets \cite{A1,A2,CaCa,NR,Pi2,Pi1}. Combining this regularity with Lemma 2.5 of \cite{LiokumovichStaffa} yields the following.

\begin{theorem}[Min--max theorem for \(1\)-dimensional widths]
\label{thm:1d-minmax}
For each \(p\) there exist disjoint embedded stationary geodesic nets \(G_i:\Gamma_i\to M\), with $\Gamma_i$ a good weighted multigraph for each $1 \leq i \leq N(p)$, such that
\[
\omega_p^1(M,g)= \sum_{i=1}^{N(p)} \operatorname{length}_g(G_i).
\]
\end{theorem}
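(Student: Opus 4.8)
The plan is to specialize the general min--max existence theorem to $d=1$ and then replace the varifold description of the minimizer by a combinatorial one. First I would apply Theorem~\ref{thm:minmax} with $d=1$: for each $p$ this produces $S\in\mathcal{IV}_1(M)$ that is stationary and almost minimizing in annuli, with $\omega_p^1(M,g)=\lVert S\rVert(M)$. From this point on the min--max machinery plays no further role, and everything reduces to understanding the geometry of the fixed varifold $S$.

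The second step is to invoke the regularity theory for one-dimensional stationary integral varifolds \cite{AllardAlmgren,A2,CaCa,NR,Pi2,Pi1}: $\operatorname{spt}\lVert S\rVert$ is a finite union of geodesic segments meeting only at finitely many vertices, $S$ carries a locally constant positive integer multiplicity along each segment, and the vanishing of the first variation forces the balancing condition that at every vertex the multiplicity-weighted sum of the incident inward unit tangents is zero. Thus $S=|G|$ for a stationary geodesic net $G$. At this stage $G$ need not be embedded, its edges may overlap or pass through superfluous degree-two vertices, so it is not yet in the normal form required by the statement.

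The final step is to apply Lemma 2.5 of \cite{LiokumovichStaffa}, which is exactly this normalization: it rewrites the stationary geodesic net $G$ as finitely many maps $G_i\colon\Gamma_i\to M$, $1\le i\le N(p)$, with each $\Gamma_i$ a good weighted multigraph, each $G_i$ an embedding, the images $G_i(\Gamma_i)$ pairwise disjoint, and the weighted images of the $G_i$ reassembling the varifold $S$ (concretely, overlapping collinear segments are merged with their multiplicities added, and collinear edges across a degree-two vertex are concatenated). Comparing total masses then gives
\[
\sum_{i=1}^{N(p)}\operatorname{length}_g(G_i)=\lVert S\rVert(M)=\omega_p^1(M,g),
\]
where $\operatorname{length}_g(G_i)$ denotes the weighted length of $G_i$. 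I expect the only genuine difficulty to be precisely the content of the cited lemma: ensuring that merging overlaps and suppressing degree-two vertices neither loses nor double-counts length, i.e. that the decomposition preserves total mass; granting \cite{LiokumovichStaffa}, the remainder is bookkeeping.
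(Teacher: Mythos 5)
Your proposal is correct and follows exactly the paper's route: apply Theorem~\ref{thm:minmax} with $d=1$, invoke the regularity theory identifying one-dimensional stationary integral varifolds with stationary geodesic nets, and then normalize via Lemma~2.5 of \cite{LiokumovichStaffa} to obtain the disjoint embedded nets on good weighted multigraphs. The paper treats this as an immediate combination of those three ingredients, so no further argument is needed.
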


\begin{definition}
The \textit{Almgren--Pitts realization of the d-dimensional \(p\)-width}, denoted \(\mathcal{APR}_{p,d}(M,g)\), is the set of all \(d\)-varifolds \(S\) such that
\begin{enumerate}
    \item \(\|S\|(M)=\omega_p^d(M,g)\),
    \item \(S\) is a stationary integral varifold that is almost minimizing in annuli.
\end{enumerate}
Theorem~\ref{thm:minmax} guarantees \(\mathcal{APR}_{p,d}(M,g)\neq\emptyset\).
\end{definition}

From now on we fix a closed Riemannian manifold $(M,g)$ so we omit the explicit dependence on $(M,g)$ from the notation.

\section{Index upper bound}
\subsection{Unstable variations and index}\label{subsec-unstable}

We introduce a notion of $k$-unstable stationary integral varifolds from~\cite{MarquesNeves}.

\begin{definition}
    Let $S \in \mathcal{IV}_d(M)$ be a stationary integral varifold and $\varepsilon \geq 0$. We say that $S$ is \textit{$k$-unstable in an $\varepsilon$-neighborhood} if there exist $0 < c_0 < 1$ and a smooth family $\{F_v\}_{v \in \overline{B}^k} \subset \operatorname{Diff}(M)$ with $F_0 = \operatorname{Id}, F_{-v} = F^{-1}_v$ for all $v \in \overline{B}^k$ such that, for any $V \in \overline{\textbf{B}}^{\mathbf{F}}_{2\varepsilon}(S)$, the smooth function
    \begin{equation*}
        A^V: \overline{B}^k \rightarrow [0, +\infty), \quad A^V(v) = \lVert (F_v)_\# V \rVert (M)
    \end{equation*}
    satisfies
    \begin{itemize}
        \item $A^V$ has a unique maximum at $m(V) \in \overline{B}^k_{c_0/\sqrt{10}}(0)$;
        \item $-\frac{1}{c_0} \operatorname{Id} \leq D^2 A^V(u) \leq -c_0 \operatorname{Id}$ for all $u \in \overline{B}^k$.
    \end{itemize}
\end{definition}
Here $(F_v)_\#$ denotes the push-forward operation. Notice that since $A^V$ is strictly concave, the maximum is also the unique critical point. Therefore, since $S$ is stationary, necessarily $m(S)=0$. If $V_i$ tends to $V$ in the $\mathbf{F}$-topology then $A^{V_i}$ tends to $A^V$ in the smooth topology \cite[Section~2.3(2)]{Pi2}.

Notice that if $V_i\rightarrow V$ then, by the prior remarks, $m(V_i)\rightarrow m(V)$. Since the ball $\overline{\mathbf B}^{\mathbf{F}}_{2\varepsilon}(S)$ is metrizable, the map $V \rightarrow m(V)$ is continuous on it.  Thus if $S$ is stationary and $k$-unstable in a $0$-neighborhood then it is $k$-unstable in an $\varepsilon$-neighborhood for some $\varepsilon > 0$.
\begin{definition} \label{unstable_def}
    Let $S \in \mathcal{IV}_d(M)$ be a stationary integral varifold. We say that $S$ is \textit{$k$-unstable} if it is $k$-unstable in an $\varepsilon$-neighborhood for some $\varepsilon > 0$.
\end{definition}

\begin{figure}
  \centering
  \resizebox{0.6\linewidth}{!}{\includegraphics{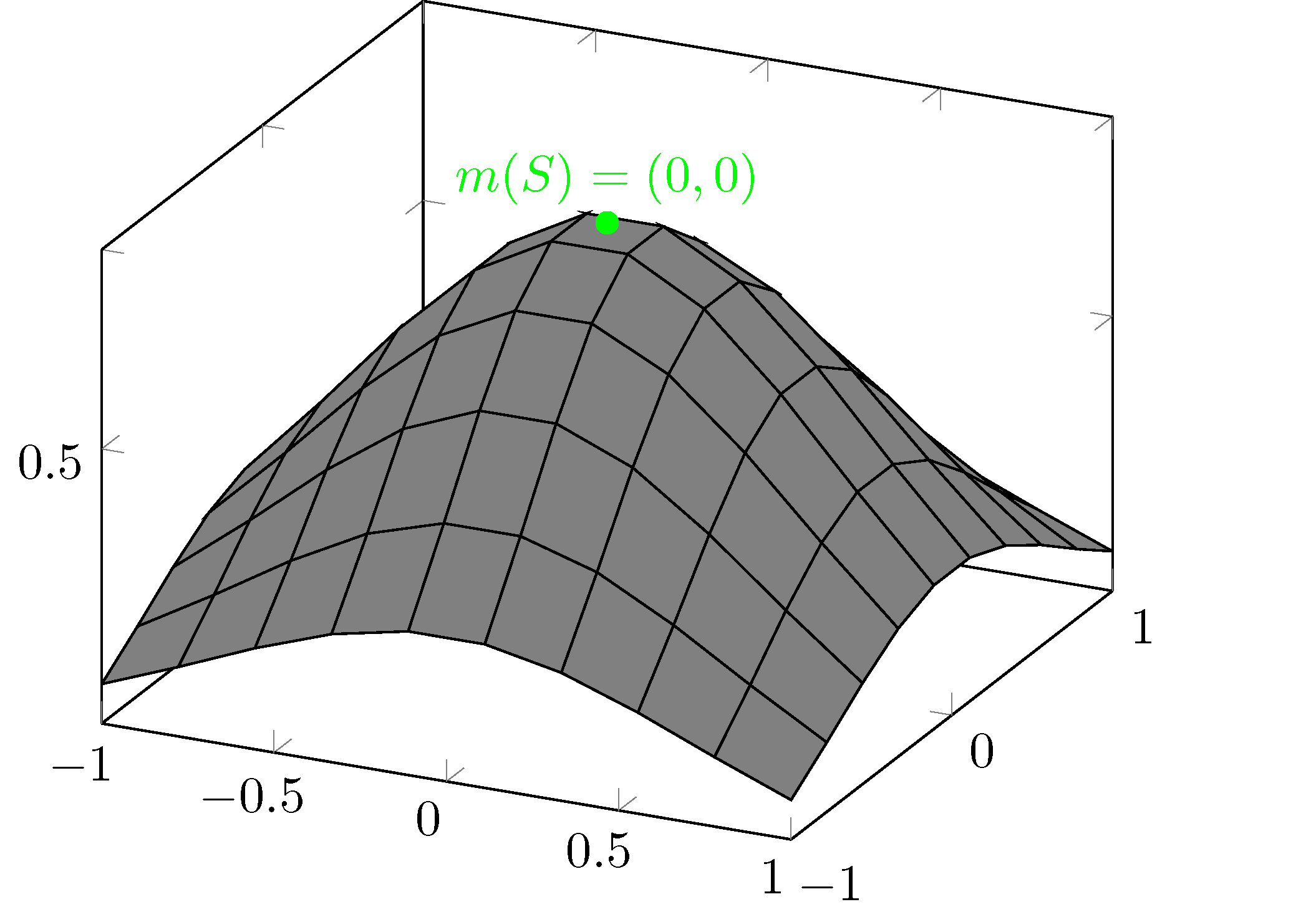}}
  \caption{$A^S$ for stationary integral varifold $S$. Note $m(S)=(0,0)$.}
  \label{graph-1}
\end{figure}

See Figures~\ref{graph-1} and~\ref{graph-2} for the plot of $A^S$ when $S$ is $2$-unstable, and for the plot of $A^V$ for a nonstationary $V\in\overline{\mathbf{B}}^{\mathbf{F}}_{2\varepsilon}(S)$.

\begin{figure}
  \centering
  \resizebox{0.6\linewidth}{!}{\includegraphics{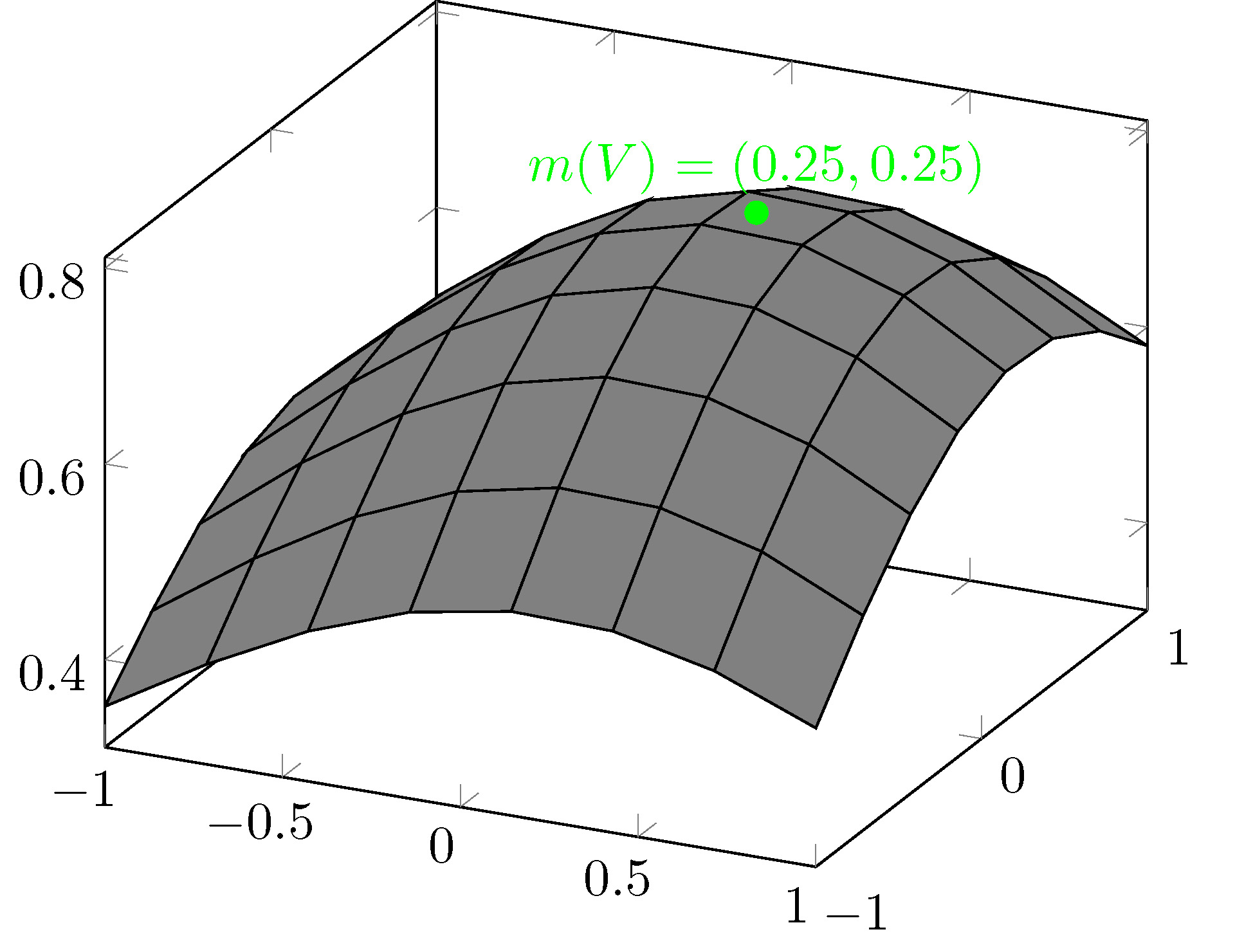}}
  \caption{$A^V$ for $V$ not stationary. Note $m(V)\neq(0,0)$.}
  \label{graph-2}
\end{figure}

Let $G \in \mathcal{IV}_1(M)$ be a stationary geodesic $\Gamma$-net with the set of vertices $\mathscr{V}$. We denote $\mathfrak{X}_k(G)$ the space of continuous vector fields along $G$ which are of class $C^k$ along each edge. Observe that $\mathfrak{X}_k(G)$ is always well-defined for $k\leq2$ and could be defined for larger values of $k$ provided the restrictions of $G$ to the edges have enough regularity. By Lemma~2.5 of \cite{LiokumovichStaffa}, $G$ is equal in the varifold sense to a disjoint union of embedded stationary geodesic nets $G_i:\Gamma_i\to M$, where each $\Gamma_i$ is a good weighted multigraph, for $1\le i\le P$. In what follows, we always regard $G$ as equipped with this net structure.

Since the $G_i$ are disjoint and embedded, every $X\in \mathfrak{X}_2(G)$ can be identified with its pushforward, which is a section of $TM|_{G(\Gamma)}$. We will use the same notation for $X$ and its pushforward.

We define $\mathfrak{C}_{k}(G)\subset \mathfrak{X}_k(G)$ to be the subspace of vector fields with the property that, at each vertex $v\in \mathscr{V}$, they admit a local $C^k$ extension to a neighborhood of $v$. The key point is that if $X\in \mathfrak{C}_{\infty}(G)$, then by Lemma~10.12 in \cite{Lee} there exists a global smooth vector field $\tilde{X}$ on $M$ such that
\[
X=\tilde{X}\big|_{G(\Gamma)}.
\]

Let $Q(\cdot,\cdot)$ be the bilinear form on $\mathfrak{X}_2(G)$ given by the second variation of the length functional at $G$ (see \cite{Staffa} for an explicit formula). Define $\operatorname{index}(G)$ to be the maximal dimension of a subspace of $\mathfrak{C}_{\infty}(G)$ on which $Q$ is negative definite. Note that
\[
\operatorname{index}(G)=\sum_{i=1}^{P}\operatorname{index}(G_i).
\]
We now show that the two notions of instability agree for stationary geodesic nets.

\begin{proposition} \label{equiv_lemma}
    If $G \in \mathcal{IV}_1(M)$ is a stationary geodesic net, then $G$ is $k$-unstable if and only if $\operatorname{index}(G) \geq k$.
\end{proposition}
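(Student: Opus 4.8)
The plan is to prove the two implications separately, in each case turning the data defining one notion of instability into the data defining the other; stationarity of $G$ is the feature that makes both conversions work, since it annihilates the acceleration (first-variation) term in the second variation of length. The computation used throughout is: if $t\mapsto F_{tw}$ is a smooth path in $\operatorname{Diff}(M)$ with $F_0=\operatorname{Id}$ and initial velocity a vector field $X_w$ on $M$, then, because $G$ is stationary,
\[
\frac{d^{2}}{dt^{2}}\Big|_{t=0}\big\lVert (F_{tw})_{\#}G\big\rVert(M)\;=\;Q\big(X_w|_G,\,X_w|_G\big),
\]
the acceleration $\partial_t^{2}|_{t=0}F_{tw}$ contributing $\delta G(\,\cdot\,)=0$; here $Q$ is the second variation form of the length functional at $G$ (see \cite{Staffa}), and it depends only on the restriction of the vector field to $G$.

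For $k$-unstable $\Rightarrow\operatorname{index}(G)\ge k$: given a witnessing family $\{F_v\}_{v\in\overline B^{k}}$ with constant $c_0$, set $X_i:=\partial_{v_i}|_{v=0}F_v$, a smooth vector field on $M$ whose restriction $X_i|_G$ lies in $\mathfrak C_\infty(G)$. Applying the identity above to $t\mapsto F_{tw}$, whose initial velocity is $\sum_i w_i X_i$, together with the Hessian bound $D^{2}A^{G}(0)\le -c_0\operatorname{Id}$ from the definition, gives
\[
Q\Big(\textstyle\sum_i w_i X_i|_G,\ \sum_i w_i X_i|_G\Big)=\big\langle w,\,D^{2}A^{G}(0)\,w\big\rangle\le -c_0|w|^{2}<0
\]
for every $w\neq 0$. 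Hence the $X_i|_G$ are linearly independent in $\mathfrak C_\infty(G)$ and $Q$ is negative definite on the $k$-dimensional subspace they span, so $\operatorname{index}(G)\ge k$.

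For the converse, suppose $\operatorname{index}(G)\ge k$, so there is a $k$-dimensional $W\subset\mathfrak C_\infty(G)$ with $Q|_W$ negative definite. Choose a basis of $W$ orthonormal for the inner product $-Q|_W$ and, by Lemma~10.12 of \cite{Lee}, extend its members to global smooth vector fields $\tilde X_1,\dots,\tilde X_k$ on $M$; fix a small parameter $\rho\in(0,1)$ and let $F_v$ be the time-$1$ flow of $\rho\sum_i v_i\tilde X_i$, for $v\in\overline B^{k}$. Then $F_0=\operatorname{Id}$, $F_{-v}=F_v^{-1}$, the family is smooth, and along a ray $v=tw$ it is the genuine time-$(\rho t)$ flow of $\sum_i w_i\tilde X_i$; so by stationarity and the displayed identity, $\nabla A^{G}(0)=0$ and $D^{2}A^{G}(0)=-\rho^{2}\operatorname{Id}$.

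The main obstacle is to upgrade this infinitesimal picture at $(v,V)=(0,G)$ to the uniform statement the definition demands: the pinched Hessian $-c_0^{-1}\operatorname{Id}\le D^{2}A^{V}(v)\le -c_0\operatorname{Id}$ and a maximum in $\overline B^{k}_{c_0/\sqrt{10}}(0)$, for \emph{all} $v\in\overline B^{k}$ and \emph{all} $V\in\overline{\mathbf B}^{\mathbf F}_{2\varepsilon}(G)$. This is exactly where smallness of $\rho$ is used and is handled as in \cite{MarquesNeves}: since $\lVert(F_v)_{\#}V\rVert(M)$ reduces to $\lVert V\rVert(M)$ when $\rho=0$ (for every $v$) and its first $\rho$-derivative at $\rho=0$ is linear in $v$, Taylor expansion in $\rho$ gives, uniformly in $v\in\overline B^{k}$, that $D^{2}A^{V}(v)=-\rho^{2}\operatorname{Id}+O(\rho^{3})+\eta(V)$ and $\nabla A^{V}(0)=\rho\,\xi(V)$ with $\eta(V)\to0$ and $\xi(V)\to0$ as $\mathbf F(V,G)\to0$; here one uses continuity of $V\mapsto A^{V}$ into $C^{\infty}(\overline B^{k})$ \cite[Section~2.3(2)]{Pi2}, continuity of the first variation, and $\delta G(\tilde X_i)=0$. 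Fixing $\rho$ small, then $\varepsilon$ small, and setting $c_0:=\rho^{2}/2\in(0,1)$, the Hessian bounds hold throughout $\overline B^{k}$, so each $A^{V}$ is strictly concave, hence has a unique critical point $m(V)$, which is its unique maximum and satisfies $|m(V)|\le c_0^{-1}|\nabla A^{V}(0)|<c_0/\sqrt{10}$ once $\varepsilon$ is small enough; thus $\{F_v\}$ witnesses $k$-instability. The crux is precisely this last step: a single family built from an index-$k$ subspace must verify the Hessian pinching on the whole ball and for every nearby $V$, and the slowdown factor $\rho$—which produces $O(\rho^{2})$ control of $A^{V}-\lVert V\rVert(M)$—is what reconciles these requirements.
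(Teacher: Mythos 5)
Your proof is correct and follows essentially the same route as the paper's: the forward implication via the velocity fields $Y_i=\partial_{v_i}|_{0}F_v$ and the second-variation identity, and the converse by extending an index-$k$ subspace of $\mathfrak C_\infty(G)$ to global fields and flowing for a short time. The only difference is one of detail: where the paper compresses the uniform Hessian pinching over all $v\in\overline B^k$ and all $V\in\overline{\mathbf B}^{\mathbf F}_{2\varepsilon}(G)$ into "by choosing $\varepsilon$ sufficiently small," you spell out the Taylor-expansion-in-$\rho$ argument from \cite{MarquesNeves}, which is a welcome addition rather than a deviation.
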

\begin{proof}
Since
\[
\operatorname{index}(G)=\sum_{i=1}^{P}\operatorname{index}(G_i),
\]
it suffices to prove the statement when $G$ is embedded.

Suppose $\operatorname{index}(G) \geq k$. Then there exist vector fields $X_1, \ldots, X_k \in \mathfrak{C}_{\infty}(G)$ such that
\begin{equation*}
    Q(\sum_i a_i X_i, \sum_i a_i X_i) < 0
\end{equation*}
for any $(a_i) \neq 0 \in \mathbb{R}^k$. For $\delta > 0$ and each $i$ take the extensions $\tilde{X}_i$ as above. By construction, these extensions remain linearly independent on an open neighborhood of $G(\Gamma)$, and moreover $Q$ is negative definite on $\tilde{X}_1, \ldots, \tilde{X}_k$. Denote by $\phi^X_t$ the flow of $X$ and, for some $\varepsilon$ to be chosen later, define
\begin{equation*}
    F_v(x) = F(v, x) = \phi_\varepsilon^{\sum_i v_i \tilde{X}_i} (x)
\end{equation*}
for $v \in \overline{B}^k$ and $x \in M$. Then $F_0 = \operatorname{Id}, F_{-v} = F^{-1}_v$ for all $v \in \overline{B}^k, DA^G(0) = 0$. Moreover, since
\begin{equation*}
    F(tv, x) = \phi_\varepsilon^{t \sum_i v_i \tilde{X}_i} (x) = \phi_t^{\varepsilon \sum_i v_i \tilde{X}_i} (x),
\end{equation*}
we have
\begin{equation*}
    \frac{\partial}{\partial t} F(tv, x) = \varepsilon \sum_i v_i \tilde{X}_i (x).
\end{equation*}
Hence $D^2A^G(0) < 0$. By choosing $\varepsilon$ sufficiently small, we conclude that $G$ is $k$-unstable.

Now suppose $G$ is $k$-unstable, and let $\{F_v\}_{v \in \overline{B}^k}$ be the corresponding family of diffeomorphisms. Define $Y_i(x) = \frac{\partial}{\partial t} \mid_{t = 0} F (t e_i, x) = \frac{\partial F}{\partial v_i} (0, x)$. Then
\begin{equation*}
    0 > \frac{d^2}{dt^2} \mid_{t = 0} \operatorname{length}_g\bigl(F_{tv}(G)\bigr) = Q(\sum_{i} v_i Y_i, \sum_{i} v_i Y_i).
\end{equation*}
This means $\{Y_1, \ldots, Y_k\}$ is linearly independent and spans a subspace of vector fields restricted to which $Q$ is negative definite. Therefore $\operatorname{index}(G) \geq k$.
\end{proof}

\begin{definition}
    For a stationary integral varifold $S \in \mathcal{IV}_d(M)$, we define its \textit{Morse index} to be
    \begin{equation*}
        \operatorname{index}(S) = k,
    \end{equation*}
    provided that it is $k$-unstable but not $(k + 1)$-unstable.
\end{definition}

\begin{remark}
    This definition is compatible with the usual definition of the Morse
index for smooth minimal hypersurfaces (see Proposition 4.3 in~\cite{MarquesNeves}).
\end{remark}

In what follows, we assume that $S \in \mathcal{IV}_d(M)$ is a stationary integral varifold that is $k$-unstable in an $\varepsilon$-neighborhood for some $\varepsilon > 0$. Then, it is associated with a quadruple $(\varepsilon, c_{0}, \{F_v\}, m)$.
\begin{lemma}
\label{lem:nearunstable}
  If $\tilde S\in \mathcal{IV}_d(M)$ is stationary and $\mathbf{F}(S,\tilde S)<\varepsilon$ then $\tilde S$ is also $k$-unstable.
\end{lemma}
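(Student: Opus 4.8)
The plan is to reuse \emph{verbatim} the deformation family $\{F_v\}_{v\in\overline B^k}$ and the concavity constant $c_0$ furnished by the $k$-instability of $S$, after merely shrinking the radius of the neighbourhood so that the $\mathbf F$-ball centred at $\tilde S$ is contained in the one centred at $S$. The key observation is that the two defining conditions of ``$k$-unstable in an $\varepsilon$-neighbourhood'' — that $A^V$ attain its (unique) maximum at a point $m(V)\in\overline B^k_{c_0/\sqrt{10}}(0)$, and that $-\tfrac1{c_0}\operatorname{Id}\le D^2A^V(u)\le -c_0\operatorname{Id}$ on $\overline B^k$ — are statements about the function $A^V$ for \emph{every} $V$ in the ball, and hence are automatically inherited by any sub-ball.

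Concretely, I would first set $\varepsilon':=\varepsilon-\tfrac12\mathbf F(S,\tilde S)$, which is positive precisely because $\mathbf F(S,\tilde S)<\varepsilon$. By the triangle inequality for the $\mathbf F$-metric, any $V$ with $\mathbf F(\tilde S,V)\le 2\varepsilon'$ satisfies $\mathbf F(S,V)\le \mathbf F(S,\tilde S)+2\varepsilon'=2\varepsilon$, so $\overline{\mathbf B}^{\mathbf F}_{2\varepsilon'}(\tilde S)\subseteq \overline{\mathbf B}^{\mathbf F}_{2\varepsilon}(S)$. Consequently, for each $V\in\overline{\mathbf B}^{\mathbf F}_{2\varepsilon'}(\tilde S)$ the function $A^V(v)=\lVert(F_v)_\# V\rVert(M)$ is one of the functions already controlled by the $k$-instability of $S$, so it has a unique maximum at $m(V)\in\overline B^k_{c_0/\sqrt{10}}(0)$ and obeys the required Hessian bounds. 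This exhibits $\tilde S$ as $k$-unstable in an $\varepsilon'$-neighbourhood, with associated quadruple $\bigl(\varepsilon',c_0,\{F_v\},m|_{\overline{\mathbf B}^{\mathbf F}_{2\varepsilon'}(\tilde S)}\bigr)$, and hence $k$-unstable in the sense of Definition~\ref{unstable_def}. (As a consistency check one notes that, since $\tilde S$ is stationary, the first variation of mass along each flow $t\mapsto F_{tv}$ vanishes, so $DA^{\tilde S}(0)=0$; strict concavity then forces $m(\tilde S)=0$, matching the remark after the definition of $k$-instability.)

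There is essentially no analytic obstacle here: the argument is a ``shrink the neighbourhood'' observation. The only points requiring care are the elementary bookkeeping with the radii (the factor of two and the use of closed versus open $\mathbf F$-balls) and recording explicitly that $\tilde S\in\overline{\mathbf B}^{\mathbf F}_{2\varepsilon}(S)$, so that $A^{\tilde S}$ itself already falls under the control provided by the $k$-instability of $S$; everything else is inherited directly from the data $(\varepsilon,c_0,\{F_v\},m)$ attached to $S$.
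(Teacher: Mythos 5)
Your proposal is correct and follows essentially the same route as the paper: both arguments shrink the radius so that the closed $\mathbf F$-ball around $\tilde S$ sits inside $\overline{\mathbf B}^{\mathbf F}_{2\varepsilon}(S)$ and then reuse the same family $\{F_v\}$ and constant $c_0$, differing only in the (immaterial) choice of the new radius. The bookkeeping with the triangle inequality checks out.
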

\begin{proof}
      Let $\tilde\varepsilon=\frac{1}{2}(\mathbf{F}(S,\tilde S)+\varepsilon)>0$, so that $\mathbf{F}(S,\tilde S)<\tilde\varepsilon<\varepsilon$, and let $\{F_v\}_{v \in \overline{B}^k}$ be the diffeomorphism family from the definition of $S$ being $k$-unstable.

      Then $\overline{\textbf{B}}^{\mathbf{F}}_{2(\varepsilon-\tilde\varepsilon)}(\tilde S)\subset\overline{\textbf{B}}^{\mathbf{F}}_{2\varepsilon}(S)$.
      Hence for any $V\in\overline{\mathbf B}^{\mathbf F}_{2(\varepsilon-\tilde\varepsilon)}(\tilde S)$, $A^V(v) = \lVert (F_v)_\# V \rVert (M)$ also satisfies the two properties. Hence we deduce that $\tilde S$ is also $k$-unstable.
\end{proof}
\begin{lemma}
\label{lem:disjoint}
    There exists $\varepsilon_0 \in (0, \varepsilon)$, such that for any $\tilde{\varepsilon} \in (0, \varepsilon_0)$ and $V \in \mathbf{\overline{B}}^{\textbf{F}}_{2\tilde{\varepsilon}}(S)$,
    \begin{equation*}
        (F_{\mathbb S^{k-1}})_{\#} (V) \cap \mathbf{\overline{B}}^{\textbf{F}}_{2\tilde{\varepsilon}}(S) = \emptyset.
    \end{equation*}
\end{lemma}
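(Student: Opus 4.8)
The goal is to find $\varepsilon_0$ so small that the map $v \mapsto (F_v)_\# V$ moves $V$ off its own $\mathbf F$-ball when $v$ ranges over the sphere $\mathbb S^{k-1}$ (not the open ball). The mechanism should be: on the sphere $\|v\|=1$, the function $A^V$ is strictly below its maximum by a definite amount, because $A^V$ is $c_0$-concave with maximum attained at $m(V)$ lying deep inside the ball (in $\overline B^k_{c_0/\sqrt{10}}(0)$). So $(F_v)_\# V$ has strictly smaller mass than $V$ by a fixed gap, and mass is an $\mathbf F$-continuous quantity; any element of $\overline{\mathbf B}^{\mathbf F}_{2\tilde\varepsilon}(S)$ has mass within $O(\tilde\varepsilon)$ of $\|S\|(M)$, so once $\tilde\varepsilon$ is small the mass gap forces $(F_v)_\# V \notin \overline{\mathbf B}^{\mathbf F}_{2\tilde\varepsilon}(S)$.

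More precisely, I would proceed as follows. \textbf{Step 1: A uniform mass drop on the sphere.} Fix any $V\in\overline{\mathbf B}^{\mathbf F}_{2\varepsilon}(S)$. Since $D^2 A^V \le -c_0\operatorname{Id}$ and $DA^V(m(V))=0$, Taylor expansion along the segment from $m(V)$ to $v$ gives $A^V(v) \le A^V(m(V)) - \tfrac{c_0}{2}\|v-m(V)\|^2$. For $\|v\|=1$ and $\|m(V)\|\le c_0/\sqrt{10} \le 1/\sqrt{10}$ we have $\|v-m(V)\|\ge 1 - 1/\sqrt{10} > 1/2$, hence $A^V(v) \le A^V(m(V)) - c_0/8$. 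Also $A^V(m(V)) = \max A^V \ge A^V(0) = \|V\|(M)$ since $0\in\overline B^k$. Wait—that inequality goes the wrong way for the conclusion I want; instead I use $A^V(m(V))\le \max A^V$ trivially and need an \emph{upper} bound on $A^V(v)$ in terms of $\|V\|(M)$. For that, note $A^V(0)=\|V\|(M)$ and by concavity $A^V(m(V)) \le A^V(0) + DA^V(0)\cdot m(V) \le \|V\|(M) + \|DA^V(0)\|\cdot \|m(V)\|$. Since $DA^V(0) = 0$ when $V=S$ is stationary and $V\mapsto DA^V(0)$ is $\mathbf F$-continuous with $\|DA^V(0)\|\to 0$ as $V\to S$, after shrinking to a ball $\overline{\mathbf B}^{\mathbf F}_{2\varepsilon_1}(S)$ we get $A^V(m(V)) \le \|V\|(M) + c_0/16$, hence $A^V(v) \le \|V\|(M) - c_0/16$ for all $\|v\|=1$.

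\textbf{Step 2: Converting the mass drop into $\mathbf F$-separation.} Since $\mathbf F$ controls mass, there is a constant so that $V\in\overline{\mathbf B}^{\mathbf F}_{2\tilde\varepsilon}(S)$ implies $\big|\|V\|(M) - \|S\|(M)\big| \le C\tilde\varepsilon$, and similarly $\big|\|(F_v)_\# V\|(M) - \|S\|(M)\big| \le C\tilde\varepsilon$ would follow if $(F_v)_\# V$ were in the ball. But Step 1 gives $\|(F_v)_\# V\|(M) = A^V(v) \le \|V\|(M) - c_0/16 \le \|S\|(M) + C\tilde\varepsilon - c_0/16$. Choosing $\varepsilon_0 = \min\{\varepsilon_1, c_0/(32C)\}$, for $\tilde\varepsilon < \varepsilon_0$ this is $< \|S\|(M) - C\tilde\varepsilon$, which contradicts $(F_v)_\# V \in \overline{\mathbf B}^{\mathbf F}_{2\tilde\varepsilon}(S)$. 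Hence $(F_{\mathbb S^{k-1}})_\#(V) \cap \overline{\mathbf B}^{\mathbf F}_{2\tilde\varepsilon}(S) = \emptyset$, as claimed.

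\textbf{Main obstacle.} The delicate point is Step 1: extracting a \emph{uniform} (independent of $V$) mass gap on the sphere. The concavity bound $D^2A^V\le -c_0\operatorname{Id}$ is uniform in $V$ by hypothesis, and $m(V)$ lies in a fixed small ball, so the only input needed is that $A^V(m(V))$ exceeds $\|V\|(M)=A^V(0)$ by at most a controlled amount — equivalently that $\|DA^V(0)\|$ is uniformly small near $S$. This is exactly the content of the remarks preceding Definition \ref{unstable_def}: $V\mapsto A^V$ is continuous in the smooth topology, $A^S$ has its critical point at $0$, so $DA^V(0)\to DA^S(0)=0$ as $V\to S$ in $\mathbf F$; shrinking $\varepsilon$ makes $\|DA^V(0)\|$ as small as we like uniformly on the ball. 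Everything else is the standard fact that $\mathbf F$-convergence implies convergence of mass, plus elementary quadratic estimates.
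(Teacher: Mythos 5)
Your proposal is correct, but it takes a genuinely different route from the paper. The paper argues by contradiction and compactness: assuming the conclusion fails along $\varepsilon_i\to 0$, it extracts limits $V_i\to S$, $v_i\to v\in\mathbb S^{k-1}$ to get $(F_v)_\#S=S$, and then refutes this by computing the mass drop $A^S(0)-A^S(v)\ge c_0/2$ only at the limit $S$, where $m(S)=0$. You instead make the mass drop \emph{uniform} on a fixed small ball around $S$ and combine it with the elementary fact that mass is $1$-Lipschitz in $\mathbf F$ (test the varifolds against the constant function $1$), which yields an explicit $\varepsilon_0$ and avoids subsequence extraction; the paper's version is shorter but non-constructive. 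Your Step 1 is sound as written — concavity gives $A^V(m(V))\le A^V(0)+DA^V(0)\cdot m(V)$, and the uniform smallness of $DA^V(0)$ near $S$ follows from the $C^\infty$-continuity of $V\mapsto A^V$ recorded before Definition~\ref{unstable_def} (together with compactness of closed $\mathbf F$-balls of mass-bounded varifolds, or a sequential argument). One simplification worth noting: you do not need $DA^V(0)$ at all. The two-sided Hessian bound already gives $A^V(m(V))-A^V(0)\le \tfrac{1}{2c_0}|m(V)|^2\le \tfrac{c_0}{20}$, while $|v-m(V)|\ge 1-\tfrac{1}{\sqrt{10}}$ gives $A^V(v)\le A^V(m(V))-\tfrac{c_0}{2}\bigl(1-\tfrac{1}{\sqrt{10}}\bigr)^2$, so $A^V(v)\le \|V\|(M)-\tfrac{c_0}{6}$ uniformly for all $V\in\overline{\mathbf B}^{\mathbf F}_{2\varepsilon}(S)$ with no shrinking of $\varepsilon$ in that step; this removes the only "delicate point" you flag.
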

\begin{proof}
Suppose not and there exists $V_i \in \mathbf{\overline{B}}^{\mathbf{F}}_{2\varepsilon_i}(S), \varepsilon_i \rightarrow 0$, and $v_i \in \mathbb S^{k-1}$, such that
\begin{align*}
    \mathbf{F}(V_i, S) \leq 2\varepsilon_i \text{ and } (F_{v_i})_{\#}(V_i) \in \mathbf{\overline{B}}^{\textbf{F}}_{2{\varepsilon}_i}(S).
\end{align*}
By the compactness of $\mathbb S^{k-1}$, up to a subsequence, we may assume that $v_i \rightarrow v \in \mathbb S^{k-1}$. Since $F_v$ is smooth, we can take a limit as $i \rightarrow \infty$ and obtain
\begin{equation*}
    (F_v)_{\#}(S) = S.
\end{equation*}
However, by calculating the mass of both sides, we have the following inequality
\begin{align*}
    \|S\|(M) - \|(F_v)_{\#}S\|(M) &= A^S(0) - A^S(v) \\
    &= - \int_0^1 \frac{d}{dt} A^S (tv) dt \\
    &= - \int_{0}^{1} \int_{0}^{t} \frac{d^2}{ds^2} A^S(sv) ds dt \\
    &= - \int_{0}^{1} \int_{0}^{t} D^2 A^S(sv)(v,v)dsdt \\
    &\geq \int_{0}^{1} \int_{0}^{t} c_0 ds dt = \frac{c_0}{2} > 0.
\end{align*}
This implies that $(F_v)_{\#}(S) \neq S$, giving a contradiction.
\end{proof}

In the remainder of this paper, for technical reasons, we always take $\varepsilon = \varepsilon_0$ in the definition of $k$-instability, and it is easy to check that the varifold is still $k$-unstable with the new $\varepsilon$.
\begin{lemma}
\label{lem:grad}
    Given any fixed $V \in \mathbf{\overline{B}}^{\textbf{F}}_{2\varepsilon}(S)$, if an integral curve $x: [0, L] \rightarrow \overline{B}^k$ disjoint from $m(V)$ is defined by
    \begin{equation*}
        \frac{d}{dt} x(t) = - f(t) \nabla A^V\bigl(x(t)\bigr),
    \end{equation*}
    where $f(t)$ is a positive continuous function, then we have
    \begin{equation*}
        A^V\bigl(x(L)\bigr) - A^V\bigl(x(0)\bigr) \leq - \frac{c_0}{2} |x(L) - x(0)|^2.
    \end{equation*}
\end{lemma}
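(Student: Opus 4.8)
The plan is to reparametrize the integral curve by arc length and to exploit the uniform concavity of $A^V$, in the sharp form that $|\nabla A^V|$ grows at least linearly along the flow. Since $x$ is disjoint from $m(V)$ and $m(V)$ is the unique critical point of the strictly concave function $A^V$, we have $\nabla A^V(x(t))\neq 0$ for every $t\in[0,L]$, so $|x'(t)|=f(t)\,|\nabla A^V(x(t))|>0$ and $x$ is a regular curve. Let $\ell:=\int_0^L|x'(t)|\,dt$ denote its length and let $y\colon[0,\ell]\to\overline{B}^k$ be its unit-speed reparametrization, so that $y(0)=x(0)$, $y(\ell)=x(L)$, and $y'(s)=-\nabla A^V(y(s))/|\nabla A^V(y(s))|$.

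Set $\phi(s):=|\nabla A^V(y(s))|>0$. Differentiating $\phi^2$, substituting the expression for $y'(s)$, and using the upper Hessian bound in the form $\langle w,D^2A^V(y(s))w\rangle\le -c_0|w|^2$ with $w=\nabla A^V(y(s))$, one finds
\[
\frac{d}{ds}\phi(s)^2=-\frac{2}{\phi(s)}\bigl\langle\nabla A^V(y(s)),\,D^2A^V(y(s))\,\nabla A^V(y(s))\bigr\rangle\ \ge\ 2c_0\,\phi(s),
\]
hence $\phi'(s)\ge c_0$ and therefore $\phi(s)\ge c_0 s$. Since $\frac{d}{ds}A^V(y(s))=\langle\nabla A^V(y(s)),y'(s)\rangle=-\phi(s)\le -c_0 s$, integrating over $[0,\ell]$ gives
\[
A^V(x(L))-A^V(x(0))=A^V(y(\ell))-A^V(y(0))\le -\frac{c_0}{2}\,\ell^2,
\]
and the lemma follows since $\ell=\int_0^L|x'(t)|\,dt\ge\bigl|\int_0^L x'(t)\,dt\bigr|=|x(L)-x(0)|$.

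The computation is routine once the arc-length parametrization is in place, so there is no real obstacle; the one point worth flagging is why one should not simply Taylor-expand $A^V$ along the straight chord from $x(0)$ to $x(L)$. That approach leaves the error term $\langle\nabla A^V(x(0)),x(L)-x(0)\rangle$, whose nonpositivity would amount to the velocity field $-\nabla A^V$ turning by at most a right angle along the flow, which is not evident for a general uniformly concave $A^V$. Passing to arc length and tracking the linear growth of $|\nabla A^V|$ avoids this point entirely; note also that only the upper bound $D^2A^V\le -c_0\operatorname{Id}$ is used here, while the lower bound plays no role.
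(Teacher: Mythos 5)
Your proof is correct and follows essentially the same strategy as the paper's: both arguments use the Hessian bound $D^2A^V\le -c_0\operatorname{Id}$ to show that $|\nabla A^V|$ grows at rate at least $c_0$ per unit arc length along the flow, integrate to obtain the drop $-\tfrac{c_0}{2}\ell^2$ in terms of the arc length $\ell$, and then bound $\ell$ below by the chord length $|x(L)-x(0)|$. The only difference is presentational — you reparametrize by arc length at the outset, whereas the paper carries the factor $f(t)\,|\nabla A^V(x(t))|$ through a nested double integral and introduces the arc-length function $l(t)$ at the end — so no further comparison is needed.
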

\begin{proof}
By the fundamental theorem of calculus, we have
\begin{align*}
    A^V&\bigl(x(L)\bigr) - A^V\bigl(x(0)\bigr) = \int_0^L \nabla A^V\bigl(x(t)\bigr) \cdot \frac{d}{dt} x(t) dt = - \int_0^L f(t) \Bigl|\nabla A^V\bigl(x(t)\bigr)\Bigr|^2dt \\
    &\leq - \int_0^L f(t) \Bigl|\nabla A^V\bigl(x(t)\bigr)\Bigr| \Bigl(\int_0^t \frac{d}{ds}\Bigl|\nabla A^V\bigl(x(s)\bigr)\Bigr| ds \Bigr)dt \\
    &= - \int_0^L f(t) \Bigl|\nabla A^V\bigl(x(t)\bigr)\Bigr| \Bigl(\int_0^t \frac{\nabla^2 A^V\bigl(x(s)\bigr)\bigl(\nabla A^V\bigl(x(s)\bigr), \frac{d}{ds}x(s)\bigr)}{\Bigl|\nabla A^V\bigl(x(s)\bigr)\Bigr|} ds \Bigr)dt \\
    &= \int_0^L f(t) \Bigl|\nabla A^V\bigl(x(t)\bigr)\Bigr| \Bigl(\int_0^t f(s) \frac{\nabla^2 A^V\bigl(x(s)\bigr)\bigl(\nabla A^V\bigl(x(s)\bigr), \nabla A^V\bigl(x(s)\bigr)\bigr)}{\Bigl|\nabla A^V\bigl(x(s)\bigr)\Bigr|} ds \Bigr)dt \\
    &\leq - c_0 \int_0^L f(t) \Bigl|\nabla A^V\bigl(x(t)\bigr)\Bigr| \Bigl(\int_0^t f(s) \Bigl|\nabla A^V\bigl(x(s)\bigr)\Bigr| ds \Bigr)dt.
\end{align*}
Here, we use the fact that $\nabla A^V(v) \neq 0$ for any $v \neq m(V)$. Indeed, define the path $\gamma: [0,1] \to \overline{B}^{k}$ as
\begin{equation*}
    \gamma(t) = m(V) + t(v - m(V)).
\end{equation*}
Consider the function:
\begin{equation*}
g(t) = \langle \nabla A^V(\gamma(t)), v - m(V) \rangle.
\end{equation*}
By the Fundamental Theorem of Calculus,
\begin{equation*}
g(1) - g(0) = \int_0^1 \frac{d}{dt} g(t)  dt.
\end{equation*}
The derivative is computed using the chain rule:
\begin{align*}
\frac{d}{dt} g(t) 
= \left\langle \frac{d}{dt} \left[ \nabla A^V(\gamma(t)) \right], v - m(V) \right\rangle &= \left\langle D^2 A^V(\gamma(t)) \cdot \dot{\gamma}(t), v - m(V) \right\rangle \\
&= \langle D^2 A^V(\gamma(t)) (v - m(V)), v - m(V) \rangle
\end{align*}
By the $k$-instability condition,
\[
D^2 A^V(u) \leq -c_0 I \quad \forall u \in \overline{B}^k,
\]
which implies
\[
\langle D^2 A^V(\gamma(t)) w, w \rangle \leq -c_0 \lVert w \rVert^2 \quad \forall w \in \mathbb{R}^{k}.
\]
Applying this with $w = v - m(V)$:
\begin{align*}
g(1) - g(0) 
&= \int_0^1 \langle D^2 A^V(\gamma(t)) (v - m(V)), v - m(V) \rangle  dt \\
&\leq \int_0^1 -c_0 \lVert v - m(V) \rVert^2  dt \\
&= -c_0 \lVert v - m(V) \rVert^2.
\end{align*}
Note that
\begin{align*}
g(0) &= \langle \nabla A^V(m(V)), v - m(V) \rangle = 0 \quad \text{(since $\nabla A^V(m(V)) = 0$)} \\
g(1) &= \langle \nabla A^V(v), v - m(V) \rangle
\end{align*}
Thus
\begin{equation*}
\langle \nabla A^V(v), v - m(V) \rangle \leq -c_0 \lVert v - m(V)\rVert^2 < 0 \quad \text{for} \quad v \neq m(V).
\end{equation*}
Now consider the function $l$ satisfying
\begin{equation*}
    l'(t) = \Bigl|\frac{d}{dt} x(t) \Bigr| = f(t) \Bigl|\nabla A^V(x(t))\Bigr|
\end{equation*}
with $l(0) = 0$. Then
\begin{equation*}
    l(L) = \int^L_0 l'(t) dt = \int^L_0 \Bigl|\frac{d}{dt} x(t) \Bigr| dt \geq \Bigl| \int^L_0 \frac{d}{dt} x(t) dt \Bigr| = \Bigl| x(L) - x(0) \Bigr|.
\end{equation*}
Therefore, we have
\begin{align*}
    A^V\bigl(x(L)\bigr) - A^V\bigl(x(0)\bigr) 
    &\leq -c_0 \int^L_0 l'(t) \int_0^t l'(s) ds dt = -c_0 \int_0^L l'(t) l(t) dt \\ 
    &= - \frac{c_0}{2} l(L)^2 \leq - \frac{c_0}{2} \Bigl| x(L) - x(0) \Bigr|^2.
\end{align*}
This completes the proof.
\end{proof}

\begin{lemma} \label{lem:dist_F}
    There exists an increasing function
    \begin{equation*}
        h_S : \mathbb{R}^+ \rightarrow \mathbb{R}^+,
    \end{equation*}
    such that for any $V \in \overline{\mathbf{B}}^{\mathbf{F}}_{2\varepsilon}(S)$, and $v, w \in \overline{B}^k$,
    \begin{equation*}
        |v - w| \geq h_S \Bigl(\mathbf{F}\bigl((F_v)_\#(V), (F_w)_\#(V)\bigr)\Bigr)
    \end{equation*}
\end{lemma}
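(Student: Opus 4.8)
The plan is to extract from the smooth family $\{F_v\}_{v\in\overline{B}^k}$ a single modulus of continuity for the assignment $v\mapsto (F_v)_\#V$ that is uniform over all $V$ in the ball, and then to take $h_S$ to be a generalized inverse of that modulus. Write $\mathcal{K}:=\overline{\mathbf{B}}^{\mathbf{F}}_{2\varepsilon}(S)$. First I would record that $\mathcal{K}$ is compact: since the $\mathbf{F}$-metric dominates mass, every $V\in\mathcal{K}$ has $\|V\|(M)\le C_0:=\|S\|(M)+2\varepsilon$; the set of varifolds of mass at most $C_0$ is weakly compact, and on it $\mathbf{F}$ metrizes the weak topology, so $\mathcal{K}$ is a closed subset of a compact metric space. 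Because $\{F_v\}$ is smooth on the compact cube $\overline{B}^k$, there is a single constant $C_1\ge C_0$ with $\|(F_v)_\#V\|(M)\le C_1$ for all $v\in\overline{B}^k$ and $V\in\mathcal{K}$, so all the pushforwards in sight lie in one fixed weakly compact, $\mathbf{F}$-metrized set of varifolds.

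Next I would invoke the joint continuity of $(v,V)\mapsto (F_v)_\#V$ on $\overline{B}^k\times\{W:\|W\|(M)\le C_0\}$: for a fixed $C^1$ diffeomorphism the pushforward is continuous in the weak (equivalently $\mathbf{F}$) topology on sets of bounded mass, and the uniform $C^1$-bound on the family $\{F_v\}$ makes this continuity locally uniform as $v$ varies — this is the same type of estimate already used for the smooth dependence of $A^V$ (cf.~\cite[Section~2.3]{Pi2}). Hence
\[
G(v,w,V):=\mathbf{F}\bigl((F_v)_\#V,(F_w)_\#V\bigr)
\]
is continuous on the compact set $\overline{B}^k\times\overline{B}^k\times\mathcal{K}$. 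I would then set $\omega_S(r):=\max\{G(v,w,V): v,w\in\overline{B}^k,\ V\in\mathcal{K},\ |v-w|\le r\}$ for $r>0$; the maximum is attained, $\omega_S$ is nondecreasing, and a routine compactness argument gives $\omega_S(r)\to 0$ as $r\to 0^+$: if not, pick $(v_i,w_i,V_i)$ with $|v_i-w_i|\to 0$ and $G(v_i,w_i,V_i)\ge\delta$, pass to convergent subsequences $v_i\to v^\ast$, $w_i\to v^\ast$, $V_i\to V^\ast$, and obtain $G(v^\ast,v^\ast,V^\ast)=\mathbf{F}\bigl((F_{v^\ast})_\#V^\ast,(F_{v^\ast})_\#V^\ast\bigr)=0$, a contradiction.

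Finally I would turn $\omega_S$ into the required function: by elementary real analysis choose a continuous, strictly increasing bijection $\eta_S$ of $\mathbb{R}^+$ onto itself with $\eta_S\ge\omega_S$ and $\eta_S(r)\to 0$ as $r\to 0^+$, and set $h_S:=\eta_S^{-1}$. Then $h_S:\mathbb{R}^+\to\mathbb{R}^+$ is increasing (with $h_S(t)>0$ for $t>0$), and for any $V\in\mathcal{K}$ and $v,w\in\overline{B}^k$,
\[
\mathbf{F}\bigl((F_v)_\#V,(F_w)_\#V\bigr)=G(v,w,V)\le\omega_S(|v-w|)\le\eta_S(|v-w|),
\]
so applying $h_S$ yields $h_S\bigl(\mathbf{F}((F_v)_\#V,(F_w)_\#V)\bigr)\le|v-w|$, which is the claim. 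The only real content is the uniformity over $V$ — that is, the compactness of $\mathcal{K}$ together with the joint continuity of the pushforward uniform on sets of bounded mass; once $\omega_S$ is in hand, the passage to $h_S$ is routine bookkeeping, the only subtlety being to keep the direction of the final inequality correct.
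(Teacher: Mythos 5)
Your proof is correct and follows essentially the same route as the paper's: the heart of both arguments is the compactness of $\overline{\mathbf{B}}^{\mathbf{F}}_{2\varepsilon}(S)\times\overline{B}^k\times\overline{B}^k$ together with joint continuity of $(v,V)\mapsto (F_v)_\#V$, and a subsequence-contradiction showing that $\mathbf{F}\bigl((F_{v_i})_\#V_i,(F_{w_i})_\#V_i\bigr)$ cannot stay bounded away from $0$ while $|v_i-w_i|\to 0$. If anything, your version is slightly more complete, since you actually assemble the increasing function $h_S$ as the inverse of a modulus of continuity rather than only establishing the positive lower bound on each level set.
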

\begin{proof}
It suffices to show that for every $c > 0$, if
\begin{equation*}
    \Bigl\{ (v,w) : \exists V \in \overline{\mathbf{B}}^{\mathbf{F}}_{2\varepsilon}(S), \mathbf{F}\bigl((F_v)_\#(V), (F_w)_\#(V)\bigr) = c \Bigr\} \neq \emptyset
\end{equation*}
then
\begin{equation*}
    \Bigl\{ | v - w | : \exists V \in \overline{\mathbf{B}}^{\mathbf{F}}_{2\varepsilon}(S), \mathbf{F}\bigl((F_v)_\#(V), (F_w)_\#(V)\bigr) = c \Bigr\}
\end{equation*}
has a positive lower bound. We argue by contradiction. Suppose that there exists a sequence $V_i \in \overline{\mathbf{B}}^{\mathbf{F}}_{2\varepsilon}(S)$ and $v_i, w_i \in \overline{B}^k$ with $\mathbf{F}\bigl((F_{v_i})_\#(V_i), (F_{w_i})_\#(V_i)\bigr) = c$ but $| v_i - w_i | \rightarrow 0$. By compactness, up to a subsequence, we may assume that
\begin{equation*}
    V_i \rightarrow V, v_i \rightarrow v, w_i \rightarrow v,
\end{equation*}
and $\mathbf{F}\bigl((F_v)_\#(V), (F_v)_\#(V)\bigr) = c > 0$, which gives a contradiction.
\end{proof}

\subsection{Compactness under index bounds}\label{subsec-compact}

\begin{definition}
    Let \(\mathcal{U}(\omega^d_p)\) be the subset of \(\mathcal{APR}_{p,d}\) consisting of all \(\bigl(p(n-d)+1\bigr)\)-unstable stationary integral $d$-varifolds, and let \(\mathcal{S}(\omega^d_p)\) denote its complement in \(\mathcal{APR}_{p,d}\), i.e. \(\mathcal{S}(\omega^d_p)\) is the subset of stationary integral $d$-varifolds with Morse index at most \(p(n-d)\).
\end{definition}

\begin{remark}
    If \(\mathcal{U}(\omega^d_p)=\emptyset\), the main theorems are trivial, so we may assume  \(\mathcal{U}(\omega^d_p)\neq\emptyset\).
\end{remark}

\begin{lemma}\label{lem:union_compact}
    The set \(\mathcal{U}(\omega^d_p)\) is a countable union of compact sets, i.e.
    \[
        \mathcal{U}(\omega^d_p)=\bigcup_{i=1}^\infty\mathcal{U}_i(\omega^d_p),
    \]
    where each \(\mathcal{U}_i(\omega^d_p)\) is compact.
\end{lemma}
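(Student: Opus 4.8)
The plan is to exhibit $\mathcal U(\omega^d_p)$ as an open subset of a compact metric space and then appeal to the elementary fact that such a set is a countable union of compact sets. Throughout, set $k := p(n-d)+1$, so that $\mathcal U(\omega^d_p)$ is precisely the set of $k$-unstable elements of $\mathcal{APR}_{p,d}$ and $\mathcal S(\omega^d_p) = \mathcal{APR}_{p,d}\setminus\mathcal U(\omega^d_p)$; recall also that $\mathbf F$ is an honest metric on $\{V\in\mathcal V_d(M):\lVert V\rVert(M)\le\omega^d_p\}$, so $\mathcal{APR}_{p,d}$ and its subsets are metric spaces.

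First I would record the ambient compactness. The set
\[
  K := \bigl\{\,V\in\mathcal{IV}_d(M)\ :\ V\text{ stationary},\ \lVert V\rVert(M)=\omega^d_p\,\bigr\}
\]
is $\mathbf F$-compact: this is Allard's integral compactness theorem together with the continuity of total mass under $\mathbf F$-convergence (here we use that $M$ is closed) and the closedness of the stationarity condition under varifold limits. Since being almost minimizing in annuli is also preserved under $\mathbf F$-limits — a standard fact in Almgren--Pitts theory — the set $\mathcal{APR}_{p,d}$ is closed in $K$, hence compact. I expect to cite this rather than reprove it, so it is the only nonsoft ingredient.

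Next I would show that $\mathcal U(\omega^d_p)$ is open in $\mathcal{APR}_{p,d}$, equivalently that $\mathcal S(\omega^d_p)$ is closed there. Let $S\in\mathcal U(\omega^d_p)$. By Definition~\ref{unstable_def}, $S$ is $k$-unstable in an $\varepsilon$-neighborhood for some $\varepsilon>0$. If $\tilde S\in\mathcal{APR}_{p,d}$ satisfies $\mathbf F(S,\tilde S)<\varepsilon$, then $\tilde S$ is a stationary integral varifold within $\mathbf F$-distance $\varepsilon$ of $S$, so Lemma~\ref{lem:nearunstable} makes $\tilde S$ itself $k$-unstable, hence $\tilde S\in\mathcal U(\omega^d_p)$. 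Thus the $\varepsilon$-ball about $S$ in $\mathcal{APR}_{p,d}$ lies in $\mathcal U(\omega^d_p)$. The important point is that Lemma~\ref{lem:nearunstable} is used \emph{pointwise}, for one nearby varifold at a time: at no stage does one need to pass to a limit of the destabilizing diffeomorphism families, which is exactly what keeps this step elementary.

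Finally I would write down the exhaustion. For $i\ge1$ set
\[
  \mathcal U_i(\omega^d_p) := \bigl\{\,S\in\mathcal{APR}_{p,d}\ :\ \mathbf F\bigl(S,\mathcal S(\omega^d_p)\bigr)\ge 1/i\,\bigr\},
\]
with the convention that $\mathbf F(S,\emptyset)=+\infty$, so that $\mathcal U_i(\omega^d_p)=\mathcal{APR}_{p,d}=\mathcal U(\omega^d_p)$ if $\mathcal S(\omega^d_p)=\emptyset$. Each $\mathcal U_i(\omega^d_p)$ is the preimage of a closed half-line under the $1$-Lipschitz (hence continuous) map $S\mapsto\mathbf F(S,\mathcal S(\omega^d_p))$, so it is closed in the compact space $\mathcal{APR}_{p,d}$ and therefore compact; the $\mathcal U_i(\omega^d_p)$ are nested and, since $\mathbf F(S,\mathcal S(\omega^d_p))=0$ for $S\in\mathcal S(\omega^d_p)$, they are contained in $\mathcal U(\omega^d_p)$; and because $\mathcal S(\omega^d_p)$ is closed in $\mathcal{APR}_{p,d}$, every $S\in\mathcal U(\omega^d_p)$ has $\mathbf F(S,\mathcal S(\omega^d_p))>0$ and so lies in $\mathcal U_i(\omega^d_p)$ for all large $i$. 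Hence $\mathcal U(\omega^d_p)=\bigcup_{i\ge1}\mathcal U_i(\omega^d_p)$, which is the assertion. The main obstacle, then, is really just the compactness of $\mathcal{APR}_{p,d}$ — i.e., the stability of the almost-minimizing-in-annuli property under varifold convergence — with everything downstream being point-set topology powered by Lemma~\ref{lem:nearunstable}.
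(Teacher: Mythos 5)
Your proposal is correct and follows essentially the same route as the paper: compactness of $\mathcal{APR}_{p,d}$ via Allard's compactness theorem, openness of $\mathcal U(\omega^d_p)$ (equivalently closedness of $\mathcal S(\omega^d_p)$) via Lemma~\ref{lem:nearunstable}, and the exhaustion $\mathcal U_i(\omega^d_p)=\{S:\mathbf F(S,\mathcal S(\omega^d_p))\ge 1/i\}$. The only cosmetic differences are that you phrase the middle step as openness rather than sequential closedness and absorb the case $\mathcal S(\omega^d_p)=\emptyset$ into a convention instead of treating it separately.
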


\begin{proof}
    By the compactness theorem for integral varifolds~\cite{Allard}, \(\mathcal{APR}_{p,d}\) in the set of $d$-varifolds is compact in varifold topology (equivalently, in $\mathbf{F}$-topology as \(\mathcal{APR}_{p,d}\) is uniformly mass bounded). We first show that \(\mathcal{S}(\omega^d_p)\) is closed in \(\mathcal{APR}_{p,d}\), hence compact.

    Let \(\{S_i\}_{i\in\mathbb N}\subset\mathcal{S}(\omega^d_p)\) be a sequence converging to \(S\in\mathcal{APR}_{p,d}\) in the \(\mathbf F\)-topology. We must prove \(\operatorname{index}(S)\le p(n-d)\). Suppose, to the contrary, that \(S\) is \(\bigl(p(n-d)+1\bigr)\)-unstable in an $\varepsilon$-neighborhood, with $\varepsilon>0$. Choose \(i_0\) large enough that \(\mathbf F(S,S_i)<\varepsilon\) for all \(i\ge i_0\). For such \(i\), \(S_i\) is \(\bigl(p(n-d)+1\bigr)\)-unstable by Lemma~\ref{lem:nearunstable}, contradicting \(S_i\in\mathcal{S}(\omega^d_p)\). Therefore \(S\) cannot be \(\bigl(p(n-d)+1\bigr)\)-unstable, so \(\operatorname{index}(S)\le p(n-d)\) and \(S\in\mathcal{S}(\omega^d_p)\). This proves \(\mathcal{S}(\omega^d_p)\) is closed.

    If \(\mathcal{S}(\omega^d_p)=\emptyset\), then \(\mathcal{U}(\omega^d_p)=\mathcal{APR}_{p,d}\) is compact and the lemma follows. Otherwise, for each \(i\in\mathbb N\) define
    \[
        \mathcal{U}_i(\omega^d_p):=\{S\in\mathcal{U}(\omega^d_p):\ \mathbf F\bigl(S,\mathcal{S}(\omega^d_p)\bigr)\ge 1/i\}.
    \]
    Each \(\mathcal{U}_i(\omega^d_p)\) is closed in the compact set \(\mathcal{APR}_{p,d}\), hence compact. Since every \(S\in\mathcal{U}(\omega^d_p)\) has positive \(\mathbf F\)-distance to the closed set \(\mathcal{S}(\omega^d_p)\), we have
    \[
        \mathcal{U}(\omega^d_p)=\bigcup_{i=1}^\infty\mathcal{U}_i(\omega^d_p),
    \]
    as required.
\end{proof}

\subsection{Deformation theorem}\label{subsec-deform}

Fix a sequence of stationary integral varifolds \(\{S_k\}_{k=1}^\infty \subset \mathcal{U}(\omega^d_p)\). By the definition of \(\bigl(p(n-d)+1\bigr)\)-instability, each \(S_k\) is associated with a quadruple \((\varepsilon_k, c_{0,k}, \{F^k_v\}, m_k)\).

For a nonempty set \(K\subset \mathbb{N}^+\), define
\[
\mathbf{B}^{\mathbf F}_K
:= \mathbf{B}^{\mathbf F}_K\bigl(\{S_k\},\{\varepsilon_k\}\bigr)
= \bigcap_{k\in K}\mathbf{B}^{\mathbf F}_{\varepsilon_k}(S_k),
\qquad
\bar{k}(K):=\min\{k\in K\}.
\]
For \(\lambda\ge 0\), set
\[
\mathbf{B}^{\mathbf F}_{\lambda,K}
:= \bigcap_{k\in K}\mathbf{B}^{\mathbf F}_{(2-2^{-\lambda})\varepsilon_k}(S_k).
\]
In particular, for any \(0\le \lambda_1\le \lambda_2\),
\[
\mathbf{B}^{\mathbf F}_K=\mathbf{B}^{\mathbf F}_{0,K}
\subset \mathbf{B}^{\mathbf F}_{\lambda_1,K}
\subset \mathbf{B}^{\mathbf F}_{\lambda_2,K}
\subset \bigcap_{k\in K}\mathbf{B}^{\mathbf F}_{2\varepsilon_k}(S_k).
\]

Let \(K\subset \mathbb{N}^+\) be nonempty and finite, \(\lambda\ge 0\), and assume \(\mathbf{B}^{\mathbf F}_{\lambda,K}\neq\emptyset\). For each
\[
V\in \overline{\mathbf{B}}^{\mathbf F}_{\lambda,K}
:= \bigcap_{\lambda'>\lambda}\mathbf{B}^{\mathbf F}_{\lambda',K},
\]
define the gradient flow \(\{\phi^V_{\lambda,K}(\cdot,t)\}_{t\ge 0}\subset\mathrm{Diff}(\overline{B}^{p(n-d)+1})\) generated by the vector field
\[
u \longmapsto
-\,\mathbf{F}\!\left((F^{\bar{k}(K)}_{u})_{\#}(V),\ (\mathbf{B}^{\mathbf F}_{\lambda+1,K})^c\right)\,\nabla A^V_{\bar{k}(K)}(u),
\]
where \(A^V_{\bar{k}(K)}(u):=\bigl\|(F^{\bar{k}(K)}_{u})_{\#}(V)\bigr\|(M)\) for \(u\in\overline{B}^{p(n-d)+1}\).

We now prove two lemmas. The first asserts that the flow decreases mass \emph{uniformly}, provided that we maintain a uniform distance from the set where \(\nabla A^V_{\bar{k}(K)}=0\).

\begin{lemma}\label{lem:gradflow}
Let \(\lambda>0\) and \(K\subset\mathbb N^+\) be finite, and take \(\eta>0\) sufficiently small. Then there exist constants \(c(\lambda,K)>0\) and \(T>0\) such that the following holds: for any \(V\in\overline{\mathbf{B}}^{\mathbf F}_{\lambda,K}\) and any \(v\in\overline{B}^{p(n-d)+1}\) with \(|v-m_{\bar{k}(K)}(V)|\ge \eta\) and
\[
(F_v^{\bar{k}(K)})_\#(V)\in \overline{\mathbf{B}}^{\mathbf F}_{\lambda+0.5,K},
\]
we have
\[
A^V_{\bar{k}(K)}\bigl(\phi^V_{\lambda,K}(v,T)\bigr)
<
A^V_{\bar{k}(K)}\bigl(\phi^V_{\lambda,K}(v,0)\bigr)-c(\lambda,K).
\]
\end{lemma}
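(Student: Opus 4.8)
The plan is to extract uniform constants from the compactness and the quantitative gradient estimate in Lemma~\ref{lem:grad}, then integrate. First I would record the structure of the flow: by definition $\phi^V_{\lambda,K}(\cdot,t)$ is the integral flow of $u\mapsto -f^V(u)\,\nabla A^V_{\bar k(K)}(u)$ with $f^V(u)=\mathbf F\bigl((F^{\bar k(K)}_u)_\#(V),(\mathbf B^{\mathbf F}_{\lambda+1,K})^c\bigr)\ge 0$. Along the trajectory starting at $v$, as long as the trajectory stays inside $\overline{\mathbf B}^{\mathbf F}_{\lambda+1,K}$ the function $f^V$ is strictly positive; concretely I want a uniform lower bound $f^V\ge \delta_0(\lambda,K)>0$ on the relevant region, which follows because $\overline{\mathbf B}^{\mathbf F}_{\lambda+0.5,K}$ has uniform $\mathbf F$-distance at least some $\delta_0$ from $(\mathbf B^{\mathbf F}_{\lambda+1,K})^c$ (this is just the triangle inequality with the radii $(2-2^{-\lambda-0.5})\varepsilon_k$ versus $(2-2^{-\lambda-1})\varepsilon_k$, using that $K$ is finite so there are finitely many $\varepsilon_k$ to worry about). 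Then $\phi^V_{\lambda,K}(v,\cdot)$ is, up to reparametrization, a gradient flow of $A^V_{\bar k(K)}$ with strictly positive speed factor, so Lemma~\ref{lem:grad} applies directly to give
\[
A^V_{\bar k(K)}\bigl(\phi^V_{\lambda,K}(v,t)\bigr)-A^V_{\bar k(K)}(v)\le -\tfrac{c_{0,\bar k(K)}}{2}\,\bigl|\phi^V_{\lambda,K}(v,t)-v\bigr|^2
\]
for every $t$ for which the trajectory remains in the region where $f^V>0$ and does not hit $m_{\bar k(K)}(V)$.

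The heart of the argument is then a dichotomy over a fixed time $T$, chosen uniformly, which I would argue as follows. Either the trajectory $\{\phi^V_{\lambda,K}(v,t):0\le t\le T\}$ leaves the smaller ball where it is guaranteed to stay close to $v$ — in which case $\bigl|\phi^V_{\lambda,K}(v,t_0)-v\bigr|$ is bounded below by a fixed amount at some $t_0\le T$, and the displayed inequality gives a uniform drop in $A^V_{\bar k(K)}$; or the trajectory stays in a small ball around $v$, in which case we are bounded away from $m_{\bar k(K)}(V)$ (since $|v-m_{\bar k(K)}(V)|\ge\eta$ by hypothesis and the ball has radius $<\eta/2$), so $|\nabla A^V_{\bar k(K)}|$ is uniformly bounded below there by strict concavity — indeed $\langle\nabla A^V_{\bar k(K)}(u),u-m\rangle\le -c_0|u-m|^2$ from the computation inside the proof of Lemma~\ref{lem:grad} forces $|\nabla A^V_{\bar k(K)}(u)|\ge c_0|u-m|\ge c_0\eta/2$. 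Combined with the lower bound $f^V\ge\delta_0$ this gives $\tfrac{d}{dt}A^V_{\bar k(K)}(\phi^V_{\lambda,K}(v,t))=-f^V|\nabla A^V_{\bar k(K)}|^2\le -\delta_0(c_0\eta/2)^2$, and integrating over $[0,T]$ again yields a uniform drop, provided $T$ is chosen large enough that $T\cdot\delta_0(c_0\eta/2)^2$ exceeds the threshold from the first alternative. Taking $c(\lambda,K)$ to be the minimum of the two drops and $T$ as just described completes the proof.

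The uniformity of all constants over $V\in\overline{\mathbf B}^{\mathbf F}_{\lambda,K}$ is what makes this nontrivial, and it is where I expect the main bookkeeping: the Hessian bounds $c_{0,\bar k(K)}$ and the smooth dependence of $A^V$ on $V$ are uniform because $\bar k(K)$ is a single fixed index (the minimum of the finite set $K$), so there is really only one quadruple $(\varepsilon_{\bar k(K)},c_{0,\bar k(K)},\{F^{\bar k(K)}_v\},m_{\bar k(K)})$ in play; the distance function $f^V$ depends continuously on $V$ and $u$ and is bounded below by the triangle-inequality computation above; and the radius of the "small ball" alternative can be fixed independently of $V$. One subtlety to handle carefully is that the trajectory might hit $m_{\bar k(K)}(V)$, but this is excluded in the second alternative by the $\eta$-separation hypothesis and the smallness of the ball, and in the first alternative we only use the inequality up to the first exit time, before which Lemma~\ref{lem:grad}'s hypothesis (integral curve disjoint from $m(V)$) is met. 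Finally, one should note $\eta$ must be taken small enough that $\eta/2$ is less than the radius furnished by the local quantitative control, which is the meaning of "$\eta>0$ sufficiently small" in the statement.
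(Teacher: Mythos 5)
Your overall strategy (uniform constants from compactness plus the quantitative estimate of Lemma~\ref{lem:grad}, then a dichotomy over a fixed time $T$) is reasonable, but it proves a weaker statement than the one needed, and the weakening is fatal downstream. In both branches of your dichotomy the mass drop you extract is controlled by $\eta$: in the first branch the displacement is only bounded below by the radius of your small ball, which you take to be less than $\eta/2$, so the drop is at most of order $c_0\eta^2$; in the second branch the drop is $T\,\delta_0(c_0\eta/2)^2$. Either way your $c$ is really $c(\lambda,K,\eta)$ and degenerates as $\eta\to 0$. The lemma, as written and as used, requires $c(\lambda,K)$ to be independent of $\eta$ (only $T$ may depend on $\eta$): in Lemma~\ref{lem:homotopy} one must first fix $\delta<c(\lambda,K)/4$, and only afterwards does Corollary~\ref{unsthomotopycrit} produce the $\eta=\eta(\delta,K,\Theta,\lambda)$ with which Lemma~\ref{lem:gradflow} is invoked; if $c$ shrinks with $\eta$, this choice becomes circular. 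The paper avoids this by a different mechanism: it introduces the sets $D_l(V)=\{v:(F^{\bar k(K)}_v)_\#V\in\overline{\mathbf B}^{\mathbf F}_{\lambda+l,K}\}$, shows by compactness that $d_0:=\inf_V\operatorname{dist}(D_{0.5}(V),\partial D_1(V))>0$, and observes that the trajectory, run to $t=\infty$, must travel from $D_{0.5}(V)$ all the way to $\partial D_1(V)$ (where the speed factor vanishes), so Lemma~\ref{lem:grad} gives an asymptotic drop $\ge \tfrac{c_0}{2}d_0^2$ \emph{independent of} $\eta$. The hypothesis $|v-m_{\bar k(K)}(V)|\ge\eta$ is then used only in a final compactness-contradiction argument to show this drop is essentially achieved by a uniform finite time $T(\eta,\lambda,K)$. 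Your dichotomy cannot be repaired by simply enlarging the ball radius to $d_0$, because the second branch needs the radius to be below $\eta/2$ to stay away from $m_{\bar k(K)}(V)$, and $\eta$ may be much smaller than $d_0$.

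A secondary, fixable point: in the second branch you invoke $f^V\ge\delta_0$ along the whole trajectory on $[0,T]$, but your triangle-inequality bound for $f^V$ was established only at points $u$ with $(F^{\bar k(K)}_u)_\#V\in\overline{\mathbf B}^{\mathbf F}_{\lambda+0.5,K}$, and the trajectory staying in a small Euclidean ball around $v$ does not by itself keep $(F^{\bar k(K)}_{u(t)})_\#V$ in that set. You need the uniform continuity of $(u,V)\mapsto (F^{\bar k(K)}_u)_\#V$ on the compact set $\overline B^{p(n-d)+1}\times\overline{\mathbf B}^{\mathbf F}_{\lambda,K}$ to choose the ball radius so that the image stays, say, in $\overline{\mathbf B}^{\mathbf F}_{\lambda+0.75,K}$, which is still uniformly separated from $(\mathbf B^{\mathbf F}_{\lambda+1,K})^c$. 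You gesture at this but do not make it explicit.
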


\begin{proof}
We may assume \(\overline{\mathbf{B}}^{\mathbf F}_{\lambda,K}\neq\emptyset\), otherwise the statement is trivial. Note that \(\overline{\mathbf{B}}^{\mathbf F}_{\lambda,K}\) is compact.

For \(l\in\mathbb R\), set
\[
D_l(V):=\bigl\{\,v\in\overline{B}^{p(n-d)+1}:\ (F_v^{\bar{k}(K)})_\#(V)\in \overline{\mathbf{B}}^{\mathbf F}_{\lambda+l,K}\,\bigr\}
\]
and
\[
d(V):=\operatorname{dist}_{\mathrm{Euc}}\bigl(D_{0.5}(V),\,\partial D_1(V)\bigr).
\]
Each \(D_l(V)\) is closed for \(l>0\) because \(v\mapsto (F_v^{\bar{k}(K)})_\#(V)\) is continuous in the varifold topology.

\textbf{Step 1.} We show \(d(V)\) has a uniform positive lower bound.

\emph{Part 1: \(d(V)>0\).}  
If \(w\in\partial D_1(V)\), then by continuity either \(w\in\partial\overline{B}^{p(n-d)+1}=\mathbb S^{p(n-d)}\) or \((F_w^{\bar{k}(K)})_\#(V)\in \partial\overline{\mathbf{B}}^{\mathbf F}_{\lambda+1,K}\). The first case is excluded by Lemma~\ref{lem:disjoint}. In the second case, since \(\partial\overline{\mathbf{B}}^{\mathbf F}_{\lambda+1,K}\cap \overline{\mathbf{B}}^{\mathbf F}_{\lambda+0.5,K}=\emptyset\), we must have \(w\notin D_{0.5}(V)\). Thus \(d(V)>0\).

\emph{Part 2: Uniform positivity.}  
By compactness there is a uniform lower bound: if not, there would exist \(V_i\to V\) in \(\overline{\mathbf{B}}^{\mathbf F}_{\lambda+0.5,K}\) with \(d(V_i)\to0\). Compactness yields points \(v_{i,0.5}\in D_{0.5}(V_i)\) and \(v_{i,1}\in\partial D_1(V_i)\) achieving \(d(V_i)=|v_{i,0.5}-v_{i,1}|\). Passing to a subsequence, \(v_{i,0.5},v_{i,1}\to v\in\overline{B}^{p(n-d)+1}\). Since \(D_l(V_i)\) and \(\partial D_l(V_i)\) are closed for \(l>0\), we get \(v\in D_{0.5}(V)\cap\partial D_1(V)\), hence \(d(V)=0\), a contradiction. Therefore \(d_0:=\inf_{V}d(V)>0\).

\textbf{Step 2.} Define
\[
P:=\bigl\{(V,v)\in \overline{\mathbf{B}}^{\mathbf F}_{\lambda,K}\times \overline{B}^{p(n-d)+1}:\ v\neq m_{\bar{k}(K)}(V),\ v\in D_{0.5}(V)\bigr\}.
\]
For any \((V,v)\in P\), the function \(t\mapsto A^V_{\bar{k}(K)}\bigl(\phi^V_{\lambda,K}(v,t)\bigr)\) is strictly decreasing for \(t>0\). Set
\[
\delta A_{\lambda,K}(V,v)
:=\lim_{t\to\infty}\bigl(A^V_{\bar{k}(K)}(\phi^V_{\lambda,K}(v,0))-
A^V_{\bar{k}(K)}(\phi^V_{\lambda,K}(v,t))\bigr)>0.
\]
We claim there exists \(c=c(\lambda,K)>0\) such that \(\delta A_{\lambda,K}(V,v)\ge 2c\) for all \((V,v)\in P\). Indeed, by Lemma~\ref{lem:grad},
\[
A^V_{\bar{k}(K)}(\phi^V_{\lambda,K}(v,0))-
A^V_{\bar{k}(K)}(\phi^V_{\lambda,K}(v,t))
\ \ge\ \frac{c_{0,\bar{k}(K)}}{2}\,\bigl|\phi^V_{\lambda,K}(v,0)-\phi^V_{\lambda,K}(v,t)\bigr|^2.
\]
Since \(\phi^V_{\lambda,K}(v,t)\) approaches \(\partial D_{\mathrm{1}}(V)\) as \(t\to\infty\), we obtain
\[
\delta A_{\lambda,K}(V,v)\ \ge\ d(V)^2\ \ge\ d_0^2>0.
\]

\textbf{Step 3.} It remains to find \(T=T(\eta,K,\lambda)>0\) so that for all \((V,v)\in P\) with \(|v-m_{\bar{k}(K)}(V)|\ge \eta\),
\[
A^V_{\bar{k}(K)}\bigl(\phi^V_{\lambda,K}(v,T)\bigr)
<
A^V_{\bar{k}(K)}\bigl(\phi^V_{\lambda,K}(v,0)\bigr)-c(\lambda,K).
\]
Arguing by contradiction as in \cite[Lemma~4.5]{MarquesNeves}, assume there exist \((V_i,v_i)\in P\) with \(|v_i-m_{\bar{k}(K)}(V_i)|\ge \eta\) such that
\[
A^{V_i}_{\bar{k}(K)}\bigl(\phi^{V_i}_{\lambda,K}(v_i,t)\bigr)
\ \ge\
A^{V_i}_{\bar{k}(K)}\bigl(\phi^{V_i}_{\lambda,K}(v_i,0)\bigr)-c(\lambda,K)
\quad\text{for all }t\in[0,i].
\]
By compactness, \(V_i\to V\) and \(v_i\to v\) with \((V,v)\in P\) and \(|v-m_{\bar{k}(K)}(V)|\ge \eta\). Passing to the limit yields
\[
A^{V}_{\bar{k}(K)}\bigl(\phi^{V}_{\lambda,K}(v,t)\bigr)
\ \ge\
A^{V}_{\bar{k}(K)}\bigl(\phi^{V}_{\lambda,K}(v,0)\bigr)-c(\lambda,K)
\quad\text{for all }t\ge 0,
\]
which contradicts Step~2. The lemma follows.
\end{proof}

Of course, we need a way to perturb ``bad'' $d$-varifolds away from the critical set $m_{\bar k (K)}(V)$. We will use a construction of Marques--Neves to do this.

Notice that, while defining and proving the perturbation, our setting is slightly different from that of \cite[Theorem~5.1]{MarquesNeves}. We shall prove it in sufficient generality to encompass both situations.

The basic topological idea is the following. We are given a map $\theta:X^l\rightarrow B^{N+1}$ from an $l$-dimensional simplicial complex to a $N+1$-dimensional ball, which we want to uniformly avoid, for some positive integers $l$ and $N$. Since the ball is convex, we can define a homotopy between the constant $0$ map and any other map into $B^{N+1}$ by linear interpolation up to time $t=1$.

In defining the homotopy at $t=1$, we need to construct a map $b:X^l\rightarrow B^{N+1}$ that maps each face $f$ into a small $N+1$-dimensional ball $B^{N+1}(p_f)$ centered at a point $p_f$, possibly with the  point $\{0\}$ removed. As is standard in this context, we construct $b$ by induction over the skeleta of increasing dimension. Recall from \cite[Theorem~7.1]{DavisKirk} that we can extend this function continuously from the boundary of a $(j+1)$-face $e$ to $e$ if and only if the corresponding obstruction cocycle $\alpha\in C^{j+1}\bigl(e,\pi_j(B^{N+1}(p_e)\setminus\{0\})\bigr)$ vanishes.

To guarantee this, notice that $\pi_j(B^{N+1}(p_e)\setminus\{0\})$ is either equal to $0$ (if $0\not\in B^{N+1}(p_e)$) or to $\pi_j(\mathbb S^{N})$ otherwise. Therefore, if $j<N$ all obstruction cocycles vanish, and so we may extend.

\begin{lemma}\label{unsthomotopy}
Let $N>0$ be an integer, let $V\in\mathcal{IV}_d(M)$ be a varifold, and let $\varepsilon>0$. Suppose we are given
\begin{enumerate}
    \item $l\in\{0,\dots,N\}$ and $\delta>0$;
    \item an $l$-dimensional finite simplicial complex $X^l$;
    \item a continuous map $\Psi:X^l\rightarrow \overline{\mathbf{B}}^\mathbf{F}_{2\varepsilon}(V)$;
    \item a continuous map $\phi:\overline{\mathbf{B}}^\mathbf{F}_{2\varepsilon}(V)\rightarrow \overline{B}^{N+1}$.
\end{enumerate}
Then there exists a homotopy $\hat H:X^l\times[0,1]\rightarrow \overline B^{N+1}_\delta$ such that
\begin{enumerate}
    \item $\hat H(x,0)=0$ for all $x\in X^l$;
    \item $\inf_{x\in X^l}\bigl|\phi(\Psi(x))-\hat H(x,1)\bigr|\geq \eta>0$
\end{enumerate}
for some constant $\eta>0$.
\end{lemma}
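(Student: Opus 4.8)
The plan is to construct $\hat H$ by the linear-interpolation-to-time-$1$ trick described in the preamble, with the time-$1$ map $b:X^l\to\overline B^{N+1}_\delta$ built by induction over skeleta using the obstruction-theoretic observation that $\pi_j(B^{N+1}(p_e)\setminus\{0\})=0$ for $j<N$. Set $\theta:=\phi\circ\Psi:X^l\to\overline B^{N+1}$; this is the map we wish to avoid. First I would fix a small radius $r=r(\delta)>0$ (to be pinned down) and, for each face, a ``target point'' construction: we will define $b$ so that $b(f)\subset B^{N+1}_r(p_f)$ for some $p_f$ depending on the face, and crucially so that the image stays a definite distance from $\theta(x)$ for $x$ in that face. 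Then $\hat H(x,t):=t\,b(x)$ is the desired homotopy, with $\hat H(x,0)=0$ automatically, and property (2) following from the uniform separation of $b$ from $\theta$.

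For the inductive construction of $b$: on the $0$-skeleton, for each vertex $x$ pick $b(x)\in\overline B^{N+1}_\delta$ with $|b(x)-\theta(x)|\ge\eta_0$ for a fixed small $\eta_0>0$ — possible since $\overline B^{N+1}_\delta$ has dimension $N+1\ge l+1\ge1$, so it is not a single point and we can always move off $\theta(x)$ by a definite amount (choosing, say, the antipodal direction from $\theta(x)$ scaled to radius $\delta/2$). Suppose $b$ is defined on the $j$-skeleton with the separation property $|b(x)-\theta(x)|\ge\eta_j$ there, and let $e$ be a $(j+1)$-face with $j+1\le l\le N$. By continuity and compactness, $\theta(e)$ is contained in a ball $B^{N+1}_\rho(p_e)$ of small radius $\rho$; shrinking, we may assume $\overline B^{N+1}_\rho(p_e)$ misses the value $b|_{\partial e}$ by a definite amount if $\eta_j$ was chosen large relative to $\rho$ — but more robustly, we choose a point $q_e\in\overline B^{N+1}_\delta$ at distance $\ge 2\rho$ from $p_e$, and we seek to extend $b|_{\partial e}$ over $e$ into the punctured ball $B^{N+1}_{2\rho}(q_e)\setminus\{\theta\text{-values}\}$. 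Since $\theta(e)$ lies in a $\rho$-ball around $p_e$ which is disjoint from $B^{N+1}_{2\rho}(q_e)$... \emph{(here is where the actual Marques--Neves argument is used)}: one centers the target ball $B^{N+1}(p_e)$ at $p_e$ itself, possibly punctured at one point, and invokes that $\pi_j\bigl(B^{N+1}(p_e)\setminus\{\mathrm{pt}\}\bigr)=\pi_j(\mathbb S^N)=0$ for $j<N$, so the obstruction cocycle in $C^{j+1}\bigl(e,\pi_j(B^{N+1}(p_e)\setminus\{0\})\bigr)$ vanishes and $b|_{\partial e}$ extends continuously over $e$ with image in that punctured ball, hence avoiding the one deleted point. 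Choosing the deleted point so that $\theta(e)$ collapses appropriately (as in \cite[Theorem~5.1]{MarquesNeves}) gives the separation on $e$.

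\textbf{The main obstacle} is bookkeeping the two competing constraints simultaneously through the induction: the image of $b$ must stay inside the \emph{fixed small} ball $\overline B^{N+1}_\delta$ (so the radii $\rho$ of the face-balls, and the jumps between $b|_{\partial e}$ and its extension, must all be controlled by $\delta$ from the start), while the separation constants $\eta_j$ from $\theta$ must not degenerate to $0$ as $j$ increases through the finitely many skeleta. Both are handled because $X^l$ is a \emph{finite} simplicial complex and $\theta$ is uniformly continuous: one fixes, before starting, a single modulus-of-continuity scale $\rho_0=\rho_0(\delta,\theta)$ small enough that $\theta$ maps each face into a $\rho_0$-ball and $10\rho_0<\delta$, runs the induction keeping every intermediate ball of radius $\le\rho_0$, and at the end takes $\eta:=\min_j\eta_j>0$ over the finitely many skeletal stages. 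The hypothesis $l\le N$ is exactly what makes every relevant $j$ satisfy $j<N$, so no nonvanishing obstruction ever arises; this is the one place the dimensional bound is essential, and it is the crux that must be gotten right.
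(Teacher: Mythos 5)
Your overall architecture matches the paper's proof: subdivide finely using uniform continuity of $\phi\circ\Psi$ and finiteness of $X^l$, build the time-$1$ map by induction over skeleta, use $\pi_j(\mathbb S^N)=0$ for $j<N$ (hence for all $j\le l-1\le N-1$) to kill the obstruction cocycles, and take $\hat H(x,t)=t\,a(x)$. But there is a genuine gap at the crux of the inductive step. You extend $b|_{\partial e}$ into a ball \emph{punctured at a single point} and then assert that ``choosing the deleted point so that $\theta(e)$ collapses appropriately'' yields separation from $\theta=\phi\circ\Psi$ on $e$. This does not work as stated: the set you must avoid over the face $e$ is the graph condition $a(x)\neq\theta(x)$, where $\theta(x)$ \emph{moves} with $x$, and no single deleted point in the target realizes it. Your attempted workaround with the auxiliary center $q_e$ at distance $2\rho$ from $p_e$ is abandoned mid-argument and in any case cannot be made compatible with the boundary data $b|_{\partial e}$, which was not constructed to lie near any such $q_e$.

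The missing idea is the translation (or ``graph'') trick: one does not build $a$ directly, but rather $b(x):=a(x)-\phi(\Psi(x))$, required to take values in $B^{N+1}_\delta\bigl(-\phi(\Psi(x))\bigr)\setminus\{0\}$; then $a:=b+\phi\circ\Psi$ lands in $B^{N+1}_\delta$ and satisfies $a(x)\neq\phi(\Psi(x))$ everywhere. In these coordinates the point to avoid is the \emph{fixed} point $0$ for every face, the target for each face is a small ball (of radius growing by $2\alpha$ per skeletal stage, with $\alpha\le\delta2^{-(N+2)}$ so the radii stay below $\delta$) minus $\{0\}$, and that punctured ball has vanishing $\pi_j$ for $j<N$, so the obstruction-theoretic extension applies verbatim. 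Finally, the uniform constant $\eta$ is not obtained by tracking constants $\eta_j$ through the induction (the extension step gives only pointwise nonvanishing, not a quantitative bound); it comes at the very end from compactness of $X^l$ and continuity of the nowhere-zero function $x\mapsto|b(x)|=|a(x)-\phi(\Psi(x))|$. Your 0-skeleton step is fine (finitely many vertices), but note that ``the antipodal direction from $\theta(x)$'' is undefined when $\theta(x)=0$, which the translation trick also sidesteps.
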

\begin{remark}
    This lemma could actually be stated without referring to varifolds at all, and only in terms of continuous maps from $l$-dimensional simplicial complexes to $B^{i}$ with $i>l$ (see the discussion prior to the lemma). However, the situation we care about is when $\phi=m_{\bar k(K)}$, so it is convenient to phrase it in this way.
\end{remark}
\begin{proof}
    We start with the same reduction as in \cite[Theorem~5.1]{MarquesNeves}: by repeated barycentric subdivision of the finite simplicial complex and by the uniform continuity of $\phi\circ\Psi$ and $\Psi$, we may assume that, for each closed face $e\subseteq X^l$, it holds that $\operatorname{diam}_\mathbf{F}\bigl(\Psi(e)\bigr)<\alpha$ and, if $x,y$ lie in the same face of $X^l$, then
    \begin{equation*}
        \bigl|\phi(\Psi(x))-\phi(\Psi(y))\bigr|<\alpha.
    \end{equation*}
    Here $\alpha=\min\{\delta 2^{-(N+2)},\,\varepsilon/4\}$.

    Now let $(X^l)^{(i)}$ denote the $i$-skeleton. It is clear that, if we define a map $a:X^l\rightarrow B^{N+1}_\delta$ satisfying
    \[
        \inf_{x\in X^l}\bigl|\phi(\Psi(x))-a(x)\bigr|\geq \eta>0,
    \]
    then we can define the required homotopy by setting $\hat H(x,t)=t\,a(x)$.

    We also notice that it is enough to define a function $b:X^l\rightarrow\mathbb R^{N+1}\setminus\{0\}$ such that
    \[
        b(x)\in B^{N+1}_\delta\bigl(-\phi(\Psi(x))\bigr),
    \]
    and then define $a=b+\phi\circ\Psi$. We will define this $b$ inductively on the skeleta, as we will do later in the proof of the Deformation theorem.

    Clearly, we can define
    \[
        b:(X^l)^{(0)}\rightarrow B^{N+1}_{\delta 2^{-N}}\bigl(-\phi(\Psi(x))\bigr)\setminus\{0\}
    \]
    on the $0$-skeleton.

    Suppose we have defined
    \[
        b:(X^l)^{(j)}\rightarrow B^{N+1}_{\delta 2^{-N+j}}\bigl(-\phi(\Psi(x))\bigr)\setminus\{0\}
    \]
    for some $j\in\{0,\dots,N-1\}$; we will construct it on the $(j+1)$-skeleton. Pick a $j$-face $e$ of $X^l$ with center $y_e$. Then, by construction,
    \[
        b:\partial e\rightarrow B^{N+1}_{\delta 2^{-N+j}+\alpha}\bigl(-\phi(\Psi(y_e))\bigr)\setminus\{0\}.
    \]

    Recall that the homotopy group $\pi_j(\mathbb S^N)=0$ since $j<N$. Therefore, all obstruction cocycles to extending $b$ to a map
    \[
        b:e\rightarrow B^{N+1}_{\delta 2^{-N+j}+\alpha}\bigl(-\phi(\Psi(y_e))\bigr)\setminus\{0\}
    \]
    vanish. Once again, by our subdivision we obtain
    \[
        b(x)\in B^{N+1}_{\delta 2^{-N+j}+2\alpha}\bigl(-\phi(\Psi(x))\bigr)\setminus\{0\},
    \]
    but by our choice of parameters we have $\delta 2^{-N+j}+2\alpha<\delta 2^{-N+j+1}$, and so the induction closes.
\end{proof}
As an immediate corollary we have the following.

\begin{corollary}\label{unsthomotopycrit}
Let \(\lambda>0\), and let \(K\subset \mathbb{N}^+\) be a nonempty finite set with \(\mathbf{B}^\mathbf{F}_{\lambda,K}\neq\emptyset\). Suppose also that we are given
\begin{enumerate}
    \item $l\in\{0,\dots,p(n-d)\}$ and $\delta>0$;
    \item an $l$-dimensional finite simplicial complex $X^l$;
    \item a continuous map $\Theta:X^l\rightarrow \overline{\mathbf{B}}^\mathbf{F}_{2\varepsilon_{\bar k(K)}}(S_{\bar k(K)})$.
\end{enumerate}
Then there exists a homotopy $\hat H:X^l\times[0,1]\rightarrow \overline B^{p(n-d)+1}_\delta$ such that
\begin{enumerate}
    \item $\hat H(x,0)=0$ for all $x\in X^l$;
    \item $\inf_{x\in X^l}\bigl|m_{\bar k(K)}(\Theta(x))-\hat H(x,1)\bigr|\geq \eta>0$
\end{enumerate}
for some constant $\eta>0$.
\end{corollary}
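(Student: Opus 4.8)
The plan is to deduce this directly from Lemma~\ref{unsthomotopy} by a suitable choice of parameters, so the proof is essentially a bookkeeping exercise. I would set $N := p(n-d)$, $V := S_{\bar k(K)}$, and $\varepsilon := \varepsilon_{\bar k(K)}$. The hypothesis $l\in\{0,\dots,p(n-d)\}$ is exactly $l\in\{0,\dots,N\}$, and the given complex $X^l$ together with $\delta>0$ play the roles of items (1) and (2) of the lemma. For item (3) I would take $\Psi := \Theta$; by hypothesis $\Theta$ maps $X^l$ into $\overline{\mathbf{B}}^\mathbf{F}_{2\varepsilon_{\bar k(K)}}(S_{\bar k(K)})=\overline{\mathbf{B}}^\mathbf{F}_{2\varepsilon}(V)$, as required.

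The one point needing a word of justification is item (4): I would take $\phi := m_{\bar k(K)}$ and check that this genuinely is a continuous map $\overline{\mathbf{B}}^\mathbf{F}_{2\varepsilon}(V)\to \overline B^{N+1}$. Since $S_{\bar k(K)}$ is $\bigl(p(n-d)+1\bigr)$-unstable in its $\varepsilon_{\bar k(K)}$-neighborhood, it carries an associated quadruple $(\varepsilon_{\bar k(K)}, c_{0,\bar k(K)}, \{F^{\bar k(K)}_v\}, m_{\bar k(K)})$; by the definition of $k$-instability, $m_{\bar k(K)}$ is defined on all of $\overline{\mathbf{B}}^\mathbf{F}_{2\varepsilon}(V)$ and takes values in $\overline B^{p(n-d)+1}_{c_{0,\bar k(K)}/\sqrt{10}}(0)\subset \overline B^{p(n-d)+1}=\overline B^{N+1}$. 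Continuity follows from the remarks after the definition of $k$-instability: if $V_i\to V$ in the $\mathbf F$-topology then $A^{V_i}_{\bar k(K)}\to A^{V}_{\bar k(K)}$ in the smooth topology, so the unique maxima converge, $m_{\bar k(K)}(V_i)\to m_{\bar k(K)}(V)$, and since the ball is metrizable this gives continuity of $\phi$ on $\overline{\mathbf{B}}^\mathbf{F}_{2\varepsilon}(V)$.

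With these choices, Lemma~\ref{unsthomotopy} outputs a homotopy $\hat H:X^l\times[0,1]\to \overline B^{N+1}_\delta=\overline B^{p(n-d)+1}_\delta$ satisfying $\hat H(x,0)=0$ for all $x\in X^l$ and $\inf_{x\in X^l}\bigl|\phi(\Psi(x))-\hat H(x,1)\bigr|=\inf_{x\in X^l}\bigl|m_{\bar k(K)}(\Theta(x))-\hat H(x,1)\bigr|\ge \eta>0$ for some $\eta>0$. These are exactly conclusions (1) and (2) of the corollary. I would note in passing that the hypotheses $\lambda>0$ and $\mathbf{B}^\mathbf{F}_{\lambda,K}\neq\emptyset$ are not used in this reduction; they are stated only to keep the corollary in the same notational framework as the Deformation theorem, where $m_{\bar k(K)}$ and its gradient flow $\phi^V_{\lambda,K}$ are subsequently used.

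Because every step is a verbatim specialization of Lemma~\ref{unsthomotopy}, I do not expect any genuine obstacle here. The only item that requires (minor) care is verifying that $\phi = m_{\bar k(K)}$ satisfies the hypotheses of the lemma — namely continuity and the containment of its image in $\overline B^{N+1}$ — and, as indicated above, both are immediate consequences of the definition of $\bigl(p(n-d)+1\bigr)$-instability and the smooth dependence of $A^V_{\bar k(K)}$ on $V$.
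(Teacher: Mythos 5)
Your proposal is correct and is exactly the intended argument: the paper states the corollary as an ``immediate'' specialization of Lemma~\ref{unsthomotopy} with $N=p(n-d)$, $V=S_{\bar k(K)}$, $\varepsilon=\varepsilon_{\bar k(K)}$, $\Psi=\Theta$, and $\phi=m_{\bar k(K)}$, whose continuity and range in $\overline B^{p(n-d)+1}_{c_{0,\bar k(K)}/\sqrt{10}}(0)\subset\overline B^{p(n-d)+1}$ follow from the definition of $(p(n-d)+1)$-instability and the remarks after it, just as you say. Your observation that $\lambda>0$ and $\mathbf{B}^\mathbf{F}_{\lambda,K}\neq\emptyset$ are not actually used in the reduction is also accurate.
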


We use this as a perturbation away from the critical set. Combining the two lemmas, we obtain:

\begin{lemma}\label{lem:homotopy}
Let \(\lambda>0\), and let \(K\subset \mathbb{N}^+\) be a nonempty finite set with \(\mathbf{B}^\mathbf{F}_{\lambda,K}\neq\emptyset\). Let \(c(\lambda,K)\) be the constant from Lemma~\ref{lem:gradflow}. For any \(\delta\in(0,c(\lambda,K)/4)\), any \(l\in\{0,1,\dots,p(n-d)\}\), and any continuous map
\[
\Theta:X^{l}\to \mathbf{B}^\mathbf{F}_{\lambda,K},
\]
where \(X^l\) is an \(l\)-dimensional finite simplicial complex, there exists a homotopy
\[
H^{\Theta,\delta}_{\lambda,K}:X^l\times[0,1]\to \mathbf{B}^\mathbf{F}_{\lambda+1,K}
\]
such that:
\begin{enumerate}
    \item \(H^{\Theta,\delta}_{\lambda,K}(\cdot,0)=\Theta(\cdot)\).
    \item For all \(x\in X^l\), \(\ \|H^{\Theta,\delta}_{\lambda,K}(x,1)\|(M)\le \|\Theta(x)\|(M)-\tilde c(\lambda,K)\).
    \item For all \(x\in X^l\) and \(t\in[0,1]\), \(\ \|H^{\Theta,\delta}_{\lambda,K}(x,t)\|(M)-\|\Theta(x)\|(M)\le \delta\).
\end{enumerate}
Here \(\tilde c(\lambda,K):=c(\lambda,K)/2\). More precisely:
\begin{itemize}
    \item For \(t\in[0,1/2]\), \(\ \mathbf{F}\bigl(H^{\Theta,\delta}_{\lambda,K}(x,t),\Theta(x)\bigr)\le \delta\).
    \item For \(t\in[1/2,1]\) there exists \(v(x)\in \overline{B}^{p(n-d)+1}\) such that
    \[
        H^{\Theta,\delta}_{\lambda,K}(x,t)=\phi^{\Theta(x)}_{\lambda,K}\bigl(v(x),(2t-1)T\bigr).
    \]
\end{itemize}
\end{lemma}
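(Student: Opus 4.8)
The plan is to construct $H^{\Theta,\delta}_{\lambda,K}$ as a concatenation of two homotopies joined at $t=\tfrac12$: on $[0,\tfrac12]$ a small displacement pushing the family off the parameter-dependent critical locus $\{v=m_{\bar k(K)}(\Theta(x))\}$, and on $[\tfrac12,1]$ the negative gradient flow $\phi^{\Theta(x)}_{\lambda,K}$ of $A^{\Theta(x)}_{\bar k(K)}$, which then lowers the mass by a definite amount. Since the constant $c(\lambda,K)$ produced by Lemma~\ref{lem:gradflow} may be replaced by any smaller positive number, I would first shrink it (keeping the dependence only on $\lambda,K$) so that the standing assumption $\delta<c(\lambda,K)/4$ forces $\delta$ below the gap $2^{-\lambda}(1-2^{-1/2})\min_{k\in K}\varepsilon_k$ between the radii defining $\mathbf{B}^{\mathbf{F}}_{\lambda,K}$ and $\overline{\mathbf{B}}^{\mathbf{F}}_{\lambda+0.5,K}$; as this gap is smaller than the one up to $\mathbf{B}^{\mathbf{F}}_{\lambda+1,K}$, every varifold within $\mathbf{F}$-distance $\delta$ of a point of $\mathbf{B}^{\mathbf{F}}_{\lambda,K}$ lies in $\overline{\mathbf{B}}^{\mathbf{F}}_{\lambda+0.5,K}\subset\mathbf{B}^{\mathbf{F}}_{\lambda+1,K}$.

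\textbf{Stage 1.} Using compactness of $\overline{\mathbf{B}}^{\mathbf{F}}_{2\varepsilon_{\bar k(K)}}(S_{\bar k(K)})$ and uniform continuity of $v\mapsto(F^{\bar k(K)}_v)_\#(\cdot)$, I would choose $r>0$ such that $|v|\le r$ implies $\mathbf{F}\bigl((F^{\bar k(K)}_v)_\#V,V\bigr)\le\delta$ and $\bigl|\,\|(F^{\bar k(K)}_v)_\#V\|(M)-\|V\|(M)\,\bigr|\le\delta$ for all $V$ in that ball. Since $\mathbf{B}^{\mathbf{F}}_{\lambda,K}\subset\overline{\mathbf{B}}^{\mathbf{F}}_{2\varepsilon_{\bar k(K)}}(S_{\bar k(K)})$, apply Corollary~\ref{unsthomotopycrit} to $\Theta$ with ball-radius parameter $\rho:=\min\{r,\delta\}$, obtaining $\hat H:X^l\times[0,1]\to\overline B^{p(n-d)+1}_{\rho}$ with $\hat H(\cdot,0)\equiv0$ and $\inf_x|m_{\bar k(K)}(\Theta(x))-\hat H(x,1)|\ge\eta_0>0$. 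Set $H^{\Theta,\delta}_{\lambda,K}(x,t):=(F^{\bar k(K)}_{\hat H(x,2t)})_\#\Theta(x)$ for $t\in[0,\tfrac12]$ and $v(x):=\hat H(x,1)$. By the choice of $r$ and the normalization of $c(\lambda,K)$ this gives $H^{\Theta,\delta}_{\lambda,K}(x,0)=\Theta(x)$, $\mathbf{F}(H^{\Theta,\delta}_{\lambda,K}(x,t),\Theta(x))\le\delta$, $\|H^{\Theta,\delta}_{\lambda,K}(x,t)\|(M)\le\|\Theta(x)\|(M)+\delta$, and $H^{\Theta,\delta}_{\lambda,K}(x,t)\in\overline{\mathbf{B}}^{\mathbf{F}}_{\lambda+0.5,K}\subset\mathbf{B}^{\mathbf{F}}_{\lambda+1,K}$ for $t\in[0,\tfrac12]$; moreover $|v(x)-m_{\bar k(K)}(\Theta(x))|\ge\eta_0$ and $H^{\Theta,\delta}_{\lambda,K}(x,\tfrac12)=(F^{\bar k(K)}_{v(x)})_\#\Theta(x)$.

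\textbf{Stage 2.} Choose $\eta\in(0,\eta_0]$ small enough for Lemma~\ref{lem:gradflow} to apply, and let $c(\lambda,K),T$ be the constants it supplies for this $\eta$ (recall $c(\lambda,K)$ depends only on $\lambda,K$, while $T$ may depend on $\eta$, hence on $\Theta$). For $t\in[\tfrac12,1]$ define $H^{\Theta,\delta}_{\lambda,K}(x,t):=\bigl(F^{\bar k(K)}_{\phi^{\Theta(x)}_{\lambda,K}(v(x),(2t-1)T)}\bigr)_\#\Theta(x)$, with the same $v(x)$ as in Stage~1. Joint continuity in $(x,t)$ follows from continuity of $(V,v,s)\mapsto\phi^V_{\lambda,K}(v,s)$ (the generating vector field is jointly continuous and locally Lipschitz in $v$, uniformly over the relevant compact set of $V$'s) together with continuity of $v(\cdot)$ and $\Theta$; since $\phi^{\Theta(x)}_{\lambda,K}(\cdot,0)=\operatorname{Id}$ the two stages agree at $t=\tfrac12$. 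Because the generating vector field vanishes wherever $(F^{\bar k(K)}_u)_\#\Theta(x)\notin\mathbf{B}^{\mathbf{F}}_{\lambda+1,K}$ and $H^{\Theta,\delta}_{\lambda,K}(x,\tfrac12)\in\overline{\mathbf{B}}^{\mathbf{F}}_{\lambda+0.5,K}$, a Gr\"onwall/barrier argument keeps the trajectory inside $\mathbf{B}^{\mathbf{F}}_{\lambda+1,K}$ for all finite times, so $H^{\Theta,\delta}_{\lambda,K}(x,t)\in\mathbf{B}^{\mathbf{F}}_{\lambda+1,K}$; being the negative gradient flow of $A^{\Theta(x)}_{\bar k(K)}$, it is mass-nonincreasing on $[\tfrac12,1]$, which with the Stage~1 bound $\|H^{\Theta,\delta}_{\lambda,K}(x,\tfrac12)\|(M)\le\|\Theta(x)\|(M)+\delta$ gives property~(3) there. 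Finally $|v(x)-m_{\bar k(K)}(\Theta(x))|\ge\eta$ and $H^{\Theta,\delta}_{\lambda,K}(x,\tfrac12)=(F^{\bar k(K)}_{v(x)})_\#\Theta(x)\in\overline{\mathbf{B}}^{\mathbf{F}}_{\lambda+0.5,K}$, so Lemma~\ref{lem:gradflow} yields $\|H^{\Theta,\delta}_{\lambda,K}(x,1)\|(M)<\|H^{\Theta,\delta}_{\lambda,K}(x,\tfrac12)\|(M)-c(\lambda,K)\le\|\Theta(x)\|(M)+\delta-c(\lambda,K)<\|\Theta(x)\|(M)-c(\lambda,K)/2$, which is property~(2) with $\tilde c(\lambda,K)=c(\lambda,K)/2$; properties~(1) and~(3) on $[0,\tfrac12]$ were already verified in Stage~1.

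The genuinely essential point — the reason the homotopy cannot be produced in a single step — is Stage~1: at a stationary varifold $\nabla A^V$ vanishes, so the gradient flow alone is static there, and one must first displace the \emph{entire family} $\Theta$ off the varying critical set $\{m_{\bar k(K)}(\Theta(x))\}$ while keeping it inside the prescribed $\mathbf{F}$-balls. This is exactly where the hypothesis $l\le p(n-d)$ enters, through the obstruction-theoretic extension of Corollary~\ref{unsthomotopycrit}/Lemma~\ref{unsthomotopy}: the extension obstructions lie in $\pi_j(\mathbb S^{p(n-d)})$ for $j<p(n-d)$ and hence vanish. The remaining difficulty is purely bookkeeping — choosing $\rho,\eta,\delta$ and $c(\lambda,K)$ so that each intermediate varifold lands in the correct nested ball and, crucially, so that the guaranteed mass drop $\tilde c(\lambda,K)$ is independent of $\Theta$, which holds because $c(\lambda,K)$ in Lemma~\ref{lem:gradflow} depends only on $\lambda$ and $K$ even though $T$ need not.
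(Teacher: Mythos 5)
Your proposal is correct and follows essentially the same route as the paper: arrange by compactness that sufficiently small pushforwards $(F^{\bar k(K)}_v)_\#$ stay within $\delta$ in mass and $\mathbf F$-distance (hence inside $\overline{\mathbf B}^{\mathbf F}_{\lambda+0.5,K}$), run the obstruction-theoretic homotopy of Corollary~\ref{unsthomotopycrit} on $[0,\tfrac12]$ to displace the family off the critical locus $m_{\bar k(K)}(\Theta(x))$, and concatenate with the gradient flow on $[\tfrac12,1]$, invoking Lemma~\ref{lem:gradflow} for the uniform mass drop. The only cosmetic difference is that you secure the containment in $\overline{\mathbf B}^{\mathbf F}_{\lambda+0.5,K}$ by renormalizing $c(\lambda,K)$, whereas the paper instead shrinks an auxiliary $\delta_0\in(0,\delta)$; both are harmless, and your added remarks on joint continuity and on the flow not exiting $\mathbf B^{\mathbf F}_{\lambda+1,K}$ are points the paper leaves implicit.
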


\begin{proof}
First, since \(\mathbf{F}\bigl(\overline{\mathbf{B}}^\mathbf{F}_{\lambda,K},\,\partial \overline{\mathbf{B}}^\mathbf{F}_{\lambda+0.5,K}\bigr)>0\), there exists \(\delta_0\in(0,\delta)\) such that for any \(V\in \overline{\mathbf{B}}^\mathbf{F}_{\lambda,K}\) and any varifold \(W\) with \(\mathbf{F}(V,W)\le \delta_0\), we have
\[
\|W\|(M)\le \|V\|(M)+\delta
\quad\text{and}\quad
W\in \overline{\mathbf{B}}^\mathbf{F}_{\lambda+0.5,K}.
\]
By compactness of \(\overline{\mathbf{B}}^\mathbf{F}_{\lambda,K}\) and \(\overline{B}^{p(n-d)+1}\), there exists \(\delta_1>0\) such that
\[
\mathbf{F}\bigl((F^{\bar k(K)}_{v})_\#(V),\,V\bigr)\le \delta_0<\delta
\]
for all \(V\in \overline{\mathbf{B}}^\mathbf{F}_{\lambda,K}\) and all \(v\in \overline{B}^{p(n-d)+1}(\delta_1)\).

Since for every \(x\in X^l\), \(\Theta(x)\in \overline{\mathbf{B}}^\mathbf{F}_{\lambda,K}\subset \overline{\mathbf{B}}^\mathbf{F}_{2\varepsilon_{\bar k(K)}}(S_{\bar k(K)})\), we may use Corollary~\ref{unsthomotopycrit} to obtain a homotopy
\[
\widehat H:X^l\times[0,1]\to \overline{B}^{p(n-d)+1}(\delta_1),
\]
with \(\widehat H(\cdot,0)=0\) and
\[
\inf_{x\in X^l}\bigl|\,m_{\bar k(K)}(\Theta(x))-\widehat H(x,1)\,\bigr|\ \ge \ \eta>0,
\]
for some \(\eta=\eta(\delta,K,\Theta,\lambda)>0\). This homotopy uniformly avoids the set where \(\nabla A^{V}_{\bar k(K)}=0\), while remaining close to the initial point.

Finally, applying Lemma~\ref{lem:gradflow} with \(v=\widehat H(x,1)\) for \(\Theta(x)\), define
\[
H^{\Theta,\delta}_{\lambda,K}(x,t)=
\begin{cases}
\bigl(F^{\bar k(K)}_{\widehat H(x,2t)}\bigr)_{\#}\,\Theta(x), & t\in[0,1/2],\\[2pt]
\phi^{\Theta(x)}_{\lambda,K}\bigl(\widehat H(x,1),(2t-1)T\bigr), & t\in(1/2,1].
\end{cases}
\]
By the choices of \(\delta_0,\delta_1\) and the conclusion of Lemma~\ref{lem:gradflow}, this homotopy satisfies all three properties stated above.
\end{proof}

\begin{theorem}[Deformation Theorem]\label{Thm:deform}
Let \(\{\Phi_i\}_{i\in\mathbb N}\) be a min--max sequence for \(\omega_p^d\), and let \(X_i=\operatorname{dmn}(\Phi_i)\) be its domain with \(\dim X_i\le p(n-d)\). After passing to a subsequence, there exists a sequence \(\{\Psi_i\}_{i\in\mathbb N}\) such that:
\begin{enumerate}
    \item \(\Psi_i\) is homotopic to \(\Phi_i\) in the \(\mathbf F\)-topology;
    \item \(\mathbf L\bigl(\{\Psi_i\}_{i\in\mathbb N}\bigr)=\omega^d_p\);
    \item There exists a function \(\bar\varepsilon:\mathcal U(\omega^d_p)\to(0,\infty)\) with the following property: for every \(S\in\mathcal U(\omega^d_p)\) there exists \(i_0\in\mathbb N\) such that, for all \(i\ge i_0\),
    \[
        \bigl|\Psi_i\bigr|(X_i)\ \cap\ \mathbf B^{\mathbf F}_{\bar\varepsilon(S)}(S)\;=\;\emptyset .
    \]
    In particular, \(\mathbf C(\{\Psi_i\})\cap \mathcal{APR}_{p,d} \subset \mathcal S(\omega^d_p)\).
\end{enumerate}
\end{theorem}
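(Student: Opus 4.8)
The plan is to follow the deformation argument of Marques--Neves \cite{MarquesNeves}, in the hierarchical form of \cite{yangyangli,song_hierarchical}, with Lemmas~\ref{lem:union_compact}--\ref{lem:homotopy} as the analytic input. The first step is to replace the (possibly non-discrete) set $\mathcal{U}(\omega^d_p)$ by a countable family. By Lemma~\ref{lem:union_compact} it is a countable union of compacts $\mathcal{U}_i(\omega^d_p)$, and by Lemma~\ref{lem:nearunstable} every stationary integral varifold $\mathbf F$-close to a point of $\mathcal{U}(\omega^d_p)$ is again $\bigl(p(n-d)+1\bigr)$-unstable; covering each $\mathcal{U}_i(\omega^d_p)$ by finitely many balls $\mathbf B^{\mathbf F}_{\varepsilon(S)/4}(S)$ (with $\varepsilon(S)$ the instability radius, taken to be the $\varepsilon_0$ of Lemma~\ref{lem:disjoint}) and enumerating the centres yields $\{S_k\}_{k\ge1}$ with quadruples $(\varepsilon_k,c_{0,k},\{F^k_v\},m_k)$ and $\mathcal{U}(\omega^d_p)\subset\bigcup_k\mathbf B^{\mathbf F}_{\varepsilon_k/4}(S_k)$. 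After passing to a subsequence I may assume $\sup_{X_i}\mathbf M(\Phi_i)\le\omega^d_p+2^{-i}$, and I fix $Q(i)\nearrow\infty$. It then suffices to produce $\Psi_i$ homotopic to $\Phi_i$, with total mass increase $o(1)$, whose image misses $\mathbf B^{\mathbf F}_{\varepsilon_k/2}(S_k)$ for every $k\le Q(i)$: for $S\in\mathcal{U}(\omega^d_p)$ one picks $k_0$ with $\mathbf F(S,S_{k_0})<\varepsilon_{k_0}/4$, sets $\bar\varepsilon(S):=\varepsilon_{k_0}/4$, and observes that $\mathbf B^{\mathbf F}_{\bar\varepsilon(S)}(S)\subset\mathbf B^{\mathbf F}_{\varepsilon_{k_0}/2}(S_{k_0})$ is avoided once $Q(i)\ge k_0$; then (1) and (2) follow (a homotopy of $p$-sweepouts is a $p$-sweepout, and $\omega^d_p\le\sup_{X_i}\mathbf M(\Psi_i)\le\omega^d_p+2^{-i}+o(1)$), and the ``in particular'' holds because a limit $S=\lim_j|\Psi_{i_j}(x_j)|\in\mathcal{APR}_{p,d}$ of index $\ge p(n-d)+1$ would violate the avoidance for large $i_j$.

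To build $\Psi_i$, fix $i$, work with the first $Q(i)$ balls, and (by repeated barycentric subdivision of $X_i$, using uniform continuity of $\Phi_i$) arrange that every closed face $\sigma$ has tiny $\mathbf F$-diameter under $\Phi_i$, so that its \emph{type} $K(\sigma):=\{k\le Q(i): |\Phi_i|(\bar\sigma)\subset\mathbf B^{\mathbf F}_{\varepsilon_k}(S_k)\}$ satisfies $K(\tau)\supseteq K(\sigma)$ whenever $\tau\subseteq\bar\sigma$. I would construct a homotopy $H_i:X_i\times[0,1]\to\mathcal{Z}_d(M;\mathbb Z_2)$ by induction on the skeleton of $X_i$: having defined $H_i$ on the $(l-1)$-skeleton, extend over the $l$-faces of each type $K$ (processed in a fixed order, e.g.\ by decreasing $|K|$, then within a type in any order) by combining the obstruction-theoretic extension recorded before Lemma~\ref{unsthomotopy} --- legitimate because $\dim X_i\le p(n-d)$ makes all the relevant obstructions lie in groups $\pi_j(\mathbb S^{p(n-d)})=0$ with $j<p(n-d)$ --- with Lemma~\ref{lem:homotopy}, applied with this $K$, with $\bar k(K)=\min K$, with a layer index $\lambda$ increasing along the processing order, and with a fixed small $\delta=\delta_i$; faces of empty type stay fixed. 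Such an extension keeps the image of a face inside $\mathbf B^{\mathbf F}_{\lambda+1,K}\subset\bigcap_{k\in K}\mathbf B^{\mathbf F}_{2\varepsilon_k}(S_k)$, raises its mass by at most $\delta_i$, and --- via the gradient-flow phase of Lemma~\ref{lem:homotopy}, run for the uniform time of Lemma~\ref{lem:gradflow} --- drops its mass by a definite amount and moves it out of $\mathbf B^{\mathbf F}_{\lambda,K}=\bigcap_{k\in K}\mathbf B^{\mathbf F}_{(2-2^{-\lambda})\varepsilon_k}(S_k)$ (the driving field vanishes only on the complement of $\mathbf B^{\mathbf F}_{\lambda+1,K}$, so the flow must leave $\mathbf B^{\mathbf F}_{\lambda,K}$ in a uniformly bounded time). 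Setting $\Psi_i:=H_i(\cdot,1)$ and choosing $\delta_i$ with $\delta_i\cdot\#\{\text{faces of }X_i\}\to0$, the mass bound is clear, and a standard induction over skeleton dimension and $|K|$ yields that the image of $\Psi_i$ misses every $\mathbf B^{\mathbf F}_{\varepsilon_k/2}(S_k)$ with $k\le Q(i)$.

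The heart of the matter --- and the step I expect to be the real obstacle --- is the bookkeeping that keeps this induction consistent. The extension over a face $\sigma$ must agree with the already-defined $H_i$ on $\partial\sigma$, whose sub-faces carry larger types and were deformed by the (possibly different) flows $F^{\min K(\tau)}$; overlapping unstable balls must be handled so that deforming the image away from one does not drag it back toward another; and once $|H_i(x,t)|$ has left some $\mathbf B^{\mathbf F}_{\varepsilon_k/2}(S_k)$ it must stay out. This is precisely where the increasing-layer device $(2-2^{-\lambda})\varepsilon_k\uparrow 2\varepsilon_k$ and the minimal-index rule $\bar k(K)=\min K$ do their work: later deformations that involve $k$ act on strictly larger layers and hence push only further from $S_k$, while the uniform $\mathbf F$-displacement bounds of Lemmas~\ref{lem:disjoint}, \ref{lem:grad} and \ref{lem:dist_F} keep deformations whose type omits $k$ from re-entering $\mathbf B^{\mathbf F}_{\varepsilon_k/2}(S_k)$. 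Carrying out this combinatorial core of \cite{MarquesNeves} in the present generality --- stationary integral $d$-varifolds in place of minimal hypersurfaces, and exponent $p(n-d)$ in place of $p$ --- is the bulk of the proof; all the analytic estimates it needs are furnished by Lemmas~\ref{lem:union_compact}--\ref{lem:homotopy}.
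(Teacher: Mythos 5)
Your outline follows the same route as the paper (countable cover of $\mathcal U(\omega^d_p)$ by unstable balls via Lemmas~\ref{lem:union_compact} and~\ref{lem:nearunstable}, types $K(\sigma)$, skeletal induction combining the obstruction-theoretic perturbation with the gradient flow of Lemma~\ref{lem:homotopy}, layers $\mathbf B^{\mathbf F}_{\lambda,K}$), but it leaves genuine gaps precisely at the points you flag as ``the real obstacle.'' First, your stated mechanism for avoidance is wrong: the driving field of $\phi^V_{\lambda,K}$ is damped by $\mathbf F\bigl((F_u)_\# V,(\mathbf B^{\mathbf F}_{\lambda+1,K})^c\bigr)$, so the flow slows as it approaches $\partial\mathbf B^{\mathbf F}_{\lambda+1,K}$ and there is no uniform exit time from $\mathbf B^{\mathbf F}_{\lambda,K}$; what Lemma~\ref{lem:gradflow} actually provides is a definite mass drop $c(\lambda,K)$ by time $T$, and avoidance is obtained by choosing the neighborhood function $\eta(S)$ so small that every varifold in $\mathbf B^{\mathbf F}_{\eta(S)}(S)$ has mass within $\tilde c(\lambda,K)/3$ of $\omega^d_p$ (the paper's Step~2, property~(5)). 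Relatedly, your reduction to ``miss $\mathbf B^{\mathbf F}_{\varepsilon_k/2}(S_k)$'' asks for more than the construction delivers: the deformation only clears a small, mass-calibrated neighborhood $\mathbf B^{\mathbf F}_{\eta(S)/a_{p(n-d)+1}(S)}(S)$ of each unstable varifold, which is all that conclusion~(3) needs since $\bar\varepsilon$ may be small.

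Second, the ``no re-entry'' statement --- that a face deformed at stage $l$ does not re-approach an $S$ that earlier stages already cleared --- is asserted but not proved, and it is the quantitative core of the argument. The paper's Step~3 does this by constructing, for each $S$, sequences $a_q(S)$ and $\varepsilon_q(S)$ such that if $\Theta(x)\notin\mathbf B^{\mathbf F}_{\eta(S)/a_q(S)}(S)$ and $\|\Theta(x)\|(M)\le\omega^d_p+\varepsilon_q(S)$, then the whole homotopy stays outside $\mathbf B^{\mathbf F}_{\eta(S)/a_{q+1}(S)}(S)$: Lemma~\ref{lem:dist_F} converts the $\mathbf F$-gap into a parameter gap, Lemma~\ref{lem:grad} converts that into a mass drop of at least $\tfrac{c_{0,k}}{2}h_{S_k}(\eta(S)/4a_q(S))^2$, and this drop is incompatible with landing near $S$ because everything near $S$ has mass close to $\omega^d_p$. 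Without this (and without the uniform bounds $\tilde\eta_m,\tilde\varepsilon^m_q,\tilde a^m_q$ over each compact stratum, which rest on the local-finiteness property of the cover that your enumeration does not build in --- the paper shrinks the radii at stage $m$ below $\tfrac1{2m(m+1)}$ precisely so that each $\mathbf B^{\mathbf F}_{2\varepsilon_k}(S_k)$ meets only finitely many others), one cannot produce a single $\bar\varepsilon(S)$ valid for all large $i$, so conclusion~(3) is not established.
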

\begin{proof}
The proof consists of 4 steps.

\textbf{Step 1.} We construct a (finite or countable) sequence of \(\bigl(p(n-d)+1\bigr)\)-unstable stationary integral varifolds
\(\{S_k\}\subset\mathcal{U}(\omega^d_p)\), together with associated quadruples
\(\{(\varepsilon_k,c_{0,k},\{F^k_v\},m_k)\}\), so that the following hold:

\begin{itemize}
    \item For \(k_1\neq k_2\) we have \(\mathbf B^\mathbf F_{\varepsilon_{k_1}}(S_{k_1})\not\subset \mathbf B^\mathbf F_{\varepsilon_{k_2}}(S_{k_2})\).
    \item \(\mathcal{U}(\omega^d_p)\subset\bigcup_{k=1}^\infty \mathbf B^\mathbf F_{\varepsilon_k}(S_k)\), and moreover, for every \(m\in\mathbb N\) there exists \(k_m\in\mathbb N\) with
    \[
        \mathcal{U}_m(\omega^d_p)\subset\bigcup_{i=1}^{k_m}\mathbf B^\mathbf F_{\varepsilon_i}(S_i).
    \]
    \item For each fixed \(k\), the set
    \[
        \bar K(S_k):=\{k':\ \mathbf B^\mathbf F_{2\varepsilon_{k'}}(S_{k'})\cap\mathbf B^\mathbf F_{2\varepsilon_k}(S_k)\neq\emptyset\}
    \]
    is finite.
\end{itemize}

Indeed, for every \(S\in\mathcal{U}(\omega^d_p)\) choose a quadruple \((\varepsilon_S,c_{0,S},\{F^S_v\},m_S)\). The family \(\{\mathbf B^\mathbf F_{\varepsilon_S}(S)\}_{S\in\mathcal{U}(\omega^d_p)}\) covers \(\mathcal{U}(\omega^d_p)\). Note that one may always replace \(\varepsilon_S\) by a smaller positive number and keep \(S\) \(\bigl(p(n-d)+1\bigr)\)-unstable with the new quadruple.

If \(\mathcal{U}(\omega^d_p)\) is compact, we extract a finite subcovering \(\{\mathbf B^\mathbf F_{\varepsilon_k}(S_k)\}_{k=1}^{k_1}\) and take the corresponding quadruples. Then all the required properties hold.

Otherwise, by Lemma~\ref{lem:union_compact} we have a decomposition
\[
    \mathcal{U}(\omega^d_p)=\bigcup_{m\in\mathbb N}\mathcal{U}_m(\omega^d_p),
\qquad
\mathcal{U}_m(\omega^d_p):=\{S\in\mathcal{U}(\omega^d_p):\ \mathbf F(S,\mathcal{S}(\omega^d_p))\ge 1/m\},
\]
with each \(\mathcal{U}_m(\omega^d_p)\) compact. We construct the sequence \(\{S_k\}\) inductively in \(m\).

For \(m=1\): for every \(S\in\mathcal{U}_1(\omega^d_p)\) choose \(\varepsilon_S\) so that \(\varepsilon_S<1/4\). Extract a finite subcovering \(\{\mathbf B^\mathbf F_{\varepsilon_k}(S_k)\}_{k=1}^{k_1}\) of \(\mathcal{U}_1(\omega^d_p)\) with minimal \(k_1\). If \(V\in\mathbf B^\mathbf F_{2\varepsilon_k}(S_k)\) for some \(k\le k_1\), then
\[
    \mathbf F(V,\mathcal{S}(\omega^d_p)) \ge \mathbf F(S_k,\mathcal{S}(\omega^d_p)) - \mathbf F(S_k,V)
    \ge 1 - 2\varepsilon_k \ge 1/2.
\]
Hence
\[
    \bigcup_{k=1}^{k_1}\mathbf B^\mathbf F_{2\varepsilon_k}(S_k) \subset \mathcal{U}_2(\omega^d_p).
\]

Inductively, suppose we have chosen \(\{S_k\}_{k=1}^{k_{m-1}}\) covering \(\mathcal{U}_{m-1}(\omega^d_p)\). Set
\[
    \mathcal R_m := \mathcal{U}_m(\omega^d_p)\setminus\bigcup_{k=1}^{k_{m-1}}\mathbf B^\mathbf F_{\varepsilon_k}(S_k),
\]
which is compact. For each \(S\in\mathcal R_m \), choose \(\varepsilon_S<\tfrac{1}{2m(m+1)}\), and then take a finite subcovering \(\{\mathbf B^\mathbf F_{\varepsilon_k}(S_k)\}_{k=k_{m-1}+1}^{k_m}\) with minimal \(k_m\). Then for any \(k\le k_m\) and \(V\in\mathbf B^\mathbf F_{2\varepsilon_k}(S_k)\) we have
\[
    \mathbf{F}\bigl(V, \mathcal{S}(\omega^d_p)\bigr) \geq \mathbf{F}\bigl(S_k, \mathcal{S}(\omega^d_p)\bigr) - \mathbf{F}(S_k, V) \geq \frac{1}{m} - 2\varepsilon_k \geq \frac{1}{m} - \frac{1}{m(m+1)} = \frac{1}{m+1}.
\]
Moreover, if \(k\ge k_{m-1}+1\), then
\begin{align*}
    \mathbf{F}\bigl(V, \mathcal{S}(\omega^d_p)\bigr) &\leq \mathbf{F}\bigl(S_k, \mathcal{S}(\omega^d_p)\bigr) + \mathbf{F}(S_k, V) \\
    &\leq \frac{1}{m-1} + 2\varepsilon_k \leq \frac{1}{m-1} + \frac{1}{m(m+1)} \\
    &< \frac{1}{m-1} + \frac{1}{m+1} = \frac{2m}{m^2 - 1} < \frac{2}{m} < \frac{1}{\lfloor m/2\rfloor}.
\end{align*}
Thus
\begin{equation}\label{eq:subset1}
    \bigcup_{k=1}^{k_m}\mathbf B^\mathbf F_{2\varepsilon_k}(S_k) \subset \mathcal{U}_{m+1}(\omega^d_p),
\end{equation}
and
\begin{equation}\label{eq:subset2}
    \bigcup_{k=k_{m-1}+1}^{k_m}\mathbf B^\mathbf F_{2\varepsilon_k}(S_k)
    \subset \mathcal{U}_{m+1}(\omega^d_p)\setminus\mathcal{U}_{\lfloor m/2\rfloor}(\omega^d_p).
\end{equation}

Continuing this process for all \(m\) yields the desired sequence \(\{S_k\}_{k\in\mathbb N}\). The minimality in each stage guarantees the first bullet. The covering property is built into the construction, giving the second bullet. By~\eqref{eq:subset1} and~\eqref{eq:subset2}, for a fixed \(k\le k_m\), any ball \(\mathbf B^\mathbf F_{2\varepsilon_{k'}}(S_{k'})\) that intersects \(\mathbf B^\mathbf F_{2\varepsilon_k}(S_k)\) must itself be among the finitely many balls produced up to stage \(2m+2\). Hence
\[
    \#\bar K(S_k)\le k_{2m+2}<\infty,
\]
and the third bullet follows.

\textbf{Step 2.} We construct a function \(\eta:\mathcal{U}(\omega^d_p)\to(0,1]\) such that:
\begin{itemize}
    \item The neighborhoods satisfy
    \[
    \begin{aligned}
    \mathcal N_\eta &:= \bigcup_{S\in\mathcal U(\omega^d_p)} \mathbf B^\mathbf{F}_{2\eta(S)}(S)
    \subset \bigcup_{k} \overline{\mathbf B}^\mathbf{F}_{\varepsilon_k}(S_k),\\
    \mathcal N^m_\eta &:= \bigcup_{S\in\mathcal U_m(\omega^d_p)} \mathbf B^\mathbf{F}_{2\eta(S)}(S)
    \subset \bigcup_{k=1}^{k_m} \overline{ \mathbf B}^\mathbf{F}_{\varepsilon_k}(S_k).
    \end{aligned}
    \]
    \item On each \(\mathcal{U}_m(\omega^d_p)\), the function \(\eta\) has a positive lower bound \(\tilde\eta_m>0\).
    \item For any \(\lambda\in\{0,1,\dots,p(n-d)\}\), \(l\in\{0,1,\dots,p(n-d)\}\), \(K\subset\mathbb{N}\), \(\delta\in\bigl(0,c(\lambda,K)/4\bigr)\), and any continuous map \(\Theta:X^l\to \mathcal{N}_\eta\cap \mathbf{B}^\mathbf{F}_{\lambda,K}\) (assume \(\mathcal{N}_\eta\cap \mathbf{B}^\mathbf{F}_{\lambda,K}\neq\emptyset\)), there exists a homotopy
    \[
        H^{\Theta,\delta}_{\lambda,K}:X^l\times[0,1]\to \mathbf{B}^\mathbf{F}_{\lambda+1,K}
    \]
    such that:
    \begin{enumerate}
        \item \(H^{\Theta,\delta}_{\lambda,K}(\cdot,0)=\Theta(\cdot)\).
        \item \(\|H^{\Theta,\delta}_{\lambda,K}(x,t)\|(M)-\|\Theta(x)\|(M)\le \delta\) for all \(t\in[0,1]\).
        \item For \(t\in[0,1/2]\), \(\mathbf{F}\bigl(H^{\Theta,\delta}_{\lambda,K}(x,t),\Theta(x)\bigr)\le \delta\).
        \item For \(t\in[1/2,1]\) there exists \(v(x)\in\overline{B}^{p(n-d)+1}\) such that
        \begin{equation}\label{eq:flow}
            H^{\Theta,\delta}_{\lambda,K}(x,t)=\phi^{\Theta(x)}_{\lambda,K}\bigl(v(x),(2t-1)T\bigr).
        \end{equation}
        \item \(H^{\Theta,\delta}_{\lambda,K}(\cdot,1)\notin \mathcal{N}_\eta\).
    \end{enumerate}
\end{itemize}
        
\textbf{Claim 1.} There exists a \emph{continuous} positive function \(\eta_1:\mathcal{U}(\omega^d_p)\to\mathbb{R}^+\) such that, for any \(m\in\mathbb{N}\) and any \(S\in\mathcal{U}_m(\omega^d_p)\),
\[
    \mathbf{B}^\mathbf{F}_{2\eta_1(S)}(S)\subset \bigcup_{k=1}^{k_m}\mathbf{B}^\mathbf{F}_{\varepsilon_k}(S_k).
\]

\begin{proof}
Define \(\eta_1\) inductively. If \(S\in\mathcal{U}_1(\omega^d_p)\), set
\[
    \eta_1(S):=\max\Bigl\{r>0:\ \mathbf{B}^\mathbf{F}_{2r}(S)\subset \bigcup_{k=1}^{k_1}\overline{\mathbf{B}}^\mathbf{F}_{\varepsilon_k}(S_k)\Bigr\}.
\]
If \(m\ge2\) is the least integer with \(S\in \mathcal{U}_m(\omega^d_p)\setminus \mathcal{U}_{m-1}(\omega^d_p)\), define
\[
    \eta_1(S):=\min\left\{
    \begin{array}{l}
    \displaystyle \max\Bigl\{r>0:\ \mathbf{B}^\mathbf{F}_{2r}(S)\subset \bigcup_{k=1}^{k_m}\overline{\mathbf{B}}^\mathbf{F}_{\varepsilon_k}(S_k)\Bigr\},\\[6pt]
    \displaystyle \min\bigl\{\eta_1(S')+\mathbf{F}(S,S'):\ S'\in \mathcal{U}_{m-1}(\omega^d_p)\bigr\}
    \end{array}\right\}>0.
\]
This \(\eta_1\) is continuous and therefore has a positive lower bound on each compact set \(\mathcal{U}_m(\omega^d_p)\).
\end{proof}

Since $\eta_1$ is continuous, the set $\{t\geq0: \forall\, V\in \overline{\mathbf{B}}^\mathbf{F}_{t}(S),\ \forall\,\lambda\in\{0,1,\dots,p(n-d)\},\bigl|\omega^d_p-\|V\|(M)\bigr|\le \tilde c(\lambda,\tilde K)/3\}$ is closed, and since $\|V\|(M)=\omega^d_p$, by continuity of mass we have the supremum of this set is positive. Therefore, we can define \(\eta(S)\) to be the largest number in \(\bigl(0,\min(\eta_1(S),1)]\bigr)\) such that
\[
    \forall\, V\in \mathbf{B}^\mathbf{F}_{\eta(S)}(S),\ \forall\,\lambda\in\{0,1,\dots,p(n-d)\}:\quad
    \bigl|\omega^d_p-\|V\|(M)\bigr|\le \tilde c(\lambda,\tilde K)/3
\]
for every nonempty subset \(\tilde K\subset \{\,k\mid \mathbf{B}^\mathbf{F}_{\eta_1(S)}(S)\cap \mathbf{B}^\mathbf{F}_{2\varepsilon_k}(S_k)\neq\emptyset\,\}\).

To show that \(\eta\) has a positive lower bound \(\tilde\eta_m\) on each \(\mathcal{U}_m(\omega^d_p)\), observe that for any \(S\in\mathcal{U}_m(\omega^d_p)\) the index set
\[
    \{\,k : \mathbf{B}^\mathbf{F}_{\eta_1(S)}(S)\cap \mathbf{B}^\mathbf{F}_{2\varepsilon_k}(S_k)\neq\emptyset\,\}
    \subset K_m:=\bigcup_{k=1}^{k_m}\bar K(S_k),
\]
since
\[\displaystyle \bigcup_{S\in\mathcal{U}_m(\omega^d_p)}\mathbf{B}^\mathbf{F}_{2\eta_1(S)}(S)\subset \bigcup_{k=1}^{k_m}\overline{\mathbf{B}}^\mathbf{F}_{\varepsilon_k}(S_k).\]
By the third bullet in \textbf{Step 1}, the set \(K_m\) is finite. It follows from the compactness of \(\mathcal{U}_m(\omega^d_p)\) that there exists \(\tilde\eta_m>0\) such that, for any \(S\in\mathcal{U}_m(\omega^d_p)\) and any \(V\in \mathbf{B}^\mathbf{F}_{\tilde\eta_m}(S)\),
\[
    \bigl|\omega^d_p-\|V\|(M)\bigr|\le \min_{\substack{K\subset K_m\\ \lambda\in\{0,1,\dots,p(n-d)\}}}\frac{\tilde c(\lambda,K)}{3}.
\]
Hence \(\eta(S)\ge \tilde\eta_m>0\) on \(\mathcal{U}_m(\omega^d_p)\).

For the third bullet, apply Lemma~\ref{lem:homotopy} to construct the homotopy \(H^{\Theta,\delta}_{\lambda,K}\). Properties (1)–(4) follow directly. To verify (5), suppose, toward a contradiction, that there exist \(x\in X^l\) and \(S\in\mathcal{U}(\omega^d_p)\) with
\[
H^{\Theta,\delta}_{\lambda,K}(x,1)\in \mathbf{B}^\mathbf{F}_{\eta(S)}(S)\cap \mathbf{B}^\mathbf{F}_{\lambda+1,K}.
\]
By the definition of \(\eta\),
\[
\omega^d_p-\|H^{\Theta,\delta}_{\lambda,K}(x,1)\|(M)<\tilde c(\lambda,K)/2.
\]
Since \(H^{\Theta,\delta}_{\lambda,K}(x,0)=\Theta(x)\in \mathbf{B}^\mathbf{F}_{\eta(S)}(S)\cap \mathbf{B}^\mathbf{F}_{\lambda+1,K}\), we also have
\[
\|\Theta(x)\|(M)-\omega^d_p<\tilde c(\lambda,K)/2.
\]
Therefore,
\[
\|\Theta(x)\|(M)-\|H^{\Theta,\delta}_{\lambda,K}(x,1)\|(M)<\tilde c(\lambda,K),
\]
contradicting item (2) of Lemma~\ref{lem:homotopy}. This proves (5).

\textbf{Step 3.} We show that the homotopy map defined in \textbf{Step 2} does not push a varifold too close to $\mathcal{U}(\omega^d_p)$, provided it is initially far from $\mathcal{U}(\omega^d_p)$.

More precisely, we claim that for any $S\in\mathcal{U}(\omega^d_p)$ there exist sequences $\{\varepsilon_q(S)\}_{q=1}^{p(n-d)}\subset\mathbb{R}^+$ and $\{a_q(S)\}_{q=1}^{p(n-d)+1}$ with $a_1(S)=2$ and $a_q(S)\ge 2^q$ such that the following holds. In the third bullet of \textbf{Step 2}, for any $q\in\{1,\dots,p(n-d)\}$, any $S\in\mathcal{U}(\omega^d_p)$ with $\mathbf{B}^{\mathbf F}_{\lambda+1,K}\cap \mathbf{B}^{\mathbf F}_{\eta(S)}(S)\neq\emptyset$, and any $x\in X^l$ with $\Theta(x)\notin \mathbf{B}^{\mathbf F}_{\eta(S)/a_q(S)}(S)$ and $\|\Theta(x)\|(M)-\omega^d_p\le \varepsilon_q(S)$, we have for all $t\ge0$ and all $\delta\in\bigl(0,\min\bigl(\eta(S)/(4a_q(S)),\varepsilon_q(S)\bigr)\bigr)$,
\[
    H^{\Theta,\delta}_{\lambda,K}(x,t)\notin \mathbf{B}^{\mathbf F}_{\eta(S)/a_{q+1}(S)}(S).
\]
Moreover, on each $\mathcal{U}_m(\omega^d_p)$ the quantities $\varepsilon_q(S)$ admit a positive lower bound $\tilde\varepsilon^m_q$, and $a_q(S)$ admit a uniform upper bound $\tilde a^m_q$.

We construct $a_q(S)$ and $\varepsilon_q(S)$ inductively. Set $a_1(S)=2$.

Fix $S\in\mathcal{U}(\omega^d_p)$ and suppose $a_q(S)$ is defined (and if $q>1$, also $\varepsilon_{q-1}(S)$). For each \(k\) with \(\overline{\mathbf{B}}^{\mathbf F}_{\lambda+1,\{k\}}\cap \mathbf{B}^{\mathbf F}_{\eta(S)}(S)\neq\emptyset\) and for each \(V\in\overline{\mathbf{B}}^{\mathbf F}_{\lambda+1,\{k\}}\), set
\[
\begin{aligned}
A_{q,S,k}(V)&:=\bigl((F^k_{\cdot})_{\#}V\bigr)^{-1}\partial\mathbf{B}^{\mathbf F}_{\,3\eta(S)/(4a_q(S))}(S),\\
B_{q,S,k}(V)&:=\bigl((F^k_{\cdot})_{\#}V\bigr)^{-1}\overline{\mathbf{B}}^{\mathbf F}_{\,\eta(S)/(2a_q(S))}(S).
\end{aligned}
\]
Define
\[
d_{q,S,k}(V):=\mathrm{dist}\bigl(A_{q,S,k}(V),\,B_{q,S,k}(V)\bigr),
\]
with the convention that \(\mathrm{dist}(\emptyset,\cdot)=\mathrm{dist}(\cdot,\emptyset)=+\infty\). By Lemma~\ref{lem:dist_F},
\[
    d_{q,S,k}(V)\ \ge\ h_{S_k}\!\left(\frac{\eta(S)}{4a_q(S)}\right).
\]
Choose $m \in \mathbb{N}$ such that $S \in \mathcal{U}_m(\omega^d_p)$. The index set
\[
    \{\,k : \overline{\mathbf{B}}^\mathbf{F}_{\lambda+1,\{k\}}\cap \mathbf{B}^\mathbf{F}_{\eta(S)}(S)\neq\emptyset\,\}
    \subset K_m=\bigcup_{k=1}^{k_m}\bar K(S_k),
\]
since
\[\displaystyle \bigcup_{S\in\mathcal{U}_m(\omega^d_p)}\mathbf{B}^\mathbf{F}_{2\eta_1(S)}(S)\subset \bigcup_{k=1}^{k_m}\mathbf{B}^\mathbf{F}_{\varepsilon_k}(S_k).\]
By the third bullet in \textbf{Step 1}, the set \(K_m\) is finite. Hence we can set
\begin{equation}\label{eq:def-eps}
    \varepsilon_q(S):=\min_{\substack{k:\\ \overline{\mathbf{B}}^{\mathbf F}_{\lambda+1,\{k\}}\cap \mathbf{B}^{\mathbf F}_{\eta(S)}(S)\neq\emptyset}}
    \frac{c_{0,k}}{16}\left[h_{S_k}\!\left(\frac{\eta(S)}{4a_q(S)}\right)\right]^2.
\end{equation}

\textbf{Claim 2.} There exists $\tilde\varepsilon^m_q>0$ such that $\inf_{S\in\mathcal{U}_m(\omega^d_p)}\varepsilon_q(S)\ge \tilde\varepsilon^m_q$.
\begin{proof}
For fixed $m$, by the first bullet of \textbf{Step 2},
\[
K'_m:=\Bigl\{k:\ \exists\,S\in\mathcal{U}_m(\omega^d_p)\ \text{with}\ \overline{\mathbf{B}}^{\mathbf F}_{\lambda+1,\{k\}}\cap \mathbf{B}^{\mathbf F}_{\eta(S)}(S)\neq\emptyset\Bigr\}
\subset \bigcup_{k=1}^{k_m}\bar K(S_k),
\]
hence $\#K'_m<\infty$ by the third bullet of \textbf{Step 1}. By the inductive hypothesis and the second bullet of \textbf{Step 2}, the ratios $\eta(S)/a_q(S)$ admit a positive lower bound $c'_m>0$ on $\mathcal{U}_m(\omega^d_p)$. Therefore,
\[
\begin{aligned}
\inf_{S\in\mathcal{U}_m(\omega^d_p)}\varepsilon_q(S)
&= \min_{k\in K'_m}\ 
\inf_{\substack{S\in\mathcal{U}_m(\omega^d_p)\\
\overline{\mathbf{B}}^{\mathbf F}_{\lambda+1,\{k\}}
\cap \mathbf{B}^{\mathbf F}_{\eta(S)}(S)\neq\emptyset}}
\frac{c_{0,k}}{16}\left[h_{S_k}\!\left(\frac{\eta(S)}{4a_q(S)}\right)\right]^2 \\
&\ge \min_{k\in K'_m}\frac{c_{0,k}}{16}\,h_{S_k}(c'_m)^2 \;>\; 0.
\end{aligned}
\]
\end{proof}

Choose $a_{q+1}(S)\ge 2a_q(S)+1$ minimal such that for every $V\in\overline{\mathbf{B}}^{\mathbf F}_{\eta(S)/a_{q+1}(S)}(S)$,
\[
\|V\|(M)\ \ge\ \omega^d_p-\varepsilon_q(S).
\]

\textbf{Claim 3.} There exists $\tilde a^m_{q+1}>0$ such that $\sup_{S\in\mathcal{U}_m(\omega^d_p)} a_{q+1}(S)\le \tilde a^m_{q+1}$.
\begin{proof}
Since $\eta\le 1$, it suffices to find $c''_m>0$ such that for every $S\in\mathcal{U}_m(\omega^d_p)$ and every $V\in\overline{\mathbf{B}}^{\mathbf F}_{c''_m}(S)$,
\[
\|V\|(M)\ \ge\ \omega^d_p-\tilde\varepsilon^m_q.
\]
Then $c''_m\le \eta(S)/a_{q+1}(S)\le 1/a_{q+1}(S)$, so $a_{q+1}(S)\le 1/c''_m$ uniformly on $\mathcal{U}_m(\omega^d_p)$.

Argue by contradiction: suppose there exist $S_i\in\mathcal{U}_m(\omega^d_p)$ and $V_i\in\overline{\mathbf{B}}^{\mathbf F}_{1/i}(S_i)$ with $\|V_i\|(M)<\omega^d_p-\tilde\varepsilon^m_q$. By compactness of $\mathcal{U}_m(\omega^d_p)$, up to a subsequence $S_i\to S$ and $V_i\to S$, so $\|S\|(M)=\omega^d_p$, a contradiction.
\end{proof}

\textbf{Claim 4.} If $\Theta(x)\notin \mathbf{B}^{\mathbf F}_{\eta(S)/a_q(S)}(S)$ and $\|\Theta(x)\|(M)-\omega^d_p\le \varepsilon_q(S)$, then
\[
H^{\Theta,\delta}_{\lambda,K}(x,t)\notin \mathbf{B}^{\mathbf F}_{\eta(S)/a_{q+1}(S)}(S).
\]
\begin{proof}
For $t\in[0,1/2]$, since $\delta<\eta(S)/(4a_q(S))$ and by the third bullet of \textbf{Step 2},
\[
\begin{aligned}
\mathbf{F}\bigl(H^{\Theta,\delta}_{\lambda,K}(x,t),S\bigr)
&\ge \mathbf{F}\bigl(\Theta(x),S\bigr)
 - \mathbf{F}\bigl(H^{\Theta,\delta}_{\lambda,K}(x,t),\Theta(x)\bigr) \\
&\ge \frac{\eta(S)}{a_q(S)} - \delta
\ \ge\ \frac{3\,\eta(S)}{4\,a_q(S)}
\ \ge\ \frac{\eta(S)}{a_{q+1}(S)}.
\end{aligned}
\]

For $t\in(1/2,1]$, we have $\delta<\varepsilon_q(S)$, hence by the third bullet of \textbf{Step 2},
$\|H^{\Theta,\delta}_{\lambda,K}(x,1/2)\|(M)\le \omega^d_p+2\varepsilon_q(S)$. If, toward a contradiction, there exists $t_1\in(1/2,1]$ with
$H^{\Theta,\delta}_{\lambda,K}(x,t_1)\in\overline{\mathbf{B}}^{\mathbf F}_{\eta(S)/a_{q+1}(S)}(S)$, let $k=\bar k(K)$. Then
\[
\begin{aligned}
\mathrm{dist}\!\Big(
\bigl((F^k_{\cdot})_{\#}(\Theta(x))\bigr)^{-1}\!\bigl(&H^{\Theta,\delta}_{\lambda,K}(x,t_1)\bigr),\ 
\bigl((F^k_{\cdot})_{\#}(\Theta(x))\bigr)^{-1}\!\bigl(H^{\Theta,\delta}_{\lambda,K}(x,1/2)\bigr)
\Big) \\
&\ge d_{q,S,k}(\Theta(x))
\ \ge\ h_{S_k}\!\left(\frac{\eta(S)}{4a_q(S)}\right) \;>\; 0.
\end{aligned}
\]

By Lemma~\ref{lem:grad}, in addition to equations \eqref{eq:flow} and \eqref{eq:def-eps} above,
\[
\|H^{\Theta,\delta}_{\lambda,K}(x,t_1)\|(M)
\ \le\ \|H^{\Theta,\delta}_{\lambda,K}(x,1/2)\|(M)-\frac{c_{0,k}}{2}\left[h_{S_k}\!\left(\frac{\eta(S)}{4a_q(S)}\right)\right]^2
\ \le\ \omega^d_p-6\varepsilon_q(S),
\]
contradicting the defining property of $a_{q+1}(S)$.
\end{proof}

Finally, define
\[
\varepsilon_m:=\min_{q\in\{1,\dots,p(n-d)\}}\ \inf_{S\in\mathcal{U}_m(\omega^d_p)}
\min\!\left(\frac{\eta(S)}{4a_{q+1}(S)},\,\varepsilon_q(S)\right)>0.
\]

\textbf{Step 4 [Hierarchical Deformations].}
In this step we construct a (sub)sequence of homotopies
\[
H_i: X_i \times [0,1] \longrightarrow \mathcal Z_d(M; \mathbb Z_2)
\]
such that \(H_i(\cdot,0)=\Phi_i\) and \(H_i(\cdot,1)=\Psi_i\), with all required properties.

Choose a subsequence \(\{j_i\}\) so that \(\sup_x \mathbf M(\Phi_{j_i}(x)) \le \omega^d_p + \varepsilon_i/2\), which is possible by the definition of a min--max sequence. Relabel this subsequence as \(\Phi_i\).

It suffices to deform \(\Phi_i\) to \(\Psi_i\) so that
\(|\Psi_i|\cap \mathcal N^i_{\eta/(2a_{p(n-d)+1})}=\emptyset\), where for
\(c:\mathcal U(\omega^d_p)\to\mathbb R^+\) we set
\[
\mathcal N^i_{c\eta}\ :=\ \bigcup_{S\in\mathcal U_i(\omega^d_p)} \mathbf B^{\mathbf F}_{\,2c(S)\eta(S)}(S).
\]
Fix \(i\) for the remainder of this step.

Recall that subdivision of a simplicial complex does not change the topological or metric properties, only the combinatorial structure. We perform repeated barycentric subdivisions of \(X_i\) to obtain a “nice’’ combinatorial structure and partition the closed faces into \textit{good} and \textit{bad} faces, with the following properties:
\begin{enumerate}
  \item If \(|\Phi_i|(F)\cap \mathcal N^i_{\eta/2}\neq\emptyset\), then \(|\Phi_i|(F)\subset \mathcal N^i_\eta\). Denote by \(\mathcal B\) the set of all such \textit{bad} faces.
  \item For each \(F\in\mathcal B\) there is a nonempty finite set \(K(F)\subset\mathbb N\) such that \(|\Phi_i|(F)\subset \mathbf B^{\mathbf F}_{K(F)}\), and whenever \(F\subset F'\) with \(F,F'\in\mathcal B\) we have \(K(F)\supset K(F')\).
\end{enumerate}
Since \(X_i\) is compact, \(|\Phi_i|\) is uniformly continuous. Moreover, \(\eta\) has a uniform positive lower bound on \(\mathcal U_i(\omega^d_p)\); hence \(\mathbf F(\partial\mathcal N^i_\eta,\mathcal N^i_{\eta/2})\ge c_i>0\). After sufficiently many barycentric subdivisions, property (1) holds.

By the first bullet of \textbf{Step 2}, each $|\Phi_i|(F)$ with $F\in\mathcal B$ can be covered by finitely many
\begin{equation*}
    \{\mathbf{B}^\mathbf{F}_{\varepsilon_j}(S_j)\}.
\end{equation*}
Hence the pullback open cover
\begin{equation*}
    \{\,|\Phi_i|^{-1}(\mathbf{B}^\mathbf{F}_{\varepsilon_j}(S_j))\,\}
\end{equation*}
is finite and therefore admits a Lebesgue number. Because $\mathcal B$ is finite, after further barycentric subdivisions each $|\Phi_i|(F)$ with $F\in\mathcal B$ lies in one of the $\mathbf{B}^\mathbf{F}_{\varepsilon_j}(S_j)$.

\begin{figure}
\centering
\includegraphics[scale=1.2]{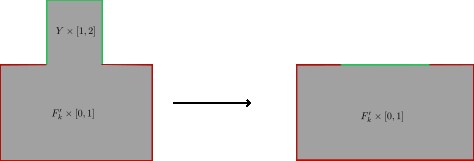}
\caption{Obtaining a homotopy on $F'_k$ from one on $Y$}
\label{YFkk}
\end{figure}

We define the index sets $K$ as follows. If $F\in\mathcal B$ is not contained in any other member of $\mathcal B$, set $K(F)$ to be the singleton consisting of one index $j$ such that $\mathbf{B}^\mathbf{F}_{\varepsilon_j}(S_j)$ contains $|\Phi_i|(F)$. Proceeding by downward induction on dimension, for each face $F'\subset F$ with $F\in\mathcal B$, define
\begin{equation*}
K(F') := \bigcup_{F'\subset F\in\mathcal B} K(F).
\end{equation*}
By construction, this has the required properties. We cannot simply set $K(F')=K(F)$ because a lower-dimensional face $F'$ may lie in multiple higher-dimensional faces $F$ in $\mathcal B$, which can yield different sets.

We now define \(H_i\) on skeleta \(X_i^{(k)}\) by induction on \(k\). For $k = 0$, we apply \textbf{Step 2} with $\lambda = 0$ and $\delta < \varepsilon_i/4$ to all the $0$-cells in $\mathcal{B}$. For all the $0$-cells outside $\mathcal{B}$, we simply use the constant homotopy map. This yields $H_i^{(0)}: X_i^{(0)}\times[0,1]\to \mathcal Z_d(M;\mathbb Z_2)$ such that
\begin{equation*}
    |H_i^{(0)}|(X_i^{(0)}\times\{1\})\ \cap\ \mathcal N^i_{\eta/a_1}\ =\ \emptyset,
    \qquad
    |H_i^{(0)}|(x,[0,1])\ \subset\ \mathbf B^{\mathbf F}_{\,1,K(x)}
    \ \text{ for }x\in\mathcal B^{(0)}.
\end{equation*}

We assume the homotopy map has been defined on the $(k-1)$-skeleton, with $k\ge 1$. Consider a $k$-face $F_k$. Set
\begin{equation*}
F'_k := F_k \cup \bigl(\partial F_k \times [0,1]\bigr).
\end{equation*}
Then $F'_k$ is homeomorphic to $F_k$ ($\cong D^k$), so we can define $\Theta$ on $F'_k$ by concatenating $H^{(k-1)}_i$ on $\partial F_k \times [0,1]$ (available by the inductive hypothesis) with $\Phi_i$ on $F_k$. Now we construct a homotopy map $\tilde H^{(k)}_i$ with initial data $\Theta$. We have two cases:

\textbf{Case 1.} If $F_k \notin \mathcal{B}$, then according to the definition of $\mathcal{B}$, no cell in $\partial F_k$ belongs to $\mathcal{B}$. In this case we define the homotopy on this cell to be constant.

\textbf{Case 2.} If \(F_k\in\mathcal B\), the subdivision guarantees
\(|\Theta|(F_k')\subset \mathbf B^{\mathbf F}_{\,k,K(F_k)}\).

However, it may fail that \(|\Theta|(F_k')\subset\mathcal N^i_\eta\). Subdivide \(F_k'\) further so that every closed \(k\)-face \(\widetilde F\) with \(|\Theta|(\widetilde F)\cap \mathcal N^i_{\eta/a_k}\neq\emptyset\) actually satisfies \(|\Theta|(\widetilde F)\subset \mathcal N^i_\eta\). Let \(Y\) be the union of all such \(\widetilde F\). By induction, \(|\Theta|(\partial F_k')\cap \mathcal N^i_{\eta/a_k}=\emptyset\), hence \(|\Theta|(\partial Y)\cap \mathcal N^i_{\eta/a_k}=\emptyset\).

With this new subdivision, we can define a map 
\begin{equation*}
    \hat H: F'_k \times [0, 1] \cup Y \times [1,2] \rightarrow \mathcal{Z}_d(M; \mathbb{Z}_2) 
\end{equation*}
\begin{figure}
\centering
\includegraphics[scale=1.2]{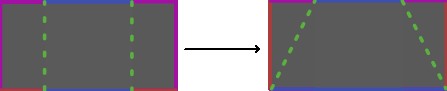}
\caption{Obtaining a homotopy on $F_k$ from one on $F'_k$}
\label{F'Fk}
\end{figure}
such that $\hat H(\cdot, t) := \Theta(\cdot)$ for $t \in [0,1]$, and $\hat H(\cdot, t+1) = H^{|\Theta|, \delta}_{k, K(F_k)}(\cdot, t)$ for $t \in [0,1]$ in \textbf{Step 2} with $\delta < \varepsilon_i$. By the construction of $Y$, we have
\begin{equation*}
    |\hat H((F'_k - Y) \times 1 \cup \partial F'_k \times [0, 1])| \cap \mathcal{N}^i_{\eta/a_{k+1}} = \emptyset\,.
\end{equation*}
By (5) in the third bullet of \textbf{Step 2},
\begin{equation*}
    |\hat H(Y \times 2)| \cap \mathcal{N}^i_{\eta/a_{k+1}} = \emptyset\,.
\end{equation*}
By \textbf{Step 3},
\begin{equation*}
    |\hat H(\partial Y \times [1, 2])| \cap \mathcal{N}^i_{\eta/a_{k+1}} = \emptyset\,.
\end{equation*} 
We can derive $\tilde H^{(k)}_i: F'_k \times [0, 1] \rightarrow \mathcal{Z}_d(M; \mathbb{Z}_2)$ from $\hat H$ induced by the homeomorphism (see Figure \ref{YFkk}.)
\begin{equation*}
    \begin{aligned}
        (F'_k \times [0,1], &\partial F'_k \times [0,1], F'_k \times 1) \cong 
        (F'_k \times [0, 1] \cup Y \times [1,2], \\
        & \partial F'_k \times [0,1], (F'_k\backslash Y)\times 1) \cup \left(Y \times 2\right) \cup \left(\partial Y \times [1,2]\right))\,.
    \end{aligned}            
\end{equation*}
        
By the property of $\hat H$, we also have
\begin{equation*}
    |\tilde H^{(k)}_i(F'_k \times 1 \cup \partial F'_k \times [0, 1])| \cap \mathcal{N}^i_{\eta / a_{k + 1}} = \emptyset\,.
\end{equation*}
For both cases, we can derive $H^{(k)}_i: F_k \times [0, 1] \rightarrow \mathcal{Z}_d(M; \mathbb{Z}_2)$ from $\tilde{H}^{(k)}_i$ induced by the homeomorphism 
\begin{equation*}
    (F_k \times [0,1], F_k \times 1) \cong (F'_k \times [0,1], F'_k \times 1 \cup \partial F'_k \times [0,1]),
\end{equation*}
satisfying that $H^{(k)}_i|_{\partial F_k \times [0,1]} = H^{(k-1)}_i|_{\partial F_k \times [0,1]}$ and $H^{(k)}_i|_{F_k}(\cdot, 0) = \Phi_i|_{F_k}$ (see Figure \ref{F'Fk}). Concatenating over all \(k\)-cells yields
\begin{equation*}
    H^{(k)}_i: X^{(k)}_i \times [0,1] \rightarrow \mathcal{Z}_d(M; \mathbb{Z}_2).
\end{equation*}
Note that
\begin{equation*}
    |H^{(k)}_i(F_k \times 1)| = |\tilde H^{(k)}_i(F'_k \times 1 \cup \partial F'_k \times [0, 1])|\,.   
\end{equation*}
Therefore, we can conclude that
\begin{equation*}
    |H^{(k)}_i|(X^{(k)}_i \times 1) \cap \mathcal{N}^i_{\eta / a_{k+1}} = \emptyset\,.
\end{equation*}

Set \(\Psi_i:=H_i^{(p(n-d))}(\cdot,1)\) and \(\bar\varepsilon(S):=\eta(S)/(2a_{p(n-d)+1}(S))\). Then all required conclusions follow.
\end{proof}

\begin{proof}[Proof of Theorem \ref{thm:intro-main}]
    For any min-max sequence $\{\Phi_i\}$ for the $d$-dimensional $p$-width, by Theorem \ref{Thm:deform}, there exists a new min-max sequence $\{\Psi_i\}$ such that $\mathbf{C}(\{\Psi_i\})\cap \mathcal{APR}_{p,d} \subset \mathcal{S}(\omega^d_p)$. Therefore, the Almgren--Pitts theory generates $S \in \mathcal{IV}_d(M)$ that is stationary and almost minimizing in annuli such that $S \in \mathbf{C}(\{\Psi_i\}) \cap \mathcal{APR}_{p,d} \subset \mathcal{S}(\omega^d_p)$, i.e., $\mathrm{index}(S) \leq p(n-d)$.
\end{proof}

\section{Optimality of the bound} \label{sec-optimal}

In this section, we prove that the index bound must depend on $n$ and that it is optimal for $\omega_1^1$.

\begin{theorem}
    For any $n \geq 2$, $\omega_1^1(S^n, g_{\operatorname{round}}) = 2\pi$ and is realized by closed equatorial geodesics of Morse index $n - 1$.
\end{theorem}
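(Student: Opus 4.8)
The plan is to pin down the value $\omega_1^1(S^n,g_{\mathrm{round}})=2\pi$ by two matching bounds, to identify the min--max realizer, and then to compute its index directly. For the upper bound I would use the explicit \emph{meridian sweepout}: fix antipodal points $\pm e_0\in S^n\subset\mathbb R^{n+1}$; each line $\ell\subset e_0^{\perp}\cong\mathbb R^n$ determines the $2$-plane $P_\ell=\operatorname{span}(e_0,\ell)$ and the great circle $C_\ell=P_\ell\cap S^n$ through $\pm e_0$, of length exactly $2\pi$. The assignment $\ell\mapsto|C_\ell|$ is a continuous map $\Phi\colon\mathbb{RP}^{n-1}\to\mathcal Z_1(S^n;\mathbb Z_2)$ with no concentration of mass and $\sup\mathbf M(\Phi)=2\pi$. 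Since every point of $S^n\setminus\{\pm e_0\}$ lies on exactly one $C_\ell$, under Almgren's isomorphism the family $\{C_\ell\}$ sweeps out the generator of $H_n(S^n;\mathbb Z_2)$, so $\Phi_*[\mathbb{RP}^{n-1}]$ generates $H_{n-1}(\mathcal Z_1(S^n;\mathbb Z_2);\mathbb Z_2)\cong\mathbb Z_2$ and hence $\Phi^{*}\mu\neq 0$; after a cubical approximation $\Phi$ is an admissible $1$-sweepout, giving $\omega_1^1\le 2\pi$.

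For the lower bound I would invoke Theorem~\ref{thm:1d-minmax}: $\omega_1^1=\sum_i\operatorname{length}_g(G_i)$ with the $G_i$ disjoint embedded stationary geodesic nets, at least one nonempty since the width is positive. It suffices to show every nonempty connected stationary geodesic net $G$ in $(S^n,g_{\mathrm{round}})$ has $\operatorname{length}_g(G)\ge 2\pi$. The key point is that each linear coordinate $x_j$ of $\mathbb R^{n+1}$ restricts to an eigenfunction, with eigenvalue $1$, of the (weighted) metric-graph Laplacian of $G$: along each unit-speed geodesic edge $\gamma$ one has $\ddot\gamma=-\gamma$ in $\mathbb R^{n+1}$, so $(x_j\circ\gamma)''=-(x_j\circ\gamma)$; and the Kirchhoff condition at each vertex $v$, namely $\sum_{e\ni v}m_e\,\partial_{\tau_e}x_j=0$ for the outward unit tangents $\tau_e$, holds because $\sum_{e\ni v}m_e\tau_e=0$ by the balancing of unit tangents (stationarity). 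Integrating $-\ddot\gamma=\gamma$ over the edges and summing, the same balancing gives $\int_G x\,d\mathcal H^1=0$, so each $x_j|_G$ has zero mean; and $G$ is not contained in any hyperplane through the origin unless it lies in a totally geodesic $S^{n-1}$, in which case one induces on $n$ (the case $S^1$ being immediate). Hence $1$ is a nonzero eigenvalue of the metric-graph Laplacian, so its spectral gap satisfies $\lambda_1(G)\le 1$. On the other hand $G$ is a leafless metric graph (stationarity forbids degree-$1$ vertices and forces degree-$2$ vertices to be straight), so the sharp spectral-gap inequality for leafless metric graphs gives $\lambda_1(G)\ge(2\pi/L)^2$ with $L=\operatorname{length}_g(G)$; therefore $L\ge 2\pi$.

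Combining the two bounds, $\omega_1^1(S^n,g_{\mathrm{round}})=2\pi$. In the equality case $L=2\pi$ one has $\lambda_1(G)=1$; since a nonconstant closed geodesic arc in the round sphere necessarily has length $2\pi$, and since the eigenvalue-$1$ eigenspace already contains the $(n+1)$ functions $x_j|_G$ (independent when $G$ is nondegenerate), the geometry of great-circle arcs rules out any genuine vertex, so $G$ is a single equatorial geodesic. For such a geodesic $\gamma\subset S^n$ the normal bundle is trivialized by the parallel orthonormal frame $E_3,\dots,E_{n+1}$ of constant coordinate vector fields, and since $S^n$ has constant curvature $1$ the index form on a normal field $X=\sum_i f_iE_i$ is $Q(X,X)=\int_0^{2\pi}\bigl(|\nabla_tX|^2-\langle R(X,\dot\gamma)\dot\gamma,X\rangle\bigr)\,dt=\sum_i\int_0^{2\pi}\bigl((f_i')^2-f_i^2\bigr)\,dt$; a Fourier expansion on $[0,2\pi]$ shows the only negative contributions come from the constant modes, one per normal direction, so $Q$ is negative definite exactly on the $(n-1)$-dimensional space of parallel normal fields and nonnegative on its complement, whence $\operatorname{index}(\gamma)=n-1$. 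As $n-1=p(n-d)$ for $p=d=1$, this realizer attains the bound of Theorem~\ref{thm:intro-main}, proving it is optimal for $\omega_1^1$ and must depend on $n$.

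The step I expect to be the main obstacle is the lower bound $\operatorname{length}_g(G)\ge 2\pi$: one must set up the (possibly weighted) metric-graph Laplacian carefully and correctly invoke the sharp leafless spectral-gap inequality, and then handle the rigidity in the equality case, using that the non-loop extremizers among metric graphs cannot be realized by great-circle arcs of total length $2\pi$ in the round sphere.
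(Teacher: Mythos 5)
Your upper bound (the explicit meridian sweepout) and the closing index computation are correct and in substance the same as the paper's (which cites Rademacher for the upper bound and Klingenberg for $\operatorname{index}=n-1$). The genuine gap is in the lower bound. The inequality you invoke, $\lambda_1(G)\ge (2\pi/L)^2$ for \emph{leafless} metric graphs, is not a theorem: leaflessness is insufficient. Consider the dumbbell (two loops of length $a$ joined by an edge of length $b$), which is leafless; the mean-zero test function equal to $+1$ on one loop, $-1$ on the other, and linear on the bridge has Rayleigh quotient $(4/b)/(2a+b/3)$, which for $a=b=L/3$ equals $108/(7L^2)\approx 15.4/L^2<4\pi^2/L^2$. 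The correct hypothesis in the Band--L\'evy spectral-gap theorem is $2$-edge-connectedness (no bridges), and stationary geodesic nets can have bridges --- the stationary eyeglass in the paper's own Figure~\ref{drawing_1} is precisely a dumbbell. With only the universally valid Nicaise--Friedlander bound $\lambda_1\ge(\pi/L)^2$, your (correct and rather elegant) observation that each coordinate $x_j$ restricts to a mean-zero, eigenvalue-$1$ eigenfunction of the weighted Kirchhoff Laplacian yields only $L\ge\pi$. The paper instead obtains $L\ge 2\pi$ from the monotonicity/density estimate for stationary integral $1$-varifolds in the round sphere (Proposition~3.6 of \cite{aiex}): total mass below $2\pi$ would force the density to drop below $1$ somewhere on the support, contradicting integrality.

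The equality-case analysis is also not rigorous as written: ``the geometry of great-circle arcs rules out any genuine vertex'' is an assertion rather than an argument, and the claim that a nonconstant closed geodesic arc must have length $2\pi$ is false as stated. The paper settles the rigidity by pushing the same density machinery further: mass $2\pi<4\pi$ gives density $<2$ everywhere, Corollary~3.3 of \cite{aiex} then permits only triple junctions, and a triple junction has density $3/2$, contradicting the integrality of densities (Theorem~4.13 of \cite{aiex}); hence the realizer is a single equatorial great circle. You would need a substitute for this step even after repairing the spectral lower bound.
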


\begin{proof}
    By Theorem~1 in~\cite{Rademacher}, we have $\omega_1^1(S^n, g_{\operatorname{round}}) \leq 2\pi$. Suppose, for contradiction, that $\omega_1^1(S^n, g_{\operatorname{round}}) < 2\pi$. Then Theorem~\ref{thm:1d-minmax} implies that there exists a stationary geodesic net $G$ on $(S^n, g_{\operatorname{round}})$ with $\operatorname{length}_g(G) < 2\pi$. Proposition~3.6 in~\cite{aiex} implies that the density of $G$ is less than $1$ everywhere, which is a contradiction. Hence $\omega_1^1(S^n, g_{\operatorname{round}}) = 2\pi$.

    Let $G$ be a stationary geodesic net of length $\omega_1^1(S^n, g_{\operatorname{round}})$. Since $\omega_1^1(S^n, g_{\operatorname{round}}) = 2\pi < 4\pi$, Proposition~3.6 in~\cite{aiex} implies that the density of $G$ is less than $2$ everywhere. By Corollary~3.3 in~\cite{aiex}, only triple junctions can occur in $G$. However, triple junctions are also impossible, since the density at such junctions is not an integer, which contradicts Theorem~4.13 in~\cite{aiex}. This implies that the only realizations of $\omega_1^1(S^n, g_{\operatorname{round}})$ are equatorial closed geodesics. Their Morse index is equal to $n - 1$ (see Example~2.5.7 in~\cite{Klingenberg}).
\end{proof}

\printbibliography

\vspace{0.5cm} 
\noindent Department of Mathematics, University of Toronto, Toronto, Canada\\
\textit{E-mail address}: \texttt{mitchell.gaudet@mail.utoronto.ca}

\vspace{0.5cm} 
\noindent Department of Mathematics, University of Toronto, Toronto, Canada\\
\textit{E-mail address}: \texttt{talant.talipov@mail.utoronto.ca}

\end{document}